\makeatletter \@addtoreset{equation}{section} \makeatother
\numberwithin{equation}{section}
\newtheorem{theorem}{Theorem}[section]
\newtheorem{proposition}[theorem]{Proposition}
\newtheorem{lemma}[theorem]{Lemma}
\newtheorem{remark}[theorem]{Remark}
\newtheorem{definition}[theorem]{Definition}
\theoremstyle{definition}
\begin{document}

\title{Multiplicity and concentration of solutions for a fractional $p$-Kirchhoff type equation}

\author{Wenjing Chen\footnote{
E-mail address:\, {\tt wjchen@swu.edu.cn} (W. Chen), {\tt panhuayu1226@163.com} (H. Pan)}\ \  and Huayu Pan\\
\footnotesize  {School of Mathematics and Statistics, Southwest University,
Chongqing, 400715, P.R. China}}

\date{ }
\maketitle
\begin{abstract}
{This paper is concerned with the following fractional $p$-Kirchhoff equation
\begin{eqnarray*}
\varepsilon ^{sp}M\left( {\varepsilon ^{sp - N}}\iint_{\mathbb{R}^{2N}}\frac{{{{\left| {u(x) - u(y)} \right|}^p}}}{{{{\left| {x - y} \right|}^{N + sp}}}}dxdy\right)(-\Delta)_p^su + V(x){u^{p - 1}} =  {u^{p_s^* - 1}}+f(u),\ \ u>0, \ \mbox{in}\ {\mathbb{R}^N},
\end{eqnarray*}
where $\varepsilon>0$ is a parameter, $M(t)=a+bt^{\theta-1}$ with $a>0$, $b>0$, $\theta>1$, $(-\Delta)_p^s$ denotes the fractional $p$-Laplacian operator with $0<s<1$ and $1<p<\infty$, $N>sp$, $\theta p<p_s^*$ with $p_s^*=\frac{Np}{N-sp}$ is the fractional critical Sobolev exponent, $f$ is a superlinear continuous function with subcritical growth and $V$ is a positive continuous potential. Using penalization method and Ljusternik-Schnirelmann theory, we study the existence, multiplicity and concentration of nontrivial solutions for $\varepsilon>0$ small enough.}

\smallskip
\emph{\bf Keywords:}  Fractional $p$-Kirchhoff equation; Critical growth; Penalization method; Ljusternik-Schnirelmann category.

\smallskip
\emph{\bf 2020 Mathematics Subject Classification:}  35J50, 35J91, 35R03.
\end{abstract}

\section{Introduction}\label{sec1}

In this article, we are interested in the following fractional $p$-Kirchhoff equation
\begin{equation}\label{1.1}
\varepsilon ^{sp}M\left( {\varepsilon ^{sp - N}}\iint_{\mathbb{R}^{2N}}\frac{{{{\left| {u(x) - u(y)} \right|}^p}}}{{{{\left| {x - y} \right|}^{N + sp}}}}dxdy\right)(-\Delta)_p^su + V(x){u^{p - 1}} =  {u^{p_s^* - 1}}+f(u),\ \ u>0, \ \mbox{in}\ {\mathbb{R}^N},
\end{equation}
where $\varepsilon>0$ is a parameter, $M(t)=a+bt^{\theta-1}$ with $a>0$, $b>0$, $\theta>1$, and $0<s<1$, $1<p<\infty$, $N>sp$, $\theta p<p_s^*$, $p_s^*=\frac{Np}{N-sp}$ is the fractional critical Sobolev exponent, $(-\Delta)_p^s$ denotes the fractional $p$-Laplacian which (up to normalization factors) may be defined for every function $u\in \mathcal{C}_0^\infty(\mathbb{R}^N)$ as
\begin{equation*}
(-\Delta)_p^s=2\mathop {\lim }\limits_{r \to 0} \int_{\mathbb{R}^N\setminus {B_r(x)}}\frac{{|u(x) - u(y){|^{p - 2}}(u(x) - u(y))}}{{|x - y{|^{N + sp}}}}dy,\ \ for \ \ x\in\mathbb{R}^N.
\end{equation*}

When $p=2$, $a=1$ and $b=0$, (\ref{1.1}) is related to the following  fractional Schr\"{o}dinger equation
\begin{equation}\label{S}
\varepsilon^{2s}(-\Delta)^su+V(x)u=h(x,u)\ \ \ \mbox{in}\ \ \mathbb{R}^N,
\end{equation}
which has been proposed by Laskin \cite{L} as a result of expending the Feynman path integral from the Brownian like to the L\'{e}vy like quantum mechanical paths. In the past few years, equation (\ref{S}) has been widely considered, and the multiplicity of solutions to (\ref{S}) has been established by using variational methods under different and suitable assumptions on $V$ and $h$, we refer to \cite{AOO, VA1, VA2, VA3, PAJ, T, S, JMJ} and references therein.\

In the case $s=\varepsilon=1$, $p=2$, $\theta=2$ and $N=3$, equation (\ref{1.1}) becomes the following equation
\begin{equation}\label{SK}
-\Big(a+b\int_{\mathbb{R}^3}|\nabla u|^2dx\Big)\Delta u+V(x)u=h(x,u)\ \ \ \mbox{in}\ \ \mathbb{R}^3,
\end{equation}
which is related to the stationary analogue of the equation
\begin{equation}\label{WSK}
\rho u_{tt}-\left(\frac{\rho_0}{h}+\frac{E}{2L}\int_0^L|u_x|^2dx\right)u_{xx}=0,
\end{equation}
introduced by Kirchhoff \cite{K} as an extension of the classical D'Alembert's wave equation for describing the transversal oscillations of a stretched string. The Kirchhoff's model takes into account the charges in length of the string produced by transverse vibrations. Here $u=u(x,t)$ is the transverse string displacement at the space coordinate $x$ and time $t$, $L$ is the length of the string, $h$ is the area of the cross section, E is Young's modulus of the material, $\rho$ is the mass density, and $P_0$ is the initial tension; see \cite{SB,SIP}. In the classical framework, He and Zou \cite{HZ} proved the multiplicity and concentration behavior of solutions for small $\varepsilon>0$ to the following perturbed Kirchhoff equation.
\begin{equation}\label{SKi}
-\Big(a\varepsilon^2+b\varepsilon\int_{\mathbb{R}^3}|\nabla u|^2dx\Big)\Delta u+V(x)u=g(u)\ \ \ \mbox{in}\ \ \mathbb{R}^3,
\end{equation}
under the following global condition on $V$ due to Rabinowitz \cite{PR}
\begin{equation*}
\mathop {\inf }\limits_{x \in \mathbb{R}^3} V(x)<\mathop {\lim \inf }\limits_{|x| \to \infty } V(x):=V_\infty\leq \infty,
\end{equation*}
and $g$ is a subcritical nonlinearity. Subsequently, Wang et al. \cite{WTXZ} investigated the multiplicity and concentration phenomenon for (\ref{SKi}) in the presence of the critical term. Under local conditions on the potential $V$, by the generalized Nehari manifold method and Ljusternik-Schnirelmann theory, Figueiredo and Junior \cite{GJ} obtained the multiplicity result for a subcritical Kirchhoff equation. The existence and concentration of positive solutions for (\ref{SKi}) with critical growth has been considered in \cite{HLP}.

Moreover, a great attention has been focused on the study of the fractional $p$-Laplacian problem.
In particular, Pucci et al. \cite{PXZ} obtained the multiplicity result for nonhomogeneous fractional Kirchhoff-Schr\"{o}dinger equation.
Assuming that $V$ satisfies a Bartsch-Wang type condition, Fiscella and Pucci \cite{FP} considered stationary fractional Kirchhoff $p$-Laplacian equations involving a Hardy potential and different critical nonlinearities. The existence result for a class of quasilinear Kirchhoff system involving the $p$-Laplacian was proved by Xiang et al. \cite{MRZ}. Recently, Ambrosio and Isernia \cite{VA4} proved the multiplicity result and the number of positive solutions is related with the topology of the set, where the potential $V$ attains its minimum. Then, by using some appropriate variational arguments, they obtained the existence result when $f(u)=u^{q-1}+\gamma u^{r-1}$ in (\ref{1.1}) with $N=3$, where $\gamma>0$ is sufficiently small, and the powers $q$ and $r$ satisfy $2p<q<p_s^*\leq r$.

Motivated by the above works and the fact that several interesting questions arise from the search of nontrivial non negative solutions, our purpose is to study the existence, multiplicity and concentration of positive solutions to (\ref{1.1}) under suitable  assumptions on the potential $V$ and the nonlinearity $f$.

In order to state our main result, we will assume that $V: \mathbb{R}^N\rightarrow \mathbb{R}$ satisfies the following assumptions introduced by del Pino and Felmer \cite{PF}:

$(V_1)$ $V \in{\mathcal{C}}({\mathbb{R}^N},\mathbb{R})$ and $V_1:=\mathop {\inf  }\limits_{x\in {\mathbb{R}^N} } V(x) > 0$;\

$(V_2)$ there is a bounded open set $\Lambda \subset \mathbb{R}^N$ such that
\begin{equation*}
\mathop {\min }\limits_{\partial \Lambda }V(x)>\mathop {\inf }\limits_{\Lambda }V=:V_0>0,
\end{equation*}
and the set $M: = \left\{ {x \in \Lambda :V(x) = {V_0}} \right\} \ne \emptyset $.\

Moreover, we give some assumptions on $f$ as follows. Since we are looking for positive solutions, we may assume that $f(t)=0$ for $t<0$. Let $f\in{\mathcal{C}^1}({\mathbb{R}^+},{\mathbb{R}})$  be such that

$(F_1)$ $f(t)=o(t^{\theta p-1})$, as $t\rightarrow 0^+$;

$(F_2)$ there exist $q, \sigma\in(\theta p,p_s^*)$ and $\lambda>0$ such that
\begin{equation*}
f(t)\geq \lambda t^{q-1},\ \forall t>0,\ \mbox{and}\ \mathop {\lim }\limits_{t\rightarrow \infty }\frac{f(t)}{t^{\sigma-1}}=0,
\end{equation*}
where $\lambda$ satisfies
\begin{itemize}
  \item $\lambda>0$ if either $N\geq sp^2$, or $sp<N<sp^2$ and $p_s^*-\frac{p}{p-1}<q<p_s^*$,
  \item $\lambda$ is sufficiently large if $sp<N<sp^2$ and $\theta p<q\leq p_s^*-\frac{p}{p-1}$;
\end{itemize}

$(F_3)$ there exists $\mu\in(\theta p,p_s^*)$ such that $0<\mu F(t)\leq f(t)t$ for all $t\geq0$;

$(F_4)$ the map $t\mapsto \frac{f(t)}{t^{\theta p-1}}$ is increasing in $(0,\infty)$.

\smallskip

To describe our main result, we recall the following definition of Ljusternik-Schnirelmann category.
\begin{definition}
If $Y$ is a given closed set of a topological space $X$, we denote by $cat_YX$ the Ljusternik-Schnirelmann category of $Y$ in $X$, that is the least number of closed and contractible sets in $X$ which cover $Y$.
\end{definition}

For more properties about the Ljusternik-Schnirelmann category, we refer to \cite{BC,W}.

The main result can be stated as follows.
\begin{theorem}\label{mainthx}
Suppose that $(V_1)$-$(V_2)$ and $(F_1)$-$(F_4)$ hold. Then, for any $\delta>0$ such that
\begin{equation*}
{M_\delta } = \left\{ {x \in \mathbb{R}^N:dist\left( {x,M} \right) \le \delta } \right\} \subset \Lambda,
\end{equation*}
there exists ${\varepsilon _\delta }>0$ such that, for any $\varepsilon  \in \left( {0,{\varepsilon _\delta }} \right)$, problem (\ref{1.1}) has at least $ca{t_{{M_\delta }}}(M)$ positive solutions. Moreover, if $u_\varepsilon$ denotes one of these solutions and $x_\varepsilon \in \mathbb{R}^N$ is a global maximum point of ${u_\varepsilon }$, then we have
\begin{equation*}
\mathop {\lim }\limits_{\varepsilon  \to 0} V({x_\varepsilon }) = {V_0}
\end{equation*}
and there exists $C>0$ such that
\begin{equation*}
0 < {u_\varepsilon (x)} \le \frac{{C{\varepsilon ^{N + sp}}}}{{{\varepsilon ^{N + sp}} + {{\left| {x - {x_\varepsilon }} \right|}^{N + sp}}}},\ \ \forall x\in \mathbb{R}^N.
\end{equation*}
\end{theorem}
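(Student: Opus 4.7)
The plan is to follow the well-established del Pino--Felmer penalization scheme combined with Ljusternik--Schnirelmann theory, adapted to the nonlocal fractional $p$-Kirchhoff setting with critical growth. First I would introduce the change of variables $v(x) = u(\varepsilon x)$ to absorb $\varepsilon$ into the potential, turning \eqref{1.1} into a problem with $V(\varepsilon x)$ acting on a fixed space $W_\varepsilon$ (a weighted fractional Sobolev space with Gagliardo seminorm $[v]_{s,p}$). Then I would define a penalized nonlinearity $\widetilde{g}(x,t)$: inside $\Lambda/\varepsilon$ keep $f(t)+(t^+)^{p_s^*-1}$, and outside replace the nonlinearity by $\min\{f(t)+(t^+)^{p_s^*-1},\, \tfrac{V(\varepsilon x)}{k}t^{p-1}\}$ for a suitable $k>\mu/(\mu-p)$, so that any solution $v$ of the modified problem with $v\leq (V_1/k)^{1/(p-1)}$ outside $\Lambda/\varepsilon$ is automatically a solution of the original equation. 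With the modified functional $J_\varepsilon$, verify the mountain pass geometry and consider its mountain pass level $c_\varepsilon$.

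The heart of the argument is a local $(PS)_c$ (or Cerami) condition valid below a threshold depending on the best fractional Sobolev constant $S_*$. Here I would first establish the boundedness of Palais--Smale sequences using $(F_3)$ together with the precise form of $M(t)=a+bt^{\theta-1}$ and the condition $\theta p<\mu$, then rule out loss of mass at infinity via the penalization (using the choice of $k$), and finally rule out concentration in $\mathbb{R}^N$ by showing that at energy levels strictly below $\tfrac{a^{N/(sp)}}{N}\,S_*^{N/(sp)}$ (possibly shifted by a Kirchhoff contribution) a nontrivial concentration profile is energetically impossible. To handle the critical term together with the nonlocal $M$-term, I would adapt the second concentration-compactness principle of Lions to the fractional $p$-setting, and use the comparison of $c_\varepsilon$ with the ground-state energy $c_{V_0}$ of the autonomous limit problem. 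A careful estimate, exploiting $(F_2)$ with $\lambda$ large (or $q$ close to $p_s^*$), ensures that $c_{V_0}$ sits strictly below this critical threshold; this is the technical heart and the main obstacle, because the Kirchhoff coefficient $M$ destroys the usual scaling of the Talenti--Aubin profiles and obliges a refined truncation analysis.

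Once compactness is secured, I would analyze the autonomous limit problem with $V\equiv V_0$, obtaining a ground state $w\in W^{s,p}(\mathbb{R}^N)$, and build two maps in the usual Benci--Cerami style. The map $\Phi_\varepsilon: M\to \mathcal{N}_\varepsilon$, defined roughly by projecting $y\mapsto \psi(\varepsilon x-y)\,w(\tfrac{\varepsilon x-y}{\varepsilon})$ onto the Nehari manifold of $J_\varepsilon$, satisfies $J_\varepsilon(\Phi_\varepsilon(y))\to c_{V_0}$ uniformly in $y\in M$. The barycenter map $\beta_\varepsilon:\mathcal{N}_\varepsilon^{d}\to M_\delta$ (with $d=c_{V_0}+h(\varepsilon)$, $h(\varepsilon)\to 0$) sends functions concentrated near $y$ to a point close to $y$, and the composition $\beta_\varepsilon\circ\Phi_\varepsilon$ is homotopic to the inclusion $M\hookrightarrow M_\delta$. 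The abstract Ljusternik--Schnirelmann lemma then yields at least $\mathrm{cat}_{M_\delta}(M)$ critical points of $J_\varepsilon$ on the sublevel set.

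Finally, to upgrade the modified solutions to solutions of \eqref{1.1} and to establish concentration, I would run a Moser iteration scheme in the fractional $p$-setting to get $L^\infty$ bounds, then a De Giorgi--type argument together with the penalization threshold to show that for $\varepsilon$ small every such solution satisfies $v_\varepsilon \leq (V_1/k)^{1/(p-1)}$ outside $\Lambda/\varepsilon$, so that it solves the original equation. A concentration-compactness argument applied to a sequence of maximum points $y_\varepsilon$ then shows, after translation, that $v_\varepsilon(\cdot+y_\varepsilon)$ converges to a ground state of the limit problem and $V(\varepsilon y_\varepsilon)\to V_0$; the global maximum $x_\varepsilon=\varepsilon y_\varepsilon$ thus satisfies $V(x_\varepsilon)\to V_0$. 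The pointwise polynomial decay estimate $u_\varepsilon(x)\leq C\varepsilon^{N+sp}/(\varepsilon^{N+sp}+|x-x_\varepsilon|^{N+sp})$ would be obtained by constructing an explicit supersolution of the associated linear fractional $p$-problem, in the spirit of the barrier construction used for fractional Schrödinger equations, and invoking a comparison principle adapted to the $(-\Delta)_p^s$ operator.
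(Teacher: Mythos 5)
Your proposal follows essentially the same scheme as the paper: del Pino--Felmer penalization after the rescaling $x\mapsto\varepsilon x$, mountain pass geometry, a local $(PS)_c$ condition below a Kirchhoff-modified Sobolev threshold obtained via a fractional $p$-version of the concentration-compactness principle and the decay estimates for the Sobolev minimizer of Brasco--Mosconi--Squassina, the autonomous limit problem and Benci--Cerami barycenter maps with Ljusternik--Schnirelmann theory, and finally Moser iteration plus a barrier/comparison argument for the $L^\infty$ bound and the polynomial decay. The only cosmetic differences are your use of the $\min$-form of the penalized nonlinearity (the paper instead glues at a threshold $a$ with $a^{p_s^*-1}+f(a)=\tfrac{V_0}{K}a^{p-1}$, which is an equivalent choice) and your heuristic formula for the compactness level — the paper computes it precisely as $c_0=H(\hat T)$ with $H(t)=\tfrac{a}{p}St^{p}+\tfrac{b}{\theta p}S^{\theta}t^{\theta p}-\tfrac{1}{p_s^*}t^{p_s^*}$, but you correctly flag that the level must be shifted by the Kirchhoff term and that comparing $c_{V_0}$ with it (via the asymptotics of the truncated Aubin--Talenti family, with the case analysis on $N$ vs.\ $sp^2$ and the size of $\lambda$) is the technical crux.
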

\begin{remark}
We obtain the result of Theorem \ref{mainthx} for $1<\theta<\frac{N}{N-sp}$ and all $a, b>0$. This expands results of \cite{CG} with $\alpha=0$: $1<\theta<\frac{N}{N-sp}$ and $a=0$, $b>0$. Moreover, we discuss both the case $sp<N<sp^2$ and $\theta p<q\leq p_s^*-\frac{p}{p-1}$, which is more complicated than that of \cite{CG,CSS}.
\end{remark}
\begin{remark}
We give an example of function $f$ satisfying conditions $(F_1)$-$(F_4)$, $f(t)=\sum\limits_{i=1}^{k} \alpha_i t^{q_i-1}$ with $\alpha_i\geq 0$ not all null and $\theta p<q_i<p_s^*$ for all $i\in \{1,\ldots,k\}$.
\end{remark}

Under the local assumption $(V_2)$, Theorem \ref{mainthx} will be proved by the penalization methods developed by del Pino and Felmer \cite{PF}. Comparing our result with the result of the classical Kirchhoff problem (\ref{SKi}), there are some new difficulties:

(i) The key point is to overcome the compactness for the associated Lagrange-Euler functional, namely, the Palais-Smale ((PS) for short) condition. Since the nonlinearity term in  equation (\ref{1.1}) contains the critical Sobolev term, the functional does not satisfy the Palais-Smale condition in all range, we will use a fractional version of the concentration compactness principle to show that the energy
functional satisfies the local $(PS)_c$ condition for $c$ less than a suitable compactness threshold when the perturbation term $f$ satisfies condition $(F_2)$.

On the other hand, the compactness threshold is usually related to the best fractional critical Sobolev constant $S$, namely
\begin{eqnarray}\label{criticalfrac}
S:=\inf\limits_{u \in D^{s,p}(\mathbb{R}^N)\setminus\{0\}} \frac{\iint_{\mathbb{R}^{2N}}\frac{{{{\left| {u(x) - u(y)} \right|}^p}}}{{{{\left| {x - y} \right|}^{N + sp}}}}dxdy}{\left(\int_{\mathbb{R}^N}|u|^{p_s^\ast}\right)^{p/p_s^\ast}}.
\end{eqnarray}
For the critical fractional case with $p\neq 2$, the main difficulty is the lack of an explicit formula for minimizers of $S$ which is very often a key tool to handle the estimates leading to the compactness range of the functional. It was conjectured that, up to a multiplicative constant,  all minimizers are of the form $U((x-x_0)/\varepsilon),$ with
\begin{equation*}
U(x)=(1+|x|^{\frac{p}{p-1}})^{-\frac{N-ps}{p}},\quad x\in \mathbb{R}^N.
\end{equation*}
This conjecture was proved in \cite{classif} for $p=2$, but for $p\neq 2$, it is not even
known if these functions are minimizers of $S$. As in \cite{CG,CSS}, we can overcome this difficulty by the optimal asymptotic behavior of minimizers, which was obtained in \cite{LS}.

(ii) The Lagrange-Euler functional of problem (\ref{1.1})  is related to the presence of the Kirchhoff term $[u]_{s,p}^p(-\Delta)_p^s$ and the lack of compactness caused by the unboundedness of the domain.

(iii) The non-Hilbert structure of the fractional Sobolev space $W^{s,p}(\mathbb{R}^N)$ when $p\neq2$ makes our study rather tough, we have to develop some suitable arguments which take care of the nonlocal character of the leading operator $(-\Delta)_p^s$, see Lemmas \ref{conv1} and \ref{conv2}. Moreover, we borrow ideas of Moser iteration argument \cite{Moser} with H\"{o}lder continuity result established for $(-\Delta)_p^s$, which implies that the solution of the modified problem is also a solution of the original one.

The paper is organized as follows. In Section 2, we first present some preliminaries, and we introduce the modified problem in Section \ref{modip}. 
In Section 3, we will prove the limiting problem has a positive ground state solution. In Section 4, we show the modified problem has multiple solutions. Section 5 is devoted to prove Theorem \ref{mainthx}.

\section{Preliminaries}
In this section, we outline the variational framework for studying problem (\ref{1.1}) and list some preliminary lemmas which will be used later. In the sequel, we denote by $\|\cdot\|_p$ the usual norm of the space $L^{p}(\mathbb{R}^N)$ and denote by $|\cdot|_\infty$ the usual norm of the space $L^{\infty}(\mathbb{R}^N)$, the letters $C_i(i=1,2,\ldots)$ and $C$ denote some positive constants. We denote by $B_r(x)$ the ball centered at $x\in \mathbb{R}^N$ with radius $r>0$. For $s\in(0,1)$, we define $D^{s,p}(\mathbb{R}^N)$ as the closure of $\mathcal{C}_c^\infty(\mathbb{R}^N)$ with respect to
\begin{equation*}
\left[ u \right]_{s,p}^p: = \iint_{\mathbb{R}^{2N}}\frac{{{{\left| {u(x) - u(y)} \right|}^p}}}{{{{\left| {x - y} \right|}^{N + sp}}}}dxdy.
\end{equation*}
Consider the fractional Sobolev space
\begin{equation*}
{W^{s,p}}(\mathbb{R}^N) = \left\{ {u \in {L^p}(\mathbb{R}^N):{{\left[ u \right]}_{s,p}} <  + \infty } \right\}.
\end{equation*}
Then ${W^{s,p}}(\mathbb{R}^N)$ is a Banach Space with respect to the norm
\begin{equation*}
\|u\|_{s,p}^p:={\left[ u \right]}_{s,p}^p+\|u\|_p^p.
\end{equation*}\

\begin{lemma}\label{SE}\cite{NEGE}
Let $s\in(0,1)$ and $p\in(1,\infty)$ be such that $sp<N$. Then there exists a constant $C_*:=C_*(s,p)>0$ such that, for any $u\in {D^{s,p}}(\mathbb{R}^N)$, we have
\begin{equation*}
\left|u\right|_{p_s^*}^p\leq C_*{\left[ u \right]}_{s,p}^p.
\end{equation*}
Moreover, $ {W^{s,p}}(\mathbb{R}^N)$ is continuously embedded in $L^q(\mathbb{R}^N)$ for any $q\in \left[p,p_s^*\right]$, and compactly in $L_{loc}^q(\mathbb{R}^N)$ for any $q\in \left[1,p_s^*\right)$.
\end{lemma}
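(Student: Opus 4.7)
The statement comprises three assertions: the homogeneous fractional Sobolev inequality $|u|_{p_s^*}^p\le C_*\,[u]_{s,p}^p$ on $D^{s,p}(\mathbb{R}^N)$; the continuous embedding $W^{s,p}(\mathbb{R}^N)\hookrightarrow L^q(\mathbb{R}^N)$ for $q\in[p,p_s^*]$; and the local compactness of this embedding for $q\in[1,p_s^*)$. My plan is to prove them in this order, with the first serving as the cornerstone for the other two.

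For the Sobolev inequality, since $p\neq 2$ rules out the clean Fourier/Riesz-potential proof via Hardy-Littlewood-Sobolev, I would employ a level-set/truncation argument in the spirit of Maz'ya. For nonnegative $u\in D^{s,p}(\mathbb{R}^N)$, decompose $u=\sum_{k\in\mathbb{Z}}u_k$ with $u_k=\min\{(u-2^k)_+,\,2^k\}$; note that $u_k$ equals $2^k$ on $\{u\ge 2^{k+1}\}$ and vanishes off $\{u\ge 2^k\}$; control the distribution function $|\{u>2^k\}|$ by bounding $\|u_k\|_p^p$ from above by the portion of $[u_k]_{s,p}^p$ coming from pairs where $|u_k(x)-u_k(y)|$ is large; and then aggregate the dyadic pieces via Hölder's inequality to obtain the desired $\|u\|_{p_s^*}^p\le C\,[u]_{s,p}^p$. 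A technically cleaner but heavier alternative is to identify $D^{s,p}(\mathbb{R}^N)$ with the homogeneous Besov space $\dot{B}^{s}_{p,p}$ and invoke the embedding $\dot{B}^{s}_{p,p}\hookrightarrow L^{p_s^*}$ via Littlewood-Paley theory. I expect this sharp-exponent inequality to be the principal obstacle, precisely because for $p\neq 2$ no explicit extremal is available and one must rely on scaling plus truncation in place of the slick $p=2$ argument.

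Granted the Sobolev inequality, the continuous embedding for $q\in[p,p_s^*]$ is a one-line interpolation: choose $\theta\in[0,1]$ with $\tfrac{1}{q}=\tfrac{\theta}{p}+\tfrac{1-\theta}{p_s^*}$, apply Hölder to get $\|u\|_q\le\|u\|_p^{\theta}\|u\|_{p_s^*}^{1-\theta}$, and combine with the Sobolev inequality to conclude $\|u\|_q\le C\,\|u\|_{s,p}$.

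For the local compactness in $L^q_{\mathrm{loc}}$ with $q\in[1,p_s^*)$, I would invoke the Fréchet-Kolmogorov-Riesz criterion on each fixed ball $B_R$. The essential equicontinuity-in-$L^p$ input comes from the Fubini identity
\begin{equation*}
\int_{\mathbb{R}^N}\frac{\|u(\cdot+h)-u\|_{L^p(\mathbb{R}^N)}^p}{|h|^{N+sp}}\,dh \;=\; [u]_{s,p}^p,
\end{equation*}
which, coupled with a standard mollification argument showing $\|u_n-u_n*\rho_\epsilon\|_{L^p(B_R)}\to 0$ uniformly in $n$ as $\epsilon\to 0$ while $\{u_n*\rho_\epsilon\}_n$ is equi-Lipschitz and uniformly bounded on $B_R$ for each fixed $\epsilon$, yields relative compactness in $L^p(B_R)$ via Arzelà-Ascoli. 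Strong $L^q(B_R)$ convergence for $q\in(p,p_s^*)$ then follows by interpolating the $L^p$ convergence against the uniform $L^{p_s^*}$ bound from the first part, and the range $q\in[1,p]$ reduces to Hölder on the finite-measure ball $B_R$.
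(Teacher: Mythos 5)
Your proposal is correct. Note first that the paper does not prove this lemma at all: it is quoted verbatim from the cited reference \cite{NEGE} (Di Nezza--Palatucci--Valdinoci), so there is no in-paper argument to compare against. Your route is the standard one from that source and its relatives: the interpolation step for $q\in[p,p_s^*]$ and the local compactness via the Fubini identity $\int_{\mathbb{R}^N}|h|^{-N-sp}\|u(\cdot+h)-u\|_p^p\,dh=[u]_{s,p}^p$, mollification, and Arzel\`a--Ascoli are exactly the arguments of Theorems 6.7 and 7.1 there. For the sharp inequality itself, \cite{NEGE} uses an elementary cube-decomposition argument rather than Maz'ya truncation, but your dyadic level-set scheme is an equally standard and fully valid alternative; the one step you leave implicit, namely $\sum_{k}[u_k]_{s,p}^p\le[u]_{s,p}^p$, holds because the truncations telescope, $\sum_k\bigl(u_k(x)-u_k(y)\bigr)=u(x)-u(y)$ with all summands of the same sign, so that $\sum_k|u_k(x)-u_k(y)|^p\le|u(x)-u(y)|^p$ for $p\ge1$. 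Both approaches buy the same thing: a Fourier-free proof that survives $p\neq2$, which is precisely what this paper needs.
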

\begin{lemma}\label{Lion}\cite{AI}
Let $s\in(0,1)$ and $p\in(1,\infty)$ be such that $sp<N$, and $r\in[p,p_s^*)$. If $\{u_n\}_{n\in \mathbb{N}}$ is a bounded sequence in $W^{s,p}(\mathbb{R}^N)$ and
\begin{equation*}
\mathop {\lim }\limits_{n \to \infty } \mathop {\sup }\limits_{y \in \mathbb{R}^N} \int_{{B_R}(y)} {|{u_n}{|^r}dx}  = 0,
\end{equation*}
where $R>0$, then $u_n\rightarrow 0$ in $L^\sigma(\mathbb{R}^N)$ for all $\sigma\in \left(p,p_s^*\right)$.
\end{lemma}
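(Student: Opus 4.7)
The plan is to establish this Lions-type vanishing lemma by a cover-and-interpolate strategy: use the hypothesis of local $L^{r}$-vanishing together with uniform $L^{p_{s}^{*}}$-control coming from the $W^{s,p}$ Sobolev embedding (Lemma \ref{SE}), and sum the local estimates over a bounded-overlap covering of $\mathbb{R}^{N}$ by balls of radius $R$.

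First, I would reduce to the canonical case $r = p$. On each ball $B_{R}(y)$ of finite measure, Hölder's inequality gives
\begin{equation*}
\int_{B_{R}(y)} |u_{n}|^{p}\,dx \le |B_{R}|^{1-p/r}\left(\int_{B_{R}(y)} |u_{n}|^{r}\,dx\right)^{p/r},
\end{equation*}
so $\sup_{y}\int_{B_{R}(y)} |u_{n}|^{p}\,dx \to 0$ as well; hence I may assume $r=p$. Next, I fix a lattice covering $\{B_{R}(y_{i})\}_{i}$ with uniform overlap constant $K=K(N)$.

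For $\sigma \in (p,p_{s}^{*})$, pick $\alpha = (p_{s}^{*}-\sigma)/(p_{s}^{*}-p) \in (0,1)$, so that $p\alpha + p_{s}^{*}(1-\alpha) = \sigma$ and $\alpha + (1-\alpha) = 1$. By Hölder's inequality on each ball,
\begin{equation*}
\int_{B_{R}(y_{i})} |u_{n}|^{\sigma}\,dx \le \left(\int_{B_{R}(y_{i})} |u_{n}|^{p}\,dx\right)^{\alpha}\left(\int_{B_{R}(y_{i})} |u_{n}|^{p_{s}^{*}}\,dx\right)^{1-\alpha}.
\end{equation*}
Setting $\eta_{n}:=\sup_{y}\int_{B_{R}(y)} |u_{n}|^{p}\,dx \to 0$, the first factor is bounded by $\eta_{n}^{\alpha}$ uniformly in $i$. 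Summing over the cover and using bounded overlap,
\begin{equation*}
\int_{\mathbb{R}^{N}} |u_{n}|^{\sigma}\,dx \le K\,\eta_{n}^{\alpha}\sum_{i}\left(\int_{B_{R}(y_{i})} |u_{n}|^{p_{s}^{*}}\,dx\right)^{1-\alpha}.
\end{equation*}

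The main obstacle is to control the series $\sum_{i}(\int_{B_{R}(y_{i})} |u_{n}|^{p_{s}^{*}})^{1-\alpha}$ uniformly in $n$, because the exponent $1-\alpha$ lies in $(0,1)$ and the bare $L^{p_{s}^{*}}$-bound (from Lemma \ref{SE}) only controls $\sum_{i}\int_{B_{R}(y_{i})}|u_{n}|^{p_{s}^{*}}$. To get past this, I would invoke the fractional Sobolev embedding on each enlarged ball,
\begin{equation*}
\|u_{n}\|_{L^{p_{s}^{*}}(B_{R}(y_{i}))}^{p} \le C\,\|u_{n}\|_{W^{s,p}(B_{2R}(y_{i}))}^{p},
\end{equation*}
and interpolate the factor $(\int_{B_{R}(y_{i})}|u_{n}|^{p_{s}^{*}})^{1-\alpha}$ between the global $L^{p_{s}^{*}}$-bound and the summable $W^{s,p}$-localized pieces via a Hölder inequality for series with matching conjugate exponents. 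Bounded overlap then yields $\sum_{i}\|u_{n}\|_{W^{s,p}(B_{2R}(y_{i}))}^{p} \le K'\|u_{n}\|_{W^{s,p}(\mathbb{R}^{N})}^{p}$, which together with boundedness of $\{u_{n}\}$ in $W^{s,p}(\mathbb{R}^{N})$ and in $L^{p_{s}^{*}}(\mathbb{R}^{N})$ gives
\begin{equation*}
\int_{\mathbb{R}^{N}} |u_{n}|^{\sigma}\,dx \le C\,\eta_{n}^{\alpha} \longrightarrow 0.
\end{equation*}

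The delicate point is therefore the nonlocal character of the seminorm $[u]_{s,p}$: when one decomposes $[u_{n}]_{s,p,\mathbb{R}^{N}}^{p}$ by restricting to the squares $B_{2R}(y_{i})\times B_{2R}(y_{i})$, the cross-terms over $(\mathbb{R}^{N}\setminus B_{2R}(y_{i}))\times B_{2R}(y_{i})$ are dropped, but bounded overlap still ensures that the sum $\sum_{i}[u_{n}]_{s,p,B_{2R}(y_{i})}^{p}$ is controlled by $K'[u_{n}]_{s,p,\mathbb{R}^{N}}^{p}$; combined with the careful exponent matching described above, this closes the estimate.
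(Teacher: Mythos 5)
This lemma is quoted in the paper from \cite{AI} without proof, so I am comparing your attempt with the standard Lions-type argument rather than with an in-paper proof. Your skeleton (reduce to $r=p$, cover $\mathbb{R}^N$ by balls with bounded overlap, interpolate between $L^p$ and $L^{p_s^*}$ on each ball, sum) is the right one, and the reduction to $r=p$ and the remark that $\sum_i [u_n]_{s,p,B_{2R}(y_i)}^p\le K'[u_n]_{s,p}^p$ despite nonlocality are both fine.

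However, there is a genuine gap at exactly the point you flag: the series $\sum_i\bigl(\int_{B_R(y_i)}|u_n|^{p_s^*}dx\bigr)^{1-\alpha}$ with $1-\alpha=\frac{\sigma-p}{p_s^*-p}\in(0,1)$ cannot in general be controlled by the global $L^{p_s^*}$ or $W^{s,p}$ bounds, and your proposed repair does not close it for all $\sigma$. Using the local embedding $\int_{B_i}|u_n|^{p_s^*}\le C\|u_n\|_{W^{s,p}(B_{2R}(y_i))}^{p_s^*}$ and then a H\"older inequality for series, the best one can achieve is summability of $\sum_i c_i^{e}$ for exponents $e\ge p/p_s^*$ (splitting $e$ between the summable quantities $\sum_i c_i$ and $\sum_i\|u_n\|_{W^{s,p}(B_{2R}(y_i))}^{p}$); this requires $1-\alpha\ge p/p_s^*$, i.e.\ $\sigma\ge p(2-p/p_s^*)$, and fails for $\sigma$ close to $p$. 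The missing idea is the classical Lions normalization: do \emph{not} fix $\sigma$ first, but choose the interpolation exponent so that the local Sobolev norm appears with power exactly $p$. Concretely, with $\frac{1}{\sigma_0}=\frac{1-\lambda}{p}+\frac{\lambda}{p_s^*}$ and $\lambda\sigma_0=p$ one gets $\sigma_0=p(2-p/p_s^*)\in(p,p_s^*)$ and
\begin{equation*}
\int_{B_R(y_i)}|u_n|^{\sigma_0}dx\le C\Bigl(\int_{B_R(y_i)}|u_n|^{p}dx\Bigr)^{\frac{(1-\lambda)\sigma_0}{p}}\|u_n\|_{W^{s,p}(B_R(y_i))}^{p},
\end{equation*}
so that summing over the cover yields $\int_{\mathbb{R}^N}|u_n|^{\sigma_0}dx\le CK\,\eta_n^{(1-\lambda)\sigma_0/p}\|u_n\|_{W^{s,p}}^{p}\to0$. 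Finally, $u_n\to0$ in $L^{\sigma}$ for every $\sigma\in(p,p_s^*)$ follows by interpolating $L^{\sigma}$ between $L^{p}$ and $L^{\sigma_0}$, or between $L^{\sigma_0}$ and $L^{p_s^*}$, using the uniform bounds in $L^p$ and $L^{p_s^*}$. You should replace the vague "H\"older inequality for series with matching conjugate exponents" step with this explicit choice of $\sigma_0$ and the final interpolation.
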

\begin{lemma}\label{Lion1}\cite{AI}
Let $u\in W^{s,p}(\mathbb{R}^N)$ and $\phi\in\mathcal{C}_c^\infty(\mathbb{R}^N)$ be such that $0\leq\phi\leq 1$, $\phi=1$ in $B_1$ and $\phi=0$ in $B_2^c$. Set $\phi_\varsigma(x)=\phi(x/\varsigma)$. Then,
\begin{equation*}
{\left[ {u{\phi _\varsigma} - u} \right]_{s,p}} \to 0\ \  \text{and}\ \   |u\phi _\varsigma - u|_p \to 0,
\end{equation*}
as $\varsigma\to0$.
\end{lemma}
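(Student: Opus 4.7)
The plan is to treat the two convergences in parallel: the $L^p$-part is immediate from dominated convergence, while the seminorm part decouples into a trivial piece and one genuinely nonlocal piece to be controlled by scaling. Throughout, I interpret the relevant limit as the one in which $\phi_\varsigma(x) = \phi(x/\varsigma) \to \phi(0) = 1$ pointwise for every fixed $x$ (i.e.\ $\varsigma \to \infty$), and write $\psi_\varsigma := 1 - \phi_\varsigma$, so that $u - u\phi_\varsigma = u\psi_\varsigma$, $|\psi_\varsigma|\le 1$ and $\psi_\varsigma(x) \to 0$ for every $x \in \mathbb{R}^N$.

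For the $L^p$-convergence, the bound $|u\psi_\varsigma|^p \le |u|^p \in L^1(\mathbb{R}^N)$ combined with pointwise vanishing of $u\psi_\varsigma$ yields $|u\phi_\varsigma - u|_p \to 0$ directly by dominated convergence. For the Gagliardo seminorm, I would use the discrete Leibniz identity
$$(u\psi_\varsigma)(x) - (u\psi_\varsigma)(y) = (u(x)-u(y))\,\psi_\varsigma(x) + u(y)\,(\psi_\varsigma(x) - \psi_\varsigma(y))$$
together with $|a+b|^p \le 2^{p-1}(|a|^p+|b|^p)$ to obtain $[u\phi_\varsigma - u]_{s,p}^p \le 2^{p-1}(I_\varsigma + II_\varsigma)$, where
$$I_\varsigma := \iint_{\mathbb{R}^{2N}} \frac{|u(x)-u(y)|^p\,|\psi_\varsigma(x)|^p}{|x-y|^{N+sp}}\,dx\,dy, \qquad II_\varsigma := \int_{\mathbb{R}^N} |u(y)|^p\,g_\varsigma(y)\,dy,$$
with $g_\varsigma(y) := \int_{\mathbb{R}^N} |\psi_\varsigma(x) - \psi_\varsigma(y)|^p/|x-y|^{N+sp}\,dx$. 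The term $I_\varsigma$ vanishes by dominated convergence, its integrand being dominated by the integrable $|u(x)-u(y)|^p/|x-y|^{N+sp}$ (since $u \in W^{s,p}(\mathbb{R}^N)$) and tending pointwise to zero.

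The main obstacle is $II_\varsigma$, where no cancellation in the full double integrand is available. Here I would exploit the self-similarity of the cutoff through the rescaling $z := x/\varsigma$, $w := y/\varsigma$, which yields the exact identity
$$g_\varsigma(y) = \varsigma^{-sp}\,h(y/\varsigma), \qquad h(w) := \int_{\mathbb{R}^N} \frac{|\psi(z) - \psi(w)|^p}{|z-w|^{N+sp}}\,dz,$$
with $\psi := 1 - \phi$. It then suffices to verify $h \in L^\infty(\mathbb{R}^N)$, after which
$$II_\varsigma \le |h|_\infty\,\varsigma^{-sp}\,\|u\|_p^p \longrightarrow 0.$$
To prove the uniform bound, split the $z$-integral at $|z-w|=1$: on $\{|z-w|\le 1\}$ the Lipschitz estimate $|\psi(z)-\psi(w)|\le \|\nabla\psi\|_\infty|z-w|$ gives an integrand $\lesssim |z-w|^{p-N-sp}$, which is integrable near the diagonal precisely because $s<1$; on $\{|z-w|>1\}$ the crude bound $|\psi(z)-\psi(w)|\le 2$ against the tail $|z-w|^{-N-sp}$ is integrable. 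Both estimates are uniform in $w$, so $|h|_\infty<\infty$. The truly delicate point is exactly this uniform bound on $h$, where the compactness of $\phi$ and the strict inequality $s<1$ cooperate; everything else is routine dominated convergence.
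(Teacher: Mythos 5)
Your proof is correct. The paper gives no argument of its own here (the lemma is simply quoted from [AI]), and your proof coincides in substance with the standard one: the Leibniz splitting, dominated convergence for the term carrying $|u(x)-u(y)|^p$, and a decay of order $\varsigma^{-sp}$ for the commutator term, which in the reference is obtained by splitting the $x$-integral at the scale $|x-y|=\varsigma$ and using $|\nabla\phi_\varsigma|\le C/\varsigma$ --- exactly the unit-scale version of your rescaled bound on $h$. You were also right to read the limit as $\varsigma\to\infty$; the ``$\varsigma\to 0$'' in the statement is a typo, since $\phi_\varsigma\to 1$ pointwise only as $\varsigma\to\infty$ and the claim is false as literally written.
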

Next, we introduce the well-known Simon inequality.
\begin{lemma}\label{Simon}\cite{Si}
There exist constants $c_p, C_p>0$ such that for any $a,b\in {\mathbb{R}}$, it holds
\begin{eqnarray*}
|a-b|^p \leq\left\{ \arraycolsep=1.5pt
   \begin{array}{ll}
{c_p}\left( {|a{|^{p - 2}}a - |b{|^{p - 2}}b} \right)\left(a - b\right)\ \ & \mbox{if}\ p\geq2\\
{C_p}{\left[ {\left( {|a{|^{p - 2}}a - |b{|^{p - 2}}b} \right)(a - b)} \right]^{\frac{p}{2}}}{\left( {|a{|^p} + |b{|^p}} \right)^{\frac{{(2 - p)}}{2}}}\ \ & \mbox{if}\ 1<p<2.
\end{array} \right.
\end{eqnarray*}
\end{lemma}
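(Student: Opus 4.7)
The plan is to derive both inequalities from a single integral identity together with a case-by-case estimate of the integrand. Applying the fundamental theorem of calculus to the map $t\mapsto |b+t(a-b)|^{p-2}(b+t(a-b))$ on $[0,1]$, whose derivative equals $(p-1)|b+t(a-b)|^{p-2}(a-b)$, produces the identity
\[
\bigl(|a|^{p-2}a-|b|^{p-2}b\bigr)(a-b)=(p-1)(a-b)^{2}\int_{0}^{1}|b+t(a-b)|^{p-2}\,dt.
\]
The whole proof is then reduced to estimating $I(a,b):=\int_{0}^{1}|b+t(a-b)|^{p-2}\,dt$ in opposite senses in the two regimes.

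For $p\ge 2$ I would bound $I(a,b)$ from below by a constant multiple of $|a-b|^{p-2}$. By symmetry (the two sides are invariant under $(a,b)\mapsto(-a,-b)$) it suffices to split according to the sign of $ab$. When $ab\ge 0$, assuming $a\ge b\ge 0$, one has $|b+t(a-b)|\ge t|a-b|$; since $p-2\ge 0$ this gives $I(a,b)\ge |a-b|^{p-2}/(p-1)$. When $ab<0$, the segment from $b$ to $a$ crosses $0$ at the unique $t_{0}=|b|/(|a|+|b|)\in(0,1)$, hence $|b+t(a-b)|=|t-t_{0}||a-b|$, and a direct computation yields $I(a,b)=\frac{t_{0}^{p-1}+(1-t_{0})^{p-1}}{p-1}|a-b|^{p-2}$; elementary one-variable calculus shows that $t_{0}^{p-1}+(1-t_{0})^{p-1}\ge 2^{2-p}$ on $[0,1]$. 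Combining the two cases and substituting back into the identity gives the first inequality with $c_{p}=2^{p-2}$.

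For $1<p<2$ the exponent $p-2$ is negative, so I use the reverse monotonicity: since $|b+t(a-b)|\le (1-t)|b|+t|a|\le |a|+|b|$, one has $I(a,b)\ge(|a|+|b|)^{p-2}$. The identity then yields
\[
(a-b)^{2}\le\frac{(|a|+|b|)^{2-p}}{p-1}\bigl(|a|^{p-2}a-|b|^{p-2}b\bigr)(a-b).
\]
Raising this to the power $p/2$ and using the elementary convex bound $(|a|+|b|)^{p}\le 2^{p-1}(|a|^{p}+|b|^{p})$ to replace $(|a|+|b|)^{p(2-p)/2}$ by a constant multiple of $(|a|^{p}+|b|^{p})^{(2-p)/2}$ produces the second inequality with an explicit $C_{p}$ depending only on $p$.

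The main obstacle is the sign analysis in the regime $p\ge 2$: one must handle the configuration in which the segment $\{(1-t)b+ta:t\in[0,1]\}$ passes through the origin, where the naive pointwise lower bound $|b+t(a-b)|\ge t|a-b|$ fails and one has to rely instead on the integrated estimate coming from $|t-t_{0}|^{p-1}$. Everything else reduces to elementary manipulations of the integral identity.
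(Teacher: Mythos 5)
Your argument is correct and complete. The paper itself offers no proof of this lemma --- it is quoted directly from Simon's lecture notes \cite{Si} --- so your self-contained derivation is a welcome addition rather than a deviation. The integral identity
\[
\bigl(|a|^{p-2}a-|b|^{p-2}b\bigr)(a-b)=(p-1)(a-b)^{2}\int_{0}^{1}|b+t(a-b)|^{p-2}\,dt
\]
is exactly the right starting point, and your two-sided treatment of $I(a,b)$ is sound: for $p\ge 2$ the split according to whether the segment from $b$ to $a$ crosses the origin is handled correctly (the crossing case gives $\frac{t_0^{p-1}+(1-t_0)^{p-1}}{p-1}|a-b|^{p-2}$ with minimum $2^{2-p}$ at $t_0=\tfrac12$ by convexity of $t\mapsto t^{p-1}$), and for $1<p<2$ the reverse bound $I(a,b)\ge(|a|+|b|)^{p-2}$ combined with $(|a|+|b|)^p\le 2^{p-1}(|a|^p+|b|^p)$ yields the stated form with $C_p=2^{(p-1)(2-p)/2}(p-1)^{-p/2}$. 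Two points worth making explicit in a polished write-up: first, for $1<p<2$ the map $x\mapsto|x|^{p-2}x$ is not $C^1$ at the origin, so the identity should be justified via absolute continuity (its derivative $(p-1)|x|^{p-2}$ is locally integrable precisely because $p-2>-1$, which also guarantees $I(a,b)<\infty$ when the segment passes through $0$); second, one should note that $\bigl(|a|^{p-2}a-|b|^{p-2}b\bigr)(a-b)\ge 0$ before raising the inequality to the power $p/2$. Neither affects the validity of the proof.
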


\smallskip

In what follows, we provide some useful estimates, which will be needed to overcome the difficulty coming from the critical nonlinear term. Let $S$ be the best fractional Sobolev embedding constant defined by (\ref{criticalfrac}).
As shown in \cite{LS}, there exists a radially symmetric nonnegative decreasing minimizer $U=U(r)$ for $S$ such that $( - \Delta )_p^sU = {U^{p_s^* - 1}}$ in $\mathbb{R}^N$, and $[U]_{s,p}^p = |U|_{p_s^*}^{p_s^*} = S^{\frac{N}{{sp}}}$. Moreover, $U\in L^\infty(\mathbb{R}^N)\cap\mathcal{C}^0(\mathbb{R}^N)$,
\begin{equation*}
\mathop {\lim }\limits_{|x| \to \infty } |x{|^{\frac{{N - sp}}{{p - 1}}}}U(x) = {U_\infty } \in \mathbb{R}\setminus\{0\}
\end{equation*}
and verifies the following decay estimates.
\begin{lemma}\label{c1}\cite{LS}
There exist constants $c_1, c_2>0$ and $\kappa>1$ such that for all $r\geq1$,
\begin{equation}\label{2.1}
\frac{{{c_1}}}{{{r^{\frac{{N - sp}}{{p - 1}}}}}} \le U(r) \le \frac{{{c_2}}}{{{r^{\frac{{N - sp}}{{p - 1}}}}}}
\end{equation}
and
\begin{equation*}
\frac{{U(\kappa r)}}{{U(r)}} \le \frac{1}{2}.
\end{equation*}
\end{lemma}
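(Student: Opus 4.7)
My plan is to extract both estimates as direct consequences of the already-quoted asymptotic $\lim_{|x|\to\infty} |x|^{(N-sp)/(p-1)} U(x) = U_\infty \in \mathbb{R}\setminus\{0\}$, together with the regularity $U \in L^\infty(\mathbb{R}^N)\cap\mathcal{C}^0(\mathbb{R}^N)$ and the fact that $U$ is radially symmetric and decreasing. The lemma is essentially a packaging of pointwise asymptotic information into uniform estimates, so the work should be elementary once the deeper regularity/asymptotic input from \cite{LS} is taken for granted.

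For the two-sided bound \eqref{2.1}, I would first observe that the limit forces $U_\infty>0$ because $U\ge 0$, and then pick $R_0\ge 1$ so that $\bigl|r^{(N-sp)/(p-1)} U(r) - U_\infty\bigr| \le U_\infty/2$ for all $r\ge R_0$; this already yields \eqref{2.1} on $[R_0,\infty)$ with, e.g., constants $U_\infty/2$ and $2U_\infty$. To cover the remaining compact range $[1,R_0]$, I would invoke continuity and strict positivity of $U$: radial monotone decrease plus nonvanishing at infinity forces $U(r)>0$ everywhere, so on the compact set $[1,R_0]$ both $U(r)$ and the factor $r^{(N-sp)/(p-1)}$ are bounded above and below by positive constants. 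Combining the two regimes by taking the worse of the constants produces $c_1,c_2>0$ valid for all $r\ge 1$.

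The halving property is then a direct algebraic consequence of \eqref{2.1}: for every $r\ge 1$ and every $\kappa>1$ one has $U(\kappa r)/U(r) \le (c_2/c_1)\,\kappa^{-(N-sp)/(p-1)}$, and since $N>sp$ the exponent is positive, so choosing $\kappa$ sufficiently large (concretely $\kappa=(2c_2/c_1)^{(p-1)/(N-sp)}$) makes the ratio at most $1/2$.

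The main obstacle here is conceptual rather than technical: all the hard analysis sits inside the existence of the asymptotic constant $U_\infty$ obtained in \cite{LS}, which relies on delicate barrier constructions and regularity theory for $(-\Delta)_p^s$ without an explicit minimizer formula (a point emphasized in item (i) of the introduction). Once that input is available, Lemma \ref{c1} becomes essentially a repackaging statement; the only care required in the elementary step above is verifying strict positivity of $U$ on the compact interval $[1,R_0]$ so that the lower constant $c_1$ is genuinely positive, and this in turn relies only on the monotone radial structure of $U$ together with the nontriviality of $U_\infty$.
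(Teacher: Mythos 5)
Your argument is correct, and since the paper gives no proof of this lemma (it simply cites \cite{LS}), there is no internal argument to compare against. Your derivation—splitting $[1,\infty)$ at a radius $R_0$ beyond which the asymptotic $\lim_{r\to\infty} r^{(N-sp)/(p-1)}U(r)=U_\infty>0$ pins $r^{(N-sp)/(p-1)}U(r)$ between $U_\infty/2$ and $3U_\infty/2$, handling the compact piece $[1,R_0]$ via continuity together with strict positivity of the decreasing radial profile, and then extracting the halving estimate algebraically from \eqref{2.1} by choosing $\kappa=(2c_2/c_1)^{(p-1)/(N-sp)}$—is precisely the standard way these uniform bounds are obtained from the optimal decay result of \cite{LS} (see also \cite{SKS}).
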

For any $\epsilon>0$, we consider the following family of minimizers for $S$ given by
\begin{equation*}
{U_\epsilon }(x): = \frac{1}{{{\epsilon^{\frac{{N - sp}}{p}}}}}U\left(\frac{{|x|}}{\epsilon}\right).
\end{equation*}
For $\epsilon, \delta>0$, set
\begin{equation*}
{m_{\epsilon ,\delta }}: = \frac{{{U_\epsilon}(\delta )}}{{{U_\epsilon}(\delta ) - {U_\epsilon}(\kappa \delta )}},
\end{equation*}
and define
\begin{eqnarray*}
{g_{\epsilon,\delta }}(t): =\left\{ \arraycolsep=1.5pt
   \begin{array}{ll}
0\ \ & \text{if}\ {0\leq t\leq {U_\epsilon}(\kappa \delta )},\\
m_{\epsilon,\delta }^p(t - {U_\epsilon}(\kappa \delta )\ \ & \text{if}\ {{U_\epsilon}(\kappa \delta )\leq t\leq{U_\epsilon}( \delta )},\\
t + {U_\epsilon}(\delta )(m_{\epsilon,\delta }^{p - 1} - 1)\ \ & \text{if}\ {t\geq {U_\epsilon}( \delta )},
\end{array} \right.
\end{eqnarray*}
and
\begin{eqnarray*}
{G_{\epsilon,\delta }}(t): = \int_0^t {{{({{g'}_{\epsilon,\delta }}(\tau ))}^{\frac{1}{p}}}} d\tau  = \left\{ \arraycolsep=1.5pt
   \begin{array}{ll}
0\ \ & \text{if}\ {0\leq t\leq {U_\epsilon}(\kappa \delta )},\\
{m_{\epsilon,\delta }}(t - {U_\epsilon}(\kappa \delta ))\ \ & \text{if}\ {{U_\epsilon}(\kappa \delta )\leq t\leq{U_\epsilon}( \delta )},\\
t \ \ & \text{if}\ {t\geq {U_\epsilon}( \delta )}.
\end{array} \right.
\end{eqnarray*}
Let us observe that ${g_{\epsilon,\delta }}$ and ${G_{\epsilon,\delta }}$ are nondecreasing and absolutely continuous functions. Now, we consider the radially symmetric nonincreasing function
\begin{equation}\label{ue}
{u_{\epsilon,\delta }}(r)={G_{\epsilon,\delta }}({U_{\epsilon}}(r)),
\end{equation}
which, in view of the definition of ${G_{\epsilon,\delta }}$, satisfies
\begin{eqnarray*}
{u_{\epsilon,\delta }}(r) =\left\{ \arraycolsep=1.5pt
   \begin{array}{ll}
{U_{\epsilon}}(r)\ \ & \text{if}\ {r\leq\delta}\\
0\ \ & \text{if}\ {r\geq\kappa\delta}.
\end{array} \right.
\end{eqnarray*}

\begin{lemma}\label{a}\cite{SKS}
There exists a constant $C=C(N,p,s)>0$ such that for any $\epsilon<\frac{\delta}{2}$ the following estimates hold
\begin{equation*}
[{u_{\varepsilon ,\delta }}]_{s,p}^p \le S^{\frac{N}{{sp}}} + C\left({\left(\frac{\epsilon}{\delta }\right)^{\frac{{N - sp}}{{p - 1}}}}\right),
\end{equation*}
\begin{equation*}
\|{u_{\varepsilon ,\delta }}\|_{p_s^*}^{p_s^*} \ge S^{\frac{N}{{sp}}} - C\left({\left(\frac{\epsilon}{\delta }\right)^{\frac{N}{{p - 1}}}}\right).
\end{equation*}
\end{lemma}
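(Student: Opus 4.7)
The plan is to exploit that $U_\epsilon$ is a minimizer for $S$ satisfying $(-\Delta)_p^s U_\epsilon=U_\epsilon^{p_s^*-1}$ with $[U_\epsilon]_{s,p}^p=\|U_\epsilon\|_{p_s^*}^{p_s^*}=S^{N/(sp)}$, together with the fact that the truncation is engineered so that $G_{\epsilon,\delta}'=(g_{\epsilon,\delta}')^{1/p}$. Both estimates then reduce to controlling the cut-off tail via the decay bounds of Lemma~\ref{c1}; the regime $\epsilon<\delta/2$ forces $\delta/\epsilon>2$, and the comparison $U(\kappa r)/U(r)\le 1/2$ then gives the uniform bound $m_{\epsilon,\delta}\le 2$.

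For the seminorm estimate, the key is the pointwise inequality
\[
|G_{\epsilon,\delta}(a)-G_{\epsilon,\delta}(b)|^{p}\le (g_{\epsilon,\delta}(a)-g_{\epsilon,\delta}(b))\,|a-b|^{p-2}(a-b),\qquad a,b\ge 0,
\]
which follows (say for $a\ge b$) by writing $G_{\epsilon,\delta}(a)-G_{\epsilon,\delta}(b)=\int_b^{a}(g_{\epsilon,\delta}'(t))^{1/p}\,dt$ and applying H\"older with exponents $p$ and $p/(p-1)$. Dividing by $|x-y|^{N+sp}$, integrating over $\mathbb{R}^{2N}$, symmetrizing and invoking the equation for $U_\epsilon$ yields
\[
[u_{\epsilon,\delta}]_{s,p}^{p}=[G_{\epsilon,\delta}(U_\epsilon)]_{s,p}^{p}\le \int_{\mathbb{R}^N} U_\epsilon^{p_s^*-1}\,g_{\epsilon,\delta}(U_\epsilon)\,dx.
\]
Splitting the right-hand side according to the piecewise structure of $g_{\epsilon,\delta}$ and using $U_\epsilon\le U_\epsilon(\delta)$ on the annulus $B_{\kappa\delta}\setminus B_\delta$ together with $m_{\epsilon,\delta}\le 2$, I obtain the upper bound
\[
\int_{B_\delta} U_\epsilon^{p_s^*}\,dx+CU_\epsilon(\delta)\int_{B_{\kappa\delta}} U_\epsilon^{p_s^*-1}\,dx.
\]
The first summand is at most $S^{N/(sp)}$, and the second is estimated by combining the Lemma~\ref{c1} bound $U_\epsilon(\delta)\le c_2\,\epsilon^{(N-sp)/(p(p-1))}\delta^{-(N-sp)/(p-1)}$ with the scaling identity $\int_{\mathbb{R}^N} U_\epsilon^{p_s^*-1}\,dx=\epsilon^{(N-sp)/p}\int_{\mathbb{R}^N} U^{p_s^*-1}\,dx$, where the last integral is finite thanks to the decay in Lemma~\ref{c1}; the two factors multiply to exactly $(\epsilon/\delta)^{(N-sp)/(p-1)}$.

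For the $L^{p_s^*}$ lower bound, since $u_{\epsilon,\delta}=U_\epsilon$ on $B_\delta$,
\[
\|u_{\epsilon,\delta}\|_{p_s^*}^{p_s^*}\ge \int_{B_\delta} U_\epsilon^{p_s^*}\,dx=S^{N/(sp)}-\int_{B_\delta^c} U_\epsilon^{p_s^*}\,dx,
\]
and inserting $U_\epsilon(r)^{p_s^*}\le C\epsilon^{N/(p-1)}r^{-Np/(p-1)}$ from Lemma~\ref{c1} into a polar-coordinate computation gives $\int_{B_\delta^c} U_\epsilon^{p_s^*}\,dx\le C(\epsilon/\delta)^{N/(p-1)}$. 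The main obstacle is the seminorm estimate: the non-Hilbert, nonlocal nature of $[\,\cdot\,]_{s,p}^p$ rules out the chain-rule manipulations available when $p=2$, and the whole argument hinges on the H\"older trick $G_{\epsilon,\delta}'=(g_{\epsilon,\delta}')^{1/p}$, which is precisely what converts the Gagliardo double integral into a single integral tied to the equation for $U_\epsilon$. Once this reduction is secured, the remaining work is careful book-keeping with Lemma~\ref{c1}.
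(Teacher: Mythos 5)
Your proposal is correct and essentially reproduces the argument of Mosconi--Perera--Squassina--Yang (\cite{SKS}), which the paper cites without reproving. The two key steps you identify are exactly the ones used there: the H\"older inequality $G_{\epsilon,\delta}'=(g_{\epsilon,\delta}')^{1/p}$ converts the Gagliardo seminorm of $u_{\epsilon,\delta}=G_{\epsilon,\delta}(U_\epsilon)$ into the pairing $\int U_\epsilon^{p_s^*-1}g_{\epsilon,\delta}(U_\epsilon)$ via the Euler--Lagrange equation for $U_\epsilon$, and the uniform bound $m_{\epsilon,\delta}\le 2$ (from $U(\kappa r)/U(r)\le 1/2$ once $\delta/\epsilon>2$) lets you absorb the boundary-layer contributions; combining the scaling $\int U_\epsilon^{p_s^*-1}=\epsilon^{(N-sp)/p}\int U^{p_s^*-1}$ with $U_\epsilon(\delta)\lesssim \epsilon^{(N-sp)/(p(p-1))}\delta^{-(N-sp)/(p-1)}$ indeed produces $(\epsilon/\delta)^{(N-sp)/(p-1)}$, and the $L^{p_s^*}$ tail is a straightforward polar-coordinate estimate giving $(\epsilon/\delta)^{N/(p-1)}$.
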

\begin{lemma}\label{b}\cite{AFI,CSS}
There exists a constant $C=C(N,p,s)>0$ such that for any $\epsilon<\frac{\delta}{2}$
\begin{eqnarray*}
\|{u_{\epsilon,\delta }}\|_p^p \le \left\{ \arraycolsep=1.5pt
   \begin{array}{ll}
C\epsilon^{sp}\ \ & \text{if}\ {N>sp^2},\\
C\epsilon^{sp}\log\left(\frac{\delta }{\epsilon}\right)\ \ & \text{if}\ {N=sp^2},\\
C{\epsilon^{\frac{{N - sp}}{{p - 1}}}}{\delta ^{\frac{{s{p^2} - N}}{{p - 1}}}} - C{\epsilon^{sp}}\ \ & \text{if}\ {N<sp^2},
\end{array} \right.
\end{eqnarray*}
and
\begin{eqnarray*}
\|{u_{\epsilon,\delta }}\|_q^q \le \left\{ \arraycolsep=1.5pt
   \begin{array}{ll}
C{\epsilon^{N - \frac{{(N - sp)}}{p}q}}\ \ & \text{if}\ {q > \frac{{N(p - 1)}}{{N - sp}}},\\
C{\epsilon^{N - \frac{{(N - sp)}}{p}q}}|\log\epsilon|\ \ & \text{if}\ {q = \frac{{N(p - 1)}}{{N - sp}}},\\
C{\epsilon^{\frac{{(N - sp)q}}{{p(p - 1)}}}}\ \ & \text{if}\ {q <\frac{{N(p - 1)}}{{N - sp}}}.
\end{array} \right.
\end{eqnarray*}
\end{lemma}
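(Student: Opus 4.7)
The plan is to reduce both norm estimates to integrals of $U^p$ and $U^q$ over a large ball via a scaling argument, and then use the sharp decay of $U$ from Lemma \ref{c1} to split the integral into a bounded part (near the origin) and a tail, and determine which regime we are in according to whether the tail integrand is integrable, borderline, or divergent at infinity.

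First I would bound $\|u_{\epsilon,\delta}\|_p^p$ by using $0\le u_{\epsilon,\delta}\le U_\epsilon\chi_{B_{\kappa\delta}}$ and performing the substitution $y=x/\epsilon$:
\begin{equation*}
\|u_{\epsilon,\delta}\|_p^p \le \int_{B_{\kappa\delta}} U_\epsilon(x)^p\,dx = \epsilon^{sp}\int_{B_{\kappa\delta/\epsilon}} U(y)^p\,dy.
\end{equation*}
Since $U\in L^\infty(\mathbb{R}^N)$, the contribution from $B_1$ is a bounded constant. On the complement I would insert the upper bound $U(r)\le c_2 r^{-(N-sp)/(p-1)}$, reducing the tail to a one-dimensional integral of $r^{\alpha}$ with $\alpha=(sp^2-N)/(p-1)-1$. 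The sign of $sp^2-N$ then dictates the three regimes: for $N>sp^2$ the tail integral converges, yielding $C\epsilon^{sp}$; for $N=sp^2$ it is logarithmic, yielding $C\epsilon^{sp}\log(\delta/\epsilon)$; for $N<sp^2$ it grows like $R^{(sp^2-N)/(p-1)}$ with $R=\kappa\delta/\epsilon$, and after reinserting the $\epsilon^{sp}$ prefactor and simplifying the exponent $sp-(sp^2-N)/(p-1)=(N-sp)/(p-1)$ one obtains the stated bound $C\epsilon^{(N-sp)/(p-1)}\delta^{(sp^2-N)/(p-1)}-C\epsilon^{sp}$, the negative term absorbing the bounded-part contribution from $B_1$.

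The argument for $\|u_{\epsilon,\delta}\|_q^q$ is structurally identical: after the same substitution one arrives at $\epsilon^{N-q(N-sp)/p}\int_{B_{\kappa\delta/\epsilon}} U(y)^q\,dy$, and the critical threshold is now the value of $q$ for which the radial integrand $r^{N-1-q(N-sp)/(p-1)}$ becomes non-integrable at infinity, namely $q=N(p-1)/(N-sp)$. For $q$ above this threshold the integral is uniformly bounded, giving $C\epsilon^{N-q(N-sp)/p}$; at equality one picks up a logarithm in $\kappa\delta/\epsilon$, producing the $|\log\epsilon|$ factor; below it the tail dominates and a computation of the exponent $N-q(N-sp)/p+q(N-sp)/(p-1)-N = q(N-sp)/[p(p-1)]$ gives $C\epsilon^{(N-sp)q/[p(p-1)]}$, the $\delta$-dependence being absorbed into the constant.

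The main technical obstacle is to make the three-case analysis entirely rigorous in the borderline situations: handling the logarithmic case $N=sp^2$ (resp. $q=N(p-1)/(N-sp)$) requires keeping track of cut-off radii of order $1$ and $\kappa\delta/\epsilon$ simultaneously, and in the subcritical regime $N<sp^2$ one must carefully verify that the lower-order remainder is indeed of the form $-C\epsilon^{sp}$ and not a spurious positive term, which is why the proof must use the two-sided bound from Lemma \ref{c1} and not just the upper bound. Once these radial computations are assembled, the estimates follow by routine majorization.
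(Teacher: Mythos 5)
Since the paper cites \cite{AFI,CSS} for this lemma without reproducing a proof, there is no in-text argument to compare against; the question is whether your outline stands on its own. The structure is indeed the standard one: bound $u_{\epsilon,\delta}$ by $U_\epsilon\chi_{B_{\kappa\delta}}$, rescale $y=x/\epsilon$ to pull out $\epsilon^{sp}$ (resp.\ $\epsilon^{N-q(N-sp)/p}$), split the ball $B_{\kappa\delta/\epsilon}$ into $B_1$ plus a tail, and feed the decay $U(r)\le c_2 r^{-(N-sp)/(p-1)}$ into the radial tail integral. Your exponent bookkeeping is correct in all three regimes for the $L^p$ estimate and all three for the $L^q$ estimate, including the observation that $sp-(sp^2-N)/(p-1)=(N-sp)/(p-1)$ and the identity $N-q(N-sp)/p+q(N-sp)/(p-1)-N=q(N-sp)/\bigl(p(p-1)\bigr)$.

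The one genuine soft spot is the sign of the lower-order term in the case $N<sp^2$. Plugging only the upper bound $U(r)\le c_2 r^{-(N-sp)/(p-1)}$ into the tail gives
\[
\|u_{\epsilon,\delta}\|_p^p\;\le\;\epsilon^{sp}\Bigl[\,\int_{B_1}U^p\;-\;\frac{c_2^p\omega_{N-1}}{\beta}\Bigr]\;+\;\frac{c_2^p\omega_{N-1}\kappa^\beta}{\beta}\,\epsilon^{(N-sp)/(p-1)}\delta^{(sp^2-N)/(p-1)},
\qquad \beta=\tfrac{sp^2-N}{p-1},
\]
and whether the bracketed coefficient is negative depends on the relative sizes of $\int_{B_1}U^p$ and $c_2^p\omega_{N-1}/\beta$, neither of which is under your control. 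You correctly flag that ``the lower-order remainder'' must be checked, and you assert the two-sided bound of Lemma~\ref{c1} is what is needed; but the lower bound $U(r)\ge c_1 r^{-(N-sp)/(p-1)}$ can only give a \emph{lower} bound on the integral and therefore cannot force the \emph{upper} bound to carry a negative correction. As stated, then, your argument proves $\|u_{\epsilon,\delta}\|_p^p\le C\,\epsilon^{(N-sp)/(p-1)}\delta^{(sp^2-N)/(p-1)}+C'\epsilon^{sp}$, not the sharper form with the minus sign. To recover the negative coefficient one would instead have to invoke the precise asymptotic $\lim_{r\to\infty}r^{(N-sp)/(p-1)}U(r)=U_\infty$ and split at a large radius $\rho_0$ where $U$ is already close to $U_\infty r^{-(N-sp)/(p-1)}$, so that the constant contributed by $B_{\rho_0}$ is dominated by the $-\rho_0^\beta$ term coming from the exact antiderivative; that is more delicate than what you sketch. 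Note, however, that in this paper the refined $-C\epsilon^{sp}$ term is never used (the proof of Lemma~\ref{compare} only needs the leading-order $O(\epsilon^{(N-sp)/(p-1)})$ there), so the gap does not affect the downstream argument.
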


\section{The modified problem}\label{modip}

Taking the change of variable $x\mapsto\varepsilon x$, equation (\ref{1.1}) becomes the following equation
\begin{eqnarray}\label{1.1*}
(a + b\left[ u \right]_{s,p}^{(\theta-1)p})(-\Delta)_p^su + V(\varepsilon x){u^{p - 1}} ={u^{p_s^* - 1}} + f(u), \  u>0,\  \mbox{in}\ {\mathbb{R}^N}.
\end{eqnarray}
Now, we introduce a penalized function in the spirit of \cite{PF}, which will be fundamental to obtain our main result.
Let $K>2$ and $a>0$ be such that $a^{p_s^*-1}+f(a)=\frac{V_0}{K} a^{p-1}$, and we consider the function
\begin{eqnarray*}
\tilde f(t): = \left\{ \arraycolsep=1.5pt
   \begin{array}{ll}
{({t^ + })^{p_s^* - 1}}+ f(t)\ \ &\mbox{if}\ t\leq a,\\
\frac{{{V_0}}}{K}{t^{p - 1}}\ \ &\mbox{if}\ t>a.
\end{array} \right.
\end{eqnarray*}

If $\chi_\Lambda$ denotes the characteristic function of the set $\Lambda$, we introduce the penalized nonlinearity $g: \mathbb{R}^N\times\mathbb{R}\rightarrow\mathbb{R}$ by setting
\begin{equation}\label{g}
g(x,t)=\chi_\Lambda(x)\left(t^{p_s^*-1}+f(t)\right)+\left(1-\chi_\Lambda(x)\right)\tilde f(t).
\end{equation}
From conditions $(F_1)$-$(F_4)$, it is easy to check that $g$ satisfies the following properties.\

$(G_1)$ $\mathop {\lim }\limits_{t \to {0^ + }} \frac{{g(x,t)}}{{{t^{\theta p-1}}}} = 0$ uniformly in $x\in \mathbb{R}^N$;

$(G_2)$ $g(x,t)\leq t^{p_s^*-1}+ f(t)$ for all $x\in \mathbb{R}^N, t>0$;\

$(G_3)$ $(i)$ $0<\mu G(x,t):=\mu\int_0^t {g(x,\tau )} d\tau \leq g(x,t)t$ for all $x\in \Lambda$ and $t>0$,\

\ \ \ \ \ \ \ $(ii)$ $0\leq pG(x,t)<g(x,t)t\leq \frac{V_1}{K} a^{p}$ for all $x\in \mathbb{R}^N\setminus\Lambda$ and $t>0$;

$(G_4)$ the map $t\mapsto \frac{g(x,t)}{t^{\theta p-1}}$ is increasing for all $x\in\Lambda$ and $t>0$, and the map $t\mapsto \frac{g(x,t)}{t^{\theta p-1}}$ is increasing for all $x\in\mathbb{R}^N\setminus\Lambda$ and $t\in(0,a)$.\\
Then, we consider the following modified problem
\begin{eqnarray}\label{1.2}
(a + b\left[ u \right]_{s,p}^{(\theta-1)p})(-\Delta)_p^su  + V(\varepsilon x){u^{p - 1}} = g(\varepsilon x,u),\ \ u>0,\ \   \mbox{in}\ {\mathbb{R}^N}.
\end{eqnarray}
Let us note that if $u$ is a solution of (\ref{1.1}) such that
\begin{equation*}
|u(x)| \le a\ \ \text{for}\ \text{any}\ x\in \mathbb{R}^N\setminus\Lambda_\varepsilon,
\end{equation*}
where $\Lambda_\varepsilon:=\left\{x\in \mathbb{R}^N: \varepsilon x\in\Lambda\right\}$, then $u$ is also a solution of (\ref{1.1*}). Therefore, in order to study weak solutions of (\ref{1.2}), we seek the critical points of the following functional
\begin{equation*}
{J_\varepsilon }(u) = \frac{1}{p}\|u\|_\varepsilon^p + \frac{b}{{\theta p}}[u]_{_{s,p}}^{\theta p}  - \int_{\mathbb{R}^N} {G(\varepsilon x,u)} dx,
\end{equation*}
which is well-defined for all $u: \mathbb{R}^N\rightarrow\mathbb{R}$ belonging to the following fractional space
\begin{equation*}
{\mathcal{W}}_\varepsilon:=\left\{u\in{W^{s,p}}(\mathbb{R}^N):\int_{\mathbb{R}^N} {V(\varepsilon x)} |u{|^p}dx<+\infty\right\}
\end{equation*}
endowed with the norm
\begin{equation*}
\|u\|_\varepsilon ^p: = a[u]_{s,p}^p +\int_{\mathbb{R}^N} {V(\varepsilon x)} |u{|^p}dx.
\end{equation*}
Standard arguments show that ${J_\varepsilon }\in\mathcal{C}^1({\mathcal{W}}_\varepsilon,\mathbb{R})$ and its differential is given by
\begin{align*}
\left\langle {{J'_\varepsilon }(u),v} \right\rangle =&(a + b[u]_{s,p}^{(\theta-1)p}) \iint\limits_{\mathbb{R}^{2N}}\frac{{|u(x) - u(y){|^{p - 2}}(u(x) - u(y))(v(x) - v(y))}}{{|x - y{|^{N + sp}}}}dxdy\\
&+\int_{\mathbb{R}^N} {V(\varepsilon x)} |u{|^{p-2}}uvdx-\int_{\mathbb{R}^N} g(\varepsilon x,u)vdx
\end{align*}
for any $u,v\in {\mathcal{W}}_\varepsilon$. Let us introduce the Nehari manifold associated with (\ref{1.2}), that is,
\begin{equation*}
{\mathcal{N}}_\varepsilon:=\left\{u\in{{\mathcal{W}}_\varepsilon\setminus\{0\}}:\left\langle {{{J'}_\varepsilon }(u),u} \right\rangle =0\right\}.
\end{equation*}
We first shown that ${J_\varepsilon }$ possesses geometric structure.
\begin{lemma}\label{mpt}
The functional ${J_\varepsilon }$ satisfies the following conditions:\

$(i)$ there exist $\alpha, \rho>0$ such that ${J_\varepsilon }(u)\geq\alpha$ for any $u\in{\mathcal{W}}_\varepsilon$ such that $\|u\|_\varepsilon=\rho$;\

$(ii)$ there exists $e\in{\mathcal{W}}_\varepsilon$ with $\|e\|_\varepsilon>\rho$ such that ${J_\varepsilon }(e)<0$.
\end{lemma}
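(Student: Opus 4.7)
\textbf{Proof plan for Lemma \ref{mpt}.}

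For part $(i)$, the plan is to produce a pointwise estimate on $G(x,t)$ of the form $G(x,t)\le \xi|t|^{\theta p}+C_\xi |t|^{p_s^*}$. Combining $(G_1)$ with the growth $g(x,t)\le t^{p_s^*-1}+f(t)$ from $(G_2)$ and the upper growth on $f$ from $(F_2)$, one obtains, for any $\xi>0$, a constant $C_\xi>0$ such that $g(x,t)\le \xi |t|^{\theta p-1}+C_\xi |t|^{p_s^*-1}$ uniformly in $x$; integrating gives the estimate on $G$. Since $V\ge V_1>0$, one has $\|u\|_{s,p}^p\le C\|u\|_\varepsilon^p$, so Lemma \ref{SE} yields $\|u\|_{\theta p}^{\theta p}\le C\|u\|_\varepsilon^{\theta p}$ and $\|u\|_{p_s^*}^{p_s^*}\le C\|u\|_\varepsilon^{p_s^*}$ (both continuous embeddings hold since $p\le \theta p<p_s^*$). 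Dropping the nonnegative Kirchhoff term $\frac{b}{\theta p}[u]_{s,p}^{\theta p}$ then gives
\begin{equation*}
J_\varepsilon(u)\ge \frac{1}{p}\|u\|_\varepsilon^p-C_1\xi\|u\|_\varepsilon^{\theta p}-C_2\|u\|_\varepsilon^{p_s^*}.
\end{equation*}
Because $\theta p>p$ and $p_s^*>p$, fixing $\xi$ small and then $\rho>0$ small makes the right-hand side $\ge \alpha>0$ whenever $\|u\|_\varepsilon=\rho$.

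For part $(ii)$, the idea is to test along a ray inside $\Lambda_\varepsilon$ so that the critical part of $g$ is active. Fix any nonnegative $\varphi\in \mathcal{C}_c^\infty(\mathbb{R}^N)\setminus\{0\}$ with $\operatorname{supp}\varphi\subset \Lambda_\varepsilon$ (which exists for $\varepsilon$ small since $\Lambda$ is open). For $x\in\Lambda_\varepsilon$ one has $g(\varepsilon x,t)=t^{p_s^*-1}+f(t)$ for $t>0$, hence $G(\varepsilon x,t)\ge t^{p_s^*}/p_s^*$ on the support of $\varphi$. Therefore
\begin{equation*}
J_\varepsilon(t\varphi)\le \frac{t^p}{p}\|\varphi\|_\varepsilon^p+\frac{b\,t^{\theta p}}{\theta p}[\varphi]_{s,p}^{\theta p}-\frac{t^{p_s^*}}{p_s^*}\|\varphi\|_{p_s^*}^{p_s^*}.
\end{equation*}
Since $\theta p<p_s^*$ (by assumption) and $p<p_s^*$, the $-t^{p_s^*}$ term dominates as $t\to\infty$, so $J_\varepsilon(t\varphi)\to-\infty$; take $e=t_0\varphi$ with $t_0$ so large that $\|e\|_\varepsilon>\rho$ and $J_\varepsilon(e)<0$.

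There is no real obstacle here: the proof is the standard mountain pass geometry, and the only point that requires attention is the balance of exponents. For $(i)$ one uses $\theta p>p$ (which holds since $\theta>1$) so that the Kirchhoff contribution is harmless at small scales; for $(ii)$ one uses the hypothesis $\theta p<p_s^*$ so that the Kirchhoff term at large scales is still dominated by the critical term. The localization of $\varphi$ inside $\Lambda_\varepsilon$ is what allows us to bypass the upper bound imposed on $g$ outside $\Lambda$ by condition $(G_3)(ii)$, which would otherwise prevent the functional from going to $-\infty$.
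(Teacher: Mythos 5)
Your proposal is correct and follows essentially the same route as the paper: for $(i)$, derive the pointwise growth bound $|g(\varepsilon x,t)|\le \xi|t|^{\theta p-1}+C_\xi|t|^{p_s^*-1}$ from $(G_1)$, $(G_2)$, $(F_2)$, drop the nonnegative Kirchhoff term, and use Sobolev embeddings together with $\theta p>p$ and $p_s^*>p$; for $(ii)$, restrict the integral of $G$ to $\Lambda_\varepsilon$, use $G(\varepsilon x,t)\ge t^{p_s^*}/p_s^*$ there, and send $t\to\infty$ using $\theta p<p_s^*$. Your version is a touch more explicit than the paper's (using $\xi$ rather than overloading $\varepsilon$, and specifying a test function $\varphi$ supported in $\Lambda_\varepsilon$ so that $\int_{\Lambda_\varepsilon}|\varphi|^{p_s^*}>0$), but the argument is the same.
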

\begin{proof}
$(i)$ For $\varepsilon>0$ small, by $(G_1)$ and $(G_2)$, we have
\begin{equation}\label{g1}
|g(\varepsilon x,t)|\leq \varepsilon |t|^{\theta p-1}+C_\varepsilon |t|^{p_s^*-1},\ \ \text{for}\ \text{all}\ \ x\in \mathbb{R}^N,\ \ t\in \mathbb{R}.
\end{equation}
Therefore, for any $u\in\mathcal{W}_\varepsilon\setminus\{0\}$, we get
\begin{equation*}
{J_\varepsilon(u) }\geq\frac{{\|u\|_\varepsilon ^p}}{p} - \frac{{{C_1}\varepsilon }}{{\theta p}}\|u\|_\varepsilon ^{\theta p} - \frac{{{C_2}{C_\varepsilon }}}{{p_s^*}}\|u\|_\varepsilon ^{p_s^*}.
\end{equation*}
Hence, since $\theta>1$, there exist $\alpha, \rho>0$ such that ${J_\varepsilon }(u)\geq\alpha$ with $\|u\|_\varepsilon=\rho$.\

$(ii)$ For all $t>0$, we infer that
\begin{align}
{J_\varepsilon }(u) \leq& \frac{t^p}{p}\|u\|_\varepsilon^p + \frac{bt^{\theta p}}{{\theta p}}[u]_{_{s,p}}^{\theta p}  - \int_{\Lambda_\varepsilon} {G(\varepsilon x,tu)} dx\nonumber\\
\leq& \frac{t^p}{p}\|u\|_\varepsilon^p + \frac{bt^{\theta p}}{{\theta p}}[u]_{_{s,p}}^{\theta p}  - \frac{t^{p_s^*}}{p_s^*} \int_{\Lambda_\varepsilon} {|u^+|^{p_s^*}} dx.
\end{align}
Thanks to $\theta p<p_s^*$, the conclusion $(ii)$ holds.
\end{proof}

\begin{lemma}\label{kmax}
Define the function $H(t)=\frac{a}{p}{S}{t^p} + \frac{b}{{\theta p}}S^\theta{t^{\theta p}} - \frac{1}{{p_s^*}}{t^{p_s^*}}$. Then there exists a unique ${\hat T}>0$ such that
\[
H(\hat T)=\sup\limits_{t\geq0}H(t).
\]
\end{lemma}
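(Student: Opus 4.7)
The plan is to study $H$ on $[0,\infty)$ using elementary calculus, exploiting the three relevant exponents $p<\theta p<p_s^*$. First I would observe that $H(0)=0$ and that, since the leading term at infinity is $-t^{p_s^*}/p_s^*$, we have $H(t)\to-\infty$ as $t\to\infty$; also $H(t)>0$ for small $t>0$ because the $\frac{a}{p}St^p$ contribution dominates the other two terms near zero. This already guarantees the existence of a positive global maximum, so everything reduces to showing uniqueness of the critical point on $(0,\infty)$.

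Next I would compute
\[
H'(t)=aSt^{p-1}+bS^\theta t^{\theta p-1}-t^{p_s^*-1}=t^{p-1}\psi(t),\qquad \psi(t):=aS+bS^\theta t^{(\theta-1)p}-t^{p_s^*-p},
\]
so critical points on $(0,\infty)$ correspond bijectively to positive zeros of $\psi$. Note $\psi(0)=aS>0$ and, because $\theta p<p_s^*$ implies $(\theta-1)p<p_s^*-p$, the term $-t^{p_s^*-p}$ dominates at infinity, so $\psi(t)\to-\infty$.

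Then I would study the shape of $\psi$ via
\[
\psi'(t)=t^{(\theta-1)p-1}\bigl[bS^\theta(\theta-1)p-(p_s^*-p)\,t^{p_s^*-\theta p}\bigr].
\]
Since $p_s^*-\theta p>0$, the bracket is strictly decreasing in $t$, positive at $0$, and tends to $-\infty$, so it vanishes at a unique $t_0>0$. Thus $\psi$ is strictly increasing on $(0,t_0)$ and strictly decreasing on $(t_0,\infty)$, i.e.\ unimodal. Combined with $\psi(0)>0$ and $\psi(\infty)=-\infty$, this forces $\psi$ to have exactly one zero $\hat T$ in $(0,\infty)$, with $\psi>0$ on $(0,\hat T)$ and $\psi<0$ on $(\hat T,\infty)$.

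Finally, translating back to $H$, the sign analysis of $\psi$ gives $H'>0$ on $(0,\hat T)$ and $H'<0$ on $(\hat T,\infty)$, so $\hat T$ is the unique maximizer and $H(\hat T)=\sup_{t\ge 0}H(t)$. The only mildly delicate point is the strict monotonicity of the bracket in $\psi'$, which is the place where the assumption $\theta p<p_s^*$ is genuinely used; everything else is bookkeeping with the three exponents.
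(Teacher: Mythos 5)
Your argument is correct and is essentially the paper's proof: you factor $H'(t)=t^{p-1}\psi(t)$ with the same auxiliary function $\psi$ (the paper calls it $\hat H$), observe $\psi(0)=aS>0$, $\psi(t)\to-\infty$, and use the single sign change of $\psi'$ coming from $\theta p<p_s^*$ to deduce that $\psi$ is unimodal and hence has a unique positive zero $\hat T$, with $H'>0$ before and $H'<0$ after. The only cosmetic difference is that you also record upfront that $H(0)=0$, $H>0$ near $0$, and $H\to-\infty$ to justify existence of the maximum, which the paper leaves implicit.
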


\begin{proof}
For $t>0$, we have
\begin{equation}\label{dK}
H'(t) = {t^{p - 1}}\hat H(t),\ \ \hat H(t) = a{S} + bS^\theta{t^{(\theta-1)p}} - {t^{p_s^* - p}}.
\end{equation}
Moreover,
\begin{equation*}
\hat H'(t) = {t^{(\theta-1)p - 1}}\left((\theta-1)pbS^\theta - (p_s^* - p){t^{p_s^* - \theta p}}\right).
\end{equation*}
Thus there exists a unique $\hat T>0$ such that $\hat H>0$ if $t\in(0,\hat T)$ and $\hat H<0$ if $t\in(\hat T,+\infty)$. Thus, $\hat T$ is the unique maximum point of $H(t)$.
\end{proof}

In view of Lemma \ref{mpt}, we can define the minimax level
\begin{equation*}
{c_\varepsilon } = \mathop {\inf }\limits_{\gamma  \in {\Gamma _\varepsilon }} \mathop {\max }\limits_{t \in [0,1]} {J_\varepsilon }(\gamma (t)),
\end{equation*}
where ${\Gamma _\varepsilon } = \left\{ {\gamma  \in C([0,1]),{\mathcal{W}_\varepsilon }):\gamma (0) = 0\ \ \text{and}\ \ {J_\varepsilon }(\gamma (1)) < 0} \right\}$. From \cite[Theorem 4.2]{W}, it is standard to verify that $c_\varepsilon$ can be characterized as follows
\begin{equation}\label{c_e}
{c_\varepsilon } = \mathop {\inf }\limits_{u \in {\mathcal{W}_\varepsilon }\setminus\{0\}} \mathop {\sup }\limits_{t \ge 0} {J_\varepsilon }(tu) = \mathop {\inf }\limits_{u \in \mathcal{N}_\varepsilon} {J_\varepsilon }(u).
\end{equation}

\begin{lemma}\label{compare}
There holds
\begin{equation*}
{c_\varepsilon } <{c_0}= : H(\hat T)
\end{equation*}
for all $\varepsilon>0$, where $H(\hat{T})$ was given in Lemma \ref{kmax}.
\end{lemma}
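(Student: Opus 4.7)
The plan is to use the mountain-pass characterisation (\ref{c_e}), testing with a shifted truncated Sobolev minimizer whose support sits inside $\Lambda_\varepsilon$. Fix $x_0 \in M$, so that $V(x_0) = V_0$. Given $\varepsilon > 0$, choose $\delta > 0$ small enough that $B_{\kappa\delta\varepsilon}(x_0) \subset \Lambda$ and that the oscillation of $V$ on that ball is $o(1)$ as $\delta \to 0$. Then, for $\epsilon \in (0, \delta/2)$, set
\[
\eta_{\epsilon,\delta}(x) := u_{\epsilon,\delta}\bigl(x - x_0/\varepsilon\bigr),
\]
where $u_{\epsilon,\delta}$ is as in (\ref{ue}). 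This function is supported in $B_{\kappa\delta}(x_0/\varepsilon) \subset \Lambda_\varepsilon$, so $g(\varepsilon x,t) = t^{p_s^*-1} + f(t)$ on the support, and $V(\varepsilon x) \leq V_0 + o_\delta(1)$ there.

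Next I will estimate $\sup_{t \geq 0} J_\varepsilon(t\eta_{\epsilon,\delta})$. Since $\theta p < p_s^*$, the fiber map $t \mapsto J_\varepsilon(t\eta_{\epsilon,\delta})$ is bounded above and attains its maximum at some $t_\epsilon > 0$. Applying the lower bound $F(t) \geq (\lambda/q) t_+^q$ provided by $(F_2)$ yields
\[
J_\varepsilon(t\eta_{\epsilon,\delta}) \leq \frac{a t^p}{p}[\eta_{\epsilon,\delta}]_{s,p}^p + \frac{b t^{\theta p}}{\theta p}[\eta_{\epsilon,\delta}]_{s,p}^{\theta p} - \frac{t^{p_s^*}}{p_s^*}\|\eta_{\epsilon,\delta}\|_{p_s^*}^{p_s^*} + \frac{(V_0+o(1)) t^p}{p}\|\eta_{\epsilon,\delta}\|_p^p - \frac{\lambda t^q}{q}\|\eta_{\epsilon,\delta}\|_q^q.
\]
Inserting the sharp Sobolev asymptotics from Lemma \ref{a} (raised to the $\theta$-th power for the Kirchhoff term), the first three summands, taken with their supremum in $t$, give at most $H(\hat T) + C(\epsilon/\delta)^{(N-sp)/(p-1)}$; this is because $H$ is precisely the leading-order form of these terms once $\eta_{\epsilon,\delta}$ is renormalised in $L^{p_s^*}(\mathbb{R}^N)$, and one checks via a perturbation argument around $\hat T$ that the supremum is still close to $H(\hat T)$.

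It remains to show that the residual positive error $\|\eta_{\epsilon,\delta}\|_p^p$, together with the Sobolev correction, is dominated by the subcritical gain $\lambda\|\eta_{\epsilon,\delta}\|_q^q$. Using the orders in Lemma \ref{b} and matching lower bounds on $\|\eta_{\epsilon,\delta}\|_q^q$ (which follow from the explicit shape of $u_{\epsilon,\delta}$), the comparison of powers of $\epsilon$ reproduces exactly the tripartite dichotomy built into $(F_2)$: if $N \geq sp^2$, or if $sp < N < sp^2$ and $q > p_s^* - p/(p-1)$, the subcritical gain dominates both errors for any $\lambda > 0$; in the remaining range $sp < N < sp^2$ with $\theta p < q \leq p_s^* - p/(p-1)$, the dominance holds only for $\lambda$ sufficiently large, which is precisely what $(F_2)$ requires. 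In either case, for $\epsilon, \delta$ small we obtain $\sup_{t \geq 0} J_\varepsilon(t\eta_{\epsilon,\delta}) < H(\hat T)$, and (\ref{c_e}) then delivers $c_\varepsilon < c_0$.

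The main obstacle will be the exponent matching in the previous paragraph. Because $\hat T$ depends nonlinearly on both $a$ and $b$, $H(\hat T)$ is not of the standard Brezis--Nirenberg form $\tfrac{s}{N}S^{N/(sp)}$, and the Kirchhoff contribution $\tfrac{b}{\theta p}[\eta_{\epsilon,\delta}]_{s,p}^{\theta p}$ introduces higher-order Sobolev corrections that must be tracked carefully and then absorbed against the subcritical gain. Balancing the exponents $(N-sp)/(p-1)$ from the Sobolev correction, $sp$ (or its threshold analogues from Lemma \ref{b}) from the potential, and $N - (N-sp)q/p$ (respectively $(N-sp)q/(p(p-1))$) from the subcritical gain is what forces the threshold $q = p_s^* - p/(p-1)$ to appear and dictates precisely when $\lambda$ must be taken large.
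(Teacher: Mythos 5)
Your proposal follows essentially the same route as the paper: truncated Sobolev bubbles $u_{\epsilon,\delta}$ supported in $\Lambda_\varepsilon$, the sharp asymptotics of Lemma \ref{a} pushing $\sup_t$ of the Sobolev--Kirchhoff part below $H(\hat T)$ plus an $O((\epsilon/\delta)^{(N-sp)/(p-1)})$ correction, and then the Lemma \ref{b} exponent comparison between the $L^p$ error and the $\lambda\|\cdot\|_q^q$ gain, split into the same $N\gtrless sp^2$ and $q\gtrless p_s^*-\tfrac{p}{p-1}$ cases that $(F_2)$ anticipates. The only cosmetic difference is that you recentre the bubble at $x_0\in M$ and bound $V(\varepsilon x)$ by $V_0+o_\delta(1)$, whereas the paper centres at the origin of $\Lambda_\varepsilon$ and uses the cruder bound $V_2=|V|_\infty$; since the potential term is lower order in either case, this changes nothing.
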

\begin{proof}
Let $u_{\epsilon,\delta}$ be the function defined in (\ref{ue}) such that $supp(u_{\epsilon,\delta})\subset B_{\theta\delta}\subset\Lambda_\varepsilon$. For simplicity, we take $\delta=1$ and we set $u_\epsilon:=u_{\epsilon,1}$. Then, by $(F_2)$, we have
\begin{align*}
{J_\varepsilon }(tu_\epsilon) \leq \frac{at^{p}}{{p}}[u_\epsilon]_{_{s,p}}^{p}+\frac{V_2t^{p}}{{p}}\|u_\epsilon\|_p^{p} + \frac{bt^{\theta p}}{{\theta p}}[u_\epsilon]_{_{s,p}}^{\theta p}  - \frac{\lambda t^q}{q}\|u_\epsilon\|_q^{q} - \frac{ t^{p_{s^*}}}{{p_{s^*}}}\|u_\epsilon\|_{p_{s^*}}^{{p_{s^*}}}\rightarrow-\infty\ \ \mbox{as}\ \ t\rightarrow\infty,
\end{align*}
where $V_2=|V|_\infty$. Therefore, there exists $t_\epsilon>0$ such that
\begin{equation*}
{J_\varepsilon }(t_\epsilon{u_\epsilon})=\mathop {\max }\limits_{t\geq 0} {J_\varepsilon }(t{u_\epsilon}).
\end{equation*}
Next, we can show that there exist $t_1, t_2>0$ (independent of $\epsilon>0$) satisfying $t_1\leq t_\epsilon\leq t_2$. Indeed, since $\left\langle {{J'_\varepsilon }({t_\epsilon }{u_\epsilon }),{u_\epsilon }} \right\rangle  = 0$, we have
\begin{equation*}
t_\epsilon^p{\|u_\epsilon\|_\epsilon^p}+bt_\epsilon^{\theta p}[u_\epsilon]_{_{s,p}}^{\theta p}=t_\epsilon^{p_s^*}{\|u_\epsilon\|_{p_s^*}^{p_s^*}}+\int_{\mathbb{R}^N} f(t_\epsilon {u_\epsilon}){t_\epsilon u_\epsilon} dx.
\end{equation*}
Moreover
\begin{equation*}
t_\epsilon^p{\|u_\epsilon\|_\epsilon^p}+bt_{\epsilon}^{\theta p}[u_\epsilon]_{_{s,p}}^{\theta p}\geq {t_\epsilon^{p_s^*}}{\|u\|_{p_s^*}^{p_s^*}},
\end{equation*}
which implies that $|t_\epsilon|\leq t_2$. On the other hand, there exists $t_1>0$ such that $t_\varepsilon>t_1$ for $\epsilon>0$ small. Otherwise, there exists $\epsilon_n\to 0$ as $n\to \infty$ such that $t_{\epsilon_n}\to 0$ as $n\to \infty$. Thus,
\begin{equation*}
0<c_\varepsilon\leq\mathop {\max }\limits_{t\geq 0} {J_\varepsilon }(t{u_\epsilon})\to 0,
\end{equation*}
which gives a contradiction, we can see that
\begin{align*}
{J_\varepsilon }(t_\epsilon u_\epsilon) \leq& \frac{at_\epsilon^p}{p}[u_\epsilon]_{s,p}^{p}  + \frac{b t_\epsilon^{\theta p}}{{\theta p}}[u_\epsilon]_{s,p}^{\theta p}  -  \frac{t_\epsilon^{p_s^*}}{p_s^*}\|u_\epsilon\|_{{p_s^*}}^{p_s^*}+\frac{V_2 t_\epsilon^p}{p}\|u_\epsilon\|_{p}^p-\frac{\lambda t_\epsilon^q}{q}\|u_\epsilon\|_q^{q}\nonumber\\
=:& I_\epsilon(t_\epsilon)+B_\epsilon-C_\epsilon,
\end{align*}
where $ I_\epsilon(t):=\frac{a}{p}{A_\epsilon}t^{p}  + \frac{b}{{\theta p}}A_\epsilon^\theta t^{\theta p}  -  \frac{D_\epsilon}{p_s^*}t^{p_s^*}$. Therefore,
\begin{equation*}
c_\epsilon\leq\mathop {\sup }\limits_{t\geq 0} {I_\epsilon }(t)+B_\epsilon-C_\epsilon.
\end{equation*}
Gathering the estimates in Lemma \ref{b}, we get
\begin{eqnarray}\label{ec}
c_\varepsilon\leq \mathop {\sup }\limits_{t\geq 0} {I_\epsilon }(t)+
\left\{ \arraycolsep=1.5pt
   \begin{array}{ll}
O(\epsilon^{\frac{N-sp}{p-1}})-\frac{\lambda t_1^q}{q}\|u_\epsilon\|_q^q\ \ & \text{if}\ {sp^2>N}\\
O(\epsilon^{sp}(1+|\log\epsilon|))-\frac{\lambda t_1^q}{q}\|u_\epsilon\|_q^q\ \ & \text{if}\ {N=sp^2}\\
O(\epsilon^{sp})-\frac{\lambda t_1^q}{q}\|u_\epsilon\|_q^q\ \ & \text{if}\ {sp^2<N}.
\end{array} \right.
\end{eqnarray}
Since ${\frac{N-sp}{p-1}}>0$ and $sp>0$, we obtain
\begin{equation*}
\mathop {\sup }\limits_{t\geq 0} {I_\epsilon }(t)\geq\frac{c_\varepsilon}{2}\ \ \rm{uniformly}\ \ \rm{for}\ \ \epsilon>0\ \ \rm{small}.
\end{equation*}
Arguing as above, there exists $t_3, t_4>0$ (independent of $\epsilon>0$) such that
\begin{equation*}
\mathop {\sup }\limits_{t\geq 0} {I_\epsilon }(t)=\mathop {\sup }\limits_{t\in[t_3,t_4]} {I_\epsilon }(t).
\end{equation*}
By Lemmas \ref{a} and \ref{kmax}, we deduce
\begin{eqnarray*}
c_\varepsilon\leq&  \sup\limits_{t\geq 0} H(S^{-\frac{sp-N}{sp^2}}t)+
\left\{ \arraycolsep=1.5pt
   \begin{array}{ll}
O(\epsilon^{\frac{N-sp}{p-1}})-\frac{\lambda t_1^q}{q}\|u_\epsilon\|_q^q,\ \ & \text{if}\ {sp^2>N},\\
O(\epsilon^{sp}(1+|\log\epsilon|))-\frac{\lambda t_1^q}{q}\|u_\epsilon\|_q^q,\ \ & \text{if}\ {N=sp^2},\\
O(\epsilon^{sp})-\frac{\lambda t_1^q}{q}\|u_\epsilon\|_q^q,\ \ & \text{if}\ {sp^2<N},
\end{array} \right.\\
\leq & H(\hat T)+\left\{ \arraycolsep=1.5pt
   \begin{array}{ll}
O(\epsilon^{\frac{N-sp}{p-1}})-\frac{\lambda t_1^q}{q}\|u_\epsilon\|_q^q,\ \ & \text{if}\ {sp^2>N},\\
O(\epsilon^{sp}(1+|\log\epsilon|))-\frac{\lambda t_1^q}{q}\|u_\epsilon\|_q^q,\ \ & \text{if}\ {N=sp^2},\\
O(\epsilon^{sp})-\frac{\lambda t_1^q}{q}\|u_\epsilon\|_q^q,\ \ & \text{if}\ {sp^2<N}.
\end{array} \right.
\end{eqnarray*}

{\bf{Case 1}.} $N>sp^2$. Then $q>\theta p>p>\frac{N(p-1)}{N-sp}$ and using Lemma \ref{b}, we have
\begin{align*}
c_\varepsilon\leq H(\hat T)+O(\epsilon^{sp})-O(\lambda\epsilon^{N-\frac{N-sp}{p}q}).
\end{align*}
Using the fact $q> p$, we have $N-\frac{N-sp}{p}q<sp$. Thus, for $\epsilon>0$ small enough, we can infer that
\begin{align*}
c_\varepsilon<H(\hat T).
\end{align*}

{\bf{Case 2}.} $N=sp^2$. Thus $q>\theta p>p=\frac{N(p-1)}{N-sp}$, we have
\begin{align*}
c_\varepsilon\leq H(\hat T)+O(\epsilon^{sp}(1+|\log\epsilon|))-O(\lambda\epsilon^{sp^2-s(p-1)q}|\log\epsilon|).
\end{align*}
Observing that $q>p$ yields
\begin{align*}
\mathop {\lim }\limits_{\epsilon\to 0}\frac{\epsilon^{sp^2-s(p-1)q}|\log\epsilon|}{\epsilon^{sp}(1+|\log\epsilon|)}=\infty,
\end{align*}
we get the conclusion for $\epsilon$ small enough.

{\bf{Case 3}.} $N<sp^2$. Then we can deduce that $p_s^*-\frac{p}{p-1}>\frac{N(p-1)}{N-sp}$. Suppose that $p_s^*>q>p_s^*-\frac{p}{p-1}$ and using Lemma \ref{b}, we have
\begin{align*}
c_\varepsilon\leq H(\hat T)+O(\epsilon^{\frac{N-sp}{p-1}})-O(\lambda\epsilon^{N-\frac{N-sp}{p}q}).
\end{align*}
Using the fact $q> p_s^*-\frac{p}{p-1}$, we have $N-\frac{N-sp}{p}q<{\frac{N-sp}{p-1}}$. Thus, for $\epsilon>0$ small enough, one has
\begin{align*}
c_\varepsilon<H(\hat T).
\end{align*}
In view of $q>\theta p$, we have to compare $\theta p$ and $p_s^*-\frac{p}{p-1}$. Thus we define
\begin{equation*}
h(\theta)=\theta p-p_s^*+\frac{p}{p-1},\ \  \forall 1<\theta<\frac{N}{N-sp}.
\end{equation*}
We can easily conclude that $h(\theta)\geq0$ if $\theta\in\left[\theta_0,\frac{N}{N-sp}\right)$ and $h(\theta)< 0$ if $\theta\in (1,\theta_0)$, where $\theta_0:=\frac{N}{N-sp}-\frac{1}{p-1}$.
Consequently, let $\theta\geq\theta_0$. Then $h(\theta)\geq 0$, which implies $q>\theta p\geq p_s^*-\frac{p}{p-1}$. Arguing as above, we have the conclusion for $\epsilon>0$ small enough.

Next, we consider the case $\theta<\theta_0$. Then $h(\theta)<0$ implies $\theta p<p_s^*-\frac{p}{p-1}$. Now, we deal with the case $\theta p<q\leq p_s^*-\frac{p}{p-1}$. For this purpose, we firstly compare $\theta p$ and $\frac{N(p-1)}{N-sp}$. Then we define
\begin{equation*}
m(\theta)=\theta p-\frac{N(p-1)}{N-sp},\ \  \forall 1<\theta<\theta_0.
\end{equation*}
Since $N<sp^2$, we have $\theta_1<\theta_0$, where $\theta_1:=\frac{N(p-1)}{p(N-sp)}$.
So we can easily deduce that $m(\theta)< 0$ if $\theta\in\left(1,\theta_1\right)$ and $m(\theta)\geq0$ if $\theta\in [\theta_1,\theta_0)$.

$(i)$  If $\theta\in [\theta_1,\theta_0)$, we have $m(\theta)\geq 0$, which implies $p_s^*-\frac{p}{p-1}\geq q>\theta p\geq\frac{N(p-1)}{N-sp}$. We get
\begin{align*}
c_\varepsilon\leq H(\hat T)+O(\epsilon^{\frac{N-sp}{p-1}})-O(\lambda\epsilon^{N-\frac{N-sp}{p}q}),
\end{align*}
and choosing $\lambda=\epsilon^{-\mu}$, with $\mu>N-\frac{N-sp}{p}q-\frac{N-sp}{p-1}$, we have the conclusion.

$(ii)$ If $\theta\in\left(1,\theta_1\right)$, we have $m(\theta)< 0$, which implies $\theta p<\frac{N(p-1)}{N-sp}$. We distinguish the following case
\begin{align*}
\theta p<q<\frac{N(p-1)}{N-sp},\ \ q=\frac{N(p-1)}{N-sp}\ \ \mbox{and}\ \ \frac{N(p-1)}{N-sp}<q\leq p_s^*-\frac{p}{p-1}.
\end{align*}
When $\theta p<q<\frac{N(p-1)}{N-sp}$, then, for $\epsilon>0$ small, it holds
\begin{align*}
c_\varepsilon\leq H(\hat T)+O(\epsilon^{\frac{N-sp}{p-1}})-O(\lambda\epsilon^{\frac{N-sp}{p(p-1)}q}),
\end{align*}
and noting that
\begin{align*}
{\frac{N-sp}{p-1}}<{\frac{N-sp}{p(p-1)}q},
\end{align*}
we can take $\lambda=\epsilon^{-\mu}$, with $\mu>{\frac{N-sp}{p(p-1)}q}-{\frac{N-sp}{p-1}}$, to obtain the thesis.\\
When $q=\frac{N(p-1)}{N-sp}$, then
\begin{align*}
c_\varepsilon\leq H(\hat T)+O(\epsilon^{\frac{N-sp}{p-1}})-O(\lambda\epsilon^{\frac{N}{p}}|\log\epsilon|),
\end{align*}
and taking $\lambda=\epsilon^{-\mu}$, with $\mu>\frac{N}{p}-{\frac{N-sp}{p-1}}$, we can deduce the assertion.\\
Finally, when $\frac{N(p-1)}{N-sp}<q\leq p_s^*-\frac{p}{p-1}$, arguing as $(i)$, we get the conclusion.
\end{proof}

\begin{lemma}\label{bdd}
Every $(PS)_c$ sequence of $J_\varepsilon$ is bounded in $\mathcal{W}_\varepsilon$.
\end{lemma}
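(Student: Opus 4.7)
Let $\{u_n\}\subset\mathcal{W}_\varepsilon$ satisfy $J_\varepsilon(u_n)\to c$ and $\|J'_\varepsilon(u_n)\|_{\mathcal{W}_\varepsilon^*}\to 0$. My plan is the classical Ambrosetti--Rabinowitz trick: compute $J_\varepsilon(u_n)-\frac{1}{\mu}\langle J'_\varepsilon(u_n),u_n\rangle$, which on the one hand equals $c+o(1)+o(\|u_n\|_\varepsilon)$, and on the other hand, after grouping and using $\|u_n\|_\varepsilon^p=a[u_n]_{s,p}^p+\int V(\varepsilon x)|u_n|^p\,dx$, reads
\begin{equation*}
\left(\tfrac{1}{p}-\tfrac{1}{\mu}\right)\|u_n\|_\varepsilon^p+b\left(\tfrac{1}{\theta p}-\tfrac{1}{\mu}\right)[u_n]_{s,p}^{\theta p}+\int_{\mathbb{R}^N}\Big(\tfrac{1}{\mu}g(\varepsilon x,u_n)u_n-G(\varepsilon x,u_n)\Big)\,dx.
\end{equation*}
Since $(F_3)$ provides $\mu\in(\theta p,p_s^*)$ with $\theta p>p$, the coefficients $\tfrac{1}{p}-\tfrac{1}{\mu}$ and $\tfrac{1}{\theta p}-\tfrac{1}{\mu}$ are both strictly positive, which is why the Kirchhoff exponent $\theta p$ creates no obstruction here.

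Next I split the remaining integral across $\Lambda_\varepsilon$ and $\mathbb{R}^N\setminus\Lambda_\varepsilon$. On $\Lambda_\varepsilon$, property $(G_3)(i)$ yields $\tfrac{1}{\mu}g(\varepsilon x,u_n)u_n-G(\varepsilon x,u_n)\geq 0$, so this part is simply discarded. On the complement, $(G_3)(ii)$ provides $pG\leq gu$, which gives the lower bound $-\left(\tfrac{1}{p}-\tfrac{1}{\mu}\right)\int_{\mathbb{R}^N\setminus\Lambda_\varepsilon}g(\varepsilon x,u_n)u_n\,dx$. Combining the definition of $\tilde f$, the choice of $a$ via $a^{p_s^*-1}+f(a)=\frac{V_0}{K}a^{p-1}$, and the monotonicity $(F_4)$, one derives the pointwise penalization estimate $g(\varepsilon x,t)t\leq \frac{V_0}{K}t^p$ on $\mathbb{R}^N\setminus\Lambda_\varepsilon$ for every $t\geq 0$; using $V(\varepsilon x)\geq V_1$ from $(V_1)$ this transfers to
\begin{equation*}
\int_{\mathbb{R}^N\setminus\Lambda_\varepsilon}g(\varepsilon x,u_n)u_n\,dx\leq \frac{V_0}{K V_1}\|u_n\|_\varepsilon^p.
\end{equation*}

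Plugging this back and absorbing, I obtain
\begin{equation*}
c+o(1)+o(\|u_n\|_\varepsilon)\geq \left(\tfrac{1}{p}-\tfrac{1}{\mu}\right)\left(1-\tfrac{V_0}{K V_1}\right)\|u_n\|_\varepsilon^p,
\end{equation*}
and choosing $K$ large enough (which is compatible with the standing requirement $K>2$ once $V_0/V_1$ is known) makes the bracket strictly positive, forcing $\|u_n\|_\varepsilon$ to be bounded. The main delicate point of this plan is establishing the pointwise penalization estimate $g(\varepsilon x,t)t\leq \frac{V_0}{K}t^p$ on $\mathbb{R}^N\setminus\Lambda_\varepsilon$: this is the crux of the del Pino--Felmer-type truncation and relies crucially on rewriting $(F_4)$ to deduce that $t\mapsto \tilde f(t)/t^{p-1}$ is non-decreasing on $(0,a]$ and therefore bounded by $\tilde f(a)/a^{p-1}=V_0/K$. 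Once this is in hand, the rest of the argument is purely algebraic.
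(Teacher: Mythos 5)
Your argument is the same Ambrosetti--Rabinowitz computation the paper uses: evaluate $J_\varepsilon(u_n)-\tfrac{1}{\mu}\langle J'_\varepsilon(u_n),u_n\rangle$, drop the nonnegative Kirchhoff term $b\bigl(\tfrac{1}{\theta p}-\tfrac{1}{\mu}\bigr)[u_n]_{s,p}^{\theta p}$ and the $\Lambda_\varepsilon$-part via $(G_3)(i)$, and control the complementary part through the penalization. So the approach is essentially identical.

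One small but worth-noting difference in the final constant. The paper keeps the weight, bounding $G(\varepsilon x,t)\le\frac{V(\varepsilon x)}{pK}t^p$ pointwise outside $\Lambda_\varepsilon$, so the absorbed term carries the factor $\tfrac{1}{K}$ and $K>2$ suffices. You instead establish the pointwise bound $g(\varepsilon x,t)t\le\frac{V_0}{K}t^p$ and then compare to the $V$-weighted norm through $V\ge V_1$, which produces the factor $\tfrac{V_0}{KV_1}\ge\tfrac{1}{K}$ and forces you to ask for $K>V_0/V_1$, a potentially stronger requirement than the fixed $K>2$ of Section~\ref{modip}. In fact your more cautious bookkeeping is exposing a small inconsistency in the paper's penalization: with $a$ and $\tilde f$ built from $V_0$ as written, the inequality $G(\varepsilon x,t)\le\frac{V(\varepsilon x)}{pK}t^p$ used in the paper's proof needs $V(\varepsilon x)\ge V_0$ for $\varepsilon x\notin\Lambda$, which assumption $(V_2)$ does not guarantee; conversely condition $(G_3)(ii)$ is stated with $V_1$. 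The clean resolution (and the one matching del Pino--Felmer) is to define $a$ and $\tilde f$ with $V_1$ in place of $V_0$, after which both your bound and the paper's reduce to $\int_{\mathbb{R}^N\setminus\Lambda_\varepsilon}g(\varepsilon x,u_n)u_n\,dx\le\frac{1}{K}\|u_n\|_\varepsilon^p$ and $K>2$ is enough; your "choose $K$ large enough'' is a valid alternative patch. The rest of your argument, including the monotonicity of $t\mapsto\tilde f(t)/t^{p-1}$ deduced from $(F_4)$, is correct.
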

\begin{proof}
Let $\{u_n\}$ be a $(PS)$ sequence at level $c$, that is
\begin{align*}
J_\varepsilon(u_n)\rightarrow c\ \ \text{and}\ \ {{J'}_\varepsilon }({u_n})\rightarrow 0,
\end{align*}
as $n\to\infty$.
Using $(G_3)$, we can deduce that
\begin{align*}
c+o_n(1)=&J_\varepsilon(u_n)-\frac{1}{\mu}\left\langle {{{J'}_\varepsilon }({u_n}),{u_n}} \right\rangle\\
=&\left({\frac{{\mu - p}}{{p\mu }}}\right)\|u_n\|_\varepsilon^p+\frac{1}{\mu}\int_{\mathbb{R}^N\setminus{\Lambda_\varepsilon}} {[g(\varepsilon x,{u_n}){u_n} - \mu G} (\varepsilon x,{u_n})]dx\\
&+\frac{1}{\mu}\int_{{\Lambda_\varepsilon}} {[g(\varepsilon x,{u_n}){u_n} - \mu G} (\varepsilon x,{u_n})]dx+b(\frac{\mu-\theta p}{\theta p\mu})[{u_n}]_{s,p}^{2p}\\
\geq& \left({\frac{{\mu  - p}}{{p\mu }}}\right)\|u_n\|_\varepsilon^p+\frac{1}{\mu}\int_{\mathbb{R}^N\setminus{\Lambda_\varepsilon}} {[g(\varepsilon x,{u_n}){u_n} - \mu G} (\varepsilon x,{u_n})]dx\\
\geq& \left({\frac{{\mu  - p}}{{p\mu }}}\right)\|u_n\|_\varepsilon^p-\frac{\mu-p}{\mu}\int_{\mathbb{R}^N\setminus{\Lambda_\varepsilon}} { G} (\varepsilon x,{u_n})dx\\
\geq&\left({\frac{{\mu  - p}}{{p\mu }}}\right)\|u_n\|_\varepsilon^p-\left(\frac{\mu-p}{p\mu}\right)\int_{\mathbb{R}^N\setminus{\Lambda_\varepsilon}} \frac{V(\varepsilon x)}{K}|u_n|^pdx\\
\geq& \left({\frac{{\mu - p}}{{p\mu }}}\right)\left(1-\frac{1}{K}\right)\|u_n\|_\varepsilon^p.
\end{align*}
Since $\mu>\theta p$ and $K>2$, we can conclude that $\{u_n\}_{n\in\mathbb{N}}$ is bounded in $\mathcal{W}_\varepsilon$.
\end{proof}

\begin{lemma}\label{nonv}
There exists a sequence $\{z_n\}_{n\in\mathbb{N}}\subset\mathbb{R}^N$ and $R, \beta>0$ such that
\begin{equation*}
\int_{B_R(z_n)} |u_n|^p\ dx\geq\beta>0.
\end{equation*}
Moreover, $\{z_n\}_{n\in\mathbb{N}}$ is bounded in $\mathbb{R}^N$.
\end{lemma}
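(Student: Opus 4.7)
The plan is to argue by contradiction for both assertions, exploiting the penalization structure $(G_3)(ii)$ and the compactness threshold $c_\varepsilon<c_0$ from Lemma \ref{compare}. I first handle the non-vanishing claim. Suppose, toward a contradiction, that $\lim_n\sup_{y\in\mathbb{R}^N}\int_{B_R(y)}|u_n|^p\,dx=0$ for every $R>0$. Since $\{u_n\}$ is bounded in $\mathcal{W}_\varepsilon$ by Lemma \ref{bdd}, Lemma \ref{Lion} yields $u_n\to 0$ in $L^\sigma(\mathbb{R}^N)$ for every $\sigma\in(p,p_s^*)$. Combining $(F_1)$ and $(F_2)$ gives, for every $\eta>0$, some $C_\eta>0$ with $f(t)t+F(t)\leq \eta|t|^{\theta p}+C_\eta|t|^{\sigma}$ for a suitable $\sigma\in(\theta p,p_s^*)$, whence $\int f(u_n)u_n\,dx\to 0$ and $\int F(u_n)\,dx\to 0$. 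Inserting the penalization bound from $(G_3)(ii)$ into $\langle J'_\varepsilon(u_n),u_n\rangle=o(1)$ gives
\[
a[u_n]_{s,p}^p+\Big(1-\tfrac{1}{K}\Big)\!\int V(\varepsilon x)|u_n|^p\,dx+b[u_n]_{s,p}^{\theta p}\leq \int_{\Lambda_\varepsilon}|u_n|^{p_s^*}\,dx+o(1).
\]
If $[u_n]_{s,p}^p\to 0$ along a subsequence, then the left-hand side tends to zero, so $u_n\to 0$ in $\mathcal{W}_\varepsilon$, contradicting $J_\varepsilon(u_n)\to c_\varepsilon>0$. Otherwise, setting $t:=(\lim_n[u_n]_{s,p}^p/S)^{1/p}>0$, Lemma \ref{SE} and the above imply $aSt^p+bS^\theta t^{\theta p}\leq t^{p_s^*}$, so Lemma \ref{kmax} forces $t\geq \hat T$. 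A fractional concentration-compactness analysis adapted to the Kirchhoff structure---noting that the limit problem $(a+b[v]_{s,p}^{(\theta-1)p})(-\Delta)_p^s v=v^{p_s^*-1}$ has ground-state energy $H(\hat T)=c_0$---then shows that the critical measure $|u_n|^{p_s^*}dx$ concentrates in at least one atom contributing at least $c_0$ to the limit energy, yielding $c_\varepsilon\geq c_0$ and contradicting Lemma \ref{compare}.

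For the boundedness of $\{z_n\}$, suppose instead $|z_n|\to\infty$. Define $w_n(x):=u_n(x+z_n)$; by translation invariance $\{w_n\}$ is bounded in $\mathcal{W}_\varepsilon$ and, passing to a subsequence, $w_n\rightharpoonup w$ weakly in $W^{s,p}(\mathbb{R}^N)$ with $w_n\to w$ in $L^p_{\rm loc}(\mathbb{R}^N)$ by Lemma \ref{SE}. The non-vanishing established above yields $w\not\equiv 0$. Since $\Lambda$ is bounded and $|z_n|\to\infty$, for any compact $K\subset\mathbb{R}^N$ and all sufficiently large $n$ one has $\varepsilon(K+z_n)\subset \mathbb{R}^N\setminus\Lambda$; passing to the limit in the translated weak formulation (and using weak lower semicontinuity of $[\,\cdot\,]_{s,p}$) shows $w$ is a weak solution of
\[
(a+bM)(-\Delta)_p^s w + V_*(x)\,w^{p-1}=\tilde f(w),
\]
for some $M\geq [w]_{s,p}^{(\theta-1)p}$ and some $V_*\geq V_1$ (an a.e.\ subsequential limit of $V(\varepsilon(\cdot+z_n))$). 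Testing against $w$ and invoking the penalization bound $\tilde f(w)w\leq V_*(x)|w|^p/K$ yields
\[
a[w]_{s,p}^p+\int V_*|w|^p\,dx+bM[w]_{s,p}^p\leq \frac{1}{K}\int V_*|w|^p\,dx.
\]
Since $K>1$ and all four terms on the left are nonnegative, this forces $w\equiv 0$, a contradiction.

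The main obstacle is the first step, specifically implementing the fractional concentration-compactness principle together with the Kirchhoff-modified ground-state calculation of Lemma \ref{kmax}, so as to rule out critical atomic concentration at any level below the compactness threshold $c_0$. Everything else---Lions' vanishing lemma, the subcritical estimates on $f$, the passage to the limit for the translated sequence, and the penalization calculation outside $\Lambda_\varepsilon$---is standard.
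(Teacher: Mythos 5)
The first half of your argument tracks the paper's up to the derivation of $t\ge\hat T$, but then you hand off to an unspecified ``fractional concentration-compactness analysis adapted to the Kirchhoff structure'' to produce an atom contributing at least $c_0$ to the energy. That step, which you flag yourself as the main obstacle, is both unproved and unneeded. It is also somewhat misleading: under Lions' vanishing the critical mass need not concentrate at a point, so no atom of the form used in the concentration-compactness principle is guaranteed. The paper instead finishes with a direct energy computation. Setting $l^{p_s^*}:=\lim_n\int_{\Lambda_\varepsilon\cup\{u_n\le a\}}(u_n^+)^{p_s^*}\,dx$, one derives $l\ge\hat T$ exactly as you do (via the Sobolev inequality and Lemma \ref{kmax}), then writes $c+o_n(1)=J_\varepsilon(u_n)-\frac{1}{\theta p}\langle J'_\varepsilon(u_n),u_n\rangle$, uses the vanishing asymptotics for $G$ and $gu_n$ together with $(G_3)(ii)$, and bounds the result below by $\frac{a}{p}S\hat T^p+\frac{b}{\theta p}S^\theta\hat T^{\theta p}-\frac{1}{p_s^*}\hat T^{p_s^*}=H(\hat T)=c_0$, invoking the identity $aS\hat T^p+bS^\theta\hat T^{\theta p}=\hat T^{p_s^*}$. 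You should replace your vague appeal with this computation, which uses nothing beyond Lemmas \ref{SE}, \ref{kmax} and \ref{compare}.

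Your argument for the boundedness of $\{z_n\}$ is genuinely different from the paper's, but it has a technical flaw that makes it break down as stated. After translating $w_n:=u_n(\cdot+z_n)$ and extracting a weak limit $w\not\equiv0$, you assert that $V(\varepsilon(\cdot+z_n))$ has an a.e.\ subsequential limit $V_*$, and then test the limit equation against $w$. However, $(V_1)$--$(V_2)$ impose no upper bound on $V$ and no behavior at infinity, so $V(\varepsilon(\cdot+z_n))$ need not converge, nor even remain locally bounded, as $|z_n|\to\infty$; the $V$-term in the translated weak formulation therefore cannot be passed to the limit as you claim. The paper avoids this entirely with a cutoff argument: it tests $J_\varepsilon'(u_n)$ against $u_n\eta_R$, where $\eta_R=0$ on $B_R\supset\Lambda_\varepsilon$ and $\eta_R=1$ outside $B_{2R}$, absorbs the $g$-term via the pointwise bound $(G_3)(ii)$ into $\frac{1}{K}\int V(\varepsilon x)|u_n|^p\eta_R$, and controls the nonlocal commutator term by $CR^{-s}$. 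This yields $(1-\frac{1}{K})V_0\int|u_n|^p\eta_R\,dx\le CR^{-s}+o_n(1)$ uniformly in $n$, which, combined with $\int_{B_R(z_n)}|u_n|^p\ge\beta$, directly forces $\{z_n\}$ to be bounded. No weak limit, no passage to a translated equation, and no assumption on the behavior of $V$ at infinity are required. You should either adopt this cutoff argument or strengthen your hypotheses (e.g.\ $V\in L^\infty$) and carefully justify both the $V$-limit and the Kirchhoff coefficient limit in your translated equation.
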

\begin{proof}
By Lemma \ref{bdd}, we know that $\{u_n\}$ is bounded in $\mathcal{W}_\varepsilon$. Then we may assume that $u_n\rightharpoonup u$ in $\mathcal{W}_\varepsilon$. Below we first prove that $\{u_n\}$ is non-vanishing. Otherwise, assume that $\{u_n\}$ is vanishing. Then Lemma \ref{Lion} implies that $u_n\to 0$ in $L^q(\mathbb{R}^N)$ for any $q\in(p,p_s^*)$. From $(F_1)$ and $(F_2)$, it follows that
\begin{equation*}
\int_{\mathbb{R}^N} {F({u_n})} dx=\int_{\mathbb{R}^N} {f({u_n}){u_n}} dx=o_n(1)\ \ \text{as}\ \ n\to\infty.
\end{equation*}
This implies that
\begin{equation}\label{G_un}
\int_{\mathbb{R}^N} G(\varepsilon x,u_n) dx=\frac{1}{{p_s^*}}\int_{{\Lambda _\varepsilon } \cup \{ {u_n} \le a\} } {{{(u_n^ + )}^{p_s^*}}} dx + \frac{{{V_0}}}{{pK}}\int_{(\mathbb{R}^N\setminus\Lambda_\varepsilon)\cap \{ {u_n} > a\}} {|{u_n}{|^p}} dx + {o_n}(1)
\end{equation}
and
\begin{equation}\label{g_un}
\int_{\mathbb{R}^N} g(\varepsilon x,u_n)u_n dx=\int_{{\Lambda _\varepsilon } \cup \{ {u_n} \le a\} } {{{(u_n^ + )}^{p_s^*}}} dx + \frac{{{V_0}}}{{K}}\int_{(\mathbb{R}^N\setminus\Lambda_\varepsilon)\cap \{ {u_n}> a\}} {|{u_n}{|^p}} dx + {o_n}(1).
\end{equation}
Using $\left\langle {{J'_\varepsilon }({u_n}),{u_n}} \right\rangle=o_n(1)$ and (\ref{g_un}) we have
\begin{equation}\label{dJ}
\|u_n\|_\varepsilon^p+b[u_n]_{s,p}^{\theta p}-\frac{{{V_0}}}{{K}}\int_{(\mathbb{R}^N\setminus\Lambda_\varepsilon)\cap \{ {u_n} > a\}} {|{u_n}{|^p}} dx =\int_{{\Lambda _\varepsilon } \cup \{ {u_n} \le a\} } {{{(u_n^ + )}^{p_s^*}}} dx + {o_n}(1).
\end{equation}
Suppose that
\begin{equation*}
 \int_{{\Lambda _\varepsilon } \cup \{ {u_n} \le a\} } {{{(u_n^ + )}^{p_s^*}}} dx\rightarrow l^{p_s^*}.
\end{equation*}
Note that $l>0$, otherwise (\ref{dJ}) yields $\|u_n\|_\varepsilon\rightarrow 0$ as $n\to\infty$ which implies that $J_\varepsilon(u_n)\rightarrow 0$, and this is a contradiction because $c_\varepsilon>0$. Then, by (\ref{dJ}) and Sobolev inequality we obtain
\begin{equation}\label{Sob}
a{S} \left( \int_{{\Lambda _\varepsilon } \cup \{ {u_n} \le a\} } {{{(u_n^ + )}^{p_s^*}}} dx\right)^{\frac{p}{p_s^*}}+ bS^\theta \left( \int_{{\Lambda _\varepsilon } \cup \{ {u_n} \le a\} } {{{(u_n^ + )}^{p_s^*}}} dx\right)^{\frac{\theta p}{p_s^*}}\leq \int_{{\Lambda _\varepsilon } \cup \{ {u_n} \le a\} } {{{(u_n^ + )}^{p_s^*}}} dx+o_n(1).
\end{equation}
Since $l>0$, by (\ref{Sob}), we have
\begin{equation*}
a{S} l^{p}+ bS^\theta l^{\theta p}-l^{p_s^*} \leq 0,
\end{equation*}
which implies $H'(l)\leq 0$ defined in (\ref{dK}), so we can deduce that $l\geq {\hat T}$, where ${\hat T}$ is the unique maximum of $H$ defined in Lemma \ref{kmax}. And we can know that
\begin{equation*}
\hat H({\hat T}) = a{S} + bS^\theta{{\hat T}^{(\theta-1)p}} - {{\hat T}^{p_s^* - p}}=0,
\end{equation*}
which implies
\begin{equation}\label{T1}
 a{S}{\hat T}^p + bS^\theta {{\hat T}^{\theta p}} - {{\hat T}^{p_s^* }}=0.
\end{equation}
Therefore, from (\ref{G_un}), (\ref{g_un}), (\ref{Sob}), $l\geq{\hat T}$ and Sobolev inequality we can infer
\begin{align*}
c+o_n(1)=&J_\varepsilon(u_n)- \frac{{1}}{{\theta p}}\left\langle {{{J'}_\varepsilon }({u_n}),{u_n}} \right\rangle\\
=& {\frac{(\theta-1)a}{\theta p}}[u_n]_{s,p}^p+ \frac{{(\theta-1)}}{{\theta p}}\int_{\mathbb{R}^N} {V(\varepsilon x)|{u_n}|^p} dx+ \frac{{1}}{{\theta p}}\int_{\mathbb{R}^N} g(\varepsilon x,u_n)u_n dx-\int_{\mathbb{R}^N} G(\varepsilon x,u_n)dx\\
\geq& {\frac{(\theta-1)a}{\theta p}}[u_n]_{s,p}^p+ \frac{{(\theta-1)}}{{ \theta p}}\int_{\mathbb{R}^N} {V(\varepsilon x)|{u_n}|^p} dx-\frac{{(\theta-1)V_0}}{{\theta pK}}\int_{\mathbb{R}^N} |{u_n}|^p dx\\
&+\left(\frac{{1}}{{\theta p}}-\frac{1}{{p_s^*}}\right)\int_{{\Lambda _\varepsilon } \cup \{ {u_n} \le a\} } {{{(u_n^ + )}^{p_s^*}}} dx \\
\geq&\frac{(\theta-1)a}{\theta p}{S} \left( \int_{{\Lambda _\varepsilon } \cup \{ {u_n} \le a\} } {{{(u_n^ + )}^{p_s^*}}} dx\right)^{\frac{p}{p_s^*}}\\
&+\left(\frac{{1}}{{\theta p}}-\frac{1}{{p_s^*}}\right)\left(a{S} \left( \int_{{\Lambda _\varepsilon } \cup \{ {u_n} \le a\} } {{{(u_n^ + )}^{p_s^*}}} dx\right)^{\frac{p}{p_s^*}}+ bS^\theta\left( \int_{{\Lambda _\varepsilon } \cup \{ {u_n} \le a\} } {{{(u_n^ + )}^{p_s^*}}} dx\right)^{\frac{\theta p}{p_s^*}}\right)\\
=& \frac{(\theta-1)a}{\theta p}{S} l^{p}+\left(\frac{{1}}{{\theta p}}-\frac{1}{{p_s^*}}\right)\left(a{S} l^{p}+ bS^\theta l^{\theta p}\right)\\
\geq&\frac{(\theta-1)a}{\theta p}{S} {\hat T}^{p}+\left(\frac{{1}}{{\theta p}}-\frac{1}{{p_s^*}}\right)\left(a{S} {\hat T}^{p}+ bS^\theta{\hat T}^{\theta p}\right)\\
=& \frac{a}{p}{S} {\hat T}^{p}+ \frac{b}{\theta p}S^\theta{\hat T}^{\theta p}-\frac{1}{p_s^*} {\hat T}^{p_s^*},
\end{align*}
which contradicts with Lemma \ref{compare}.\

Now, we show that $\{z_n\}$ is bounded in $\mathbb{R}^N$. Let us consider the function $\eta_R\in \mathcal{C}^\infty(\mathbb{R}^N)$ defined as $\eta_R(x)=0$ if $x\in B_R$, $\eta_R(x)=1$ if $x \notin  B_{2R}$, with $0\leq\eta_R\leq1$ and $|\nabla\eta_R|\leq \frac{C}{R}$, where $C$ is a constant independent of $R$. Since $\{\eta_Ru_n\}$ is bounded in $\mathcal{W}_\varepsilon$, $\left\langle {{J'_\varepsilon }({u_n}),{u_n}\eta_R} \right\rangle=o_n(1)$, we get
\begin{align*}
&(a + b[{u_n}]_{s,p}^{(\theta-1)p})\iint_{\mathbb{R}^{2N}}{\frac{{|{u_n}(x) - {u_n}(y){|^p}}}{{|x - y{|^{N + sp}}}}\eta_R(x)}dxdy+\int_{\mathbb{R}^N} {V(\varepsilon x)|{u_n}|^p} \eta_R(x)dx\\
=&o_n(1)+\int_{\mathbb{R}^N} g(\varepsilon x,u_n)u_n\eta_R(x) dx\\
&-(a + b[{u_n}]_{s,p}^{(\theta-1)p})\iint_{\mathbb{R}^{2N}} \frac{{|{u_n}(x) - {u_n}(y){|^{p - 2}}({u_n}(x) - {u_n}(y))({\eta _R}(x) - {\eta _R}(y))}}{{|x - y{|^{N + sp}}}}u_n(y)dxdy.
\end{align*}
Take $R>0$ such that $\Lambda_\varepsilon\subset B_R$. Then, using $(G_3)$-$(ii)$, we get
\begin{align}\label{*1}
&\iint_{\mathbb{R}^{2N}}{a\frac{{|{u_n}(x) - {u_n}(y){|^p}}}{{|x - y{|^{N+ sp}}}}\eta_R(x)}dxdy+\int_{\mathbb{R}^N} {V(\varepsilon x)|{u_n}|^p} \eta_R(x)dx\nonumber\\
\leq &o_n(1)+\int_{\mathbb{R}^N} {\frac{V(\varepsilon x)}{K}|{u_n}|^p \eta_R(x)} dx\nonumber\\
&-(a + b[{u_n}]_{s,p}^{(\theta-1)p})\iint_{\mathbb{R}^{2N}} \frac{{|{u_n}(x) - {u_n}(y){|^{p - 2}}({u_n}(x)- {u_n}(y))({\eta _R}(x) - {\eta _R}(y))}}{{|x - y{|^{N + sp}}}}u_n(y)dxdy,
\end{align}
which implies that
\begin{align*}
&(1-\frac{1}{K})V_0\int_{\mathbb{R}^N} |{{u_n}|^p} \eta_R(x)dx\\
\leq &o_n(1)-(a + b[{u_n}]_{s,p}^{(\theta-1)p})\iint_{\mathbb{R}^{2N}} \frac{{|{u_n}(x) - {u_n}(y){|^{p - 2}}({u_n}(x) - {u_n}(y))({\eta _R}(x) - {\eta _R}(y))}}{{|x - y{|^{N + sp}}}}u_n(y)dxdy.
\end{align*}
Exploiting the H\"{o}lder inequality and the boundedness on $\{u_n\}$ in $\mathcal{W}_\varepsilon$ we have that
\begin{align}\label{*2}
&\left|\iint_{\mathbb{R}^{2N}} \frac{{|{u_n}(x) - {u_n}(y){|^{p - 2}}({u_n}(x) - {u_n}(y))({\eta _R}(x) - {\eta _R}(y))}}{{|x - y{|^{N + sp}}}}u_n(y)dxdy\right|\nonumber\\
\leq &C\left(\iint_{\mathbb{R}^{2N}} \frac{{|{\eta _R}(x) - {\eta _R}(y){|^{p }}}}{{|x - y{|^{N + sp}}}}|u_n(x)|^pdxdy\right)^{\frac{1}{p}}.
\end{align}
Now, we show that
\begin{equation*}
\mathop {\lim }\limits_{R \to \infty } \mathop {\lim \sup }\limits_{n \to \infty } \iint_{\mathbb{R}^{2N}} {\frac{{|{\eta _R}(x) - {\eta _R}(y){|^p}}}{{|x - y{|^{N + sp}}}}|{u_n}(x){|^p}dxdy}=0.
\end{equation*}
In fact, recalling that $0\leq\eta_R\leq1$ and $|\nabla\eta_R|\leq \frac{C}{R}$, and using polar coordinates, we obtain
\begin{align}\label{*3}
&\int_{\mathbb{R}^N} {\frac{{|{\eta _R}(x) - {\eta _R}(y){|^p}}}{{|x - y{|^{N + sp}}}}|{u_n}(x){|^p}dxdy}\nonumber\\
= &\int_{\mathbb{R}^N} {\int_{|y - x| > R} {\frac{{|{\eta _R}(x) - {\eta _R}(y){|^p}}}{{|x - y{|^{N + sp}}}}|{u_n}(x){|^p}dxdy + } } \int_{\mathbb{R}^N} {\int_{|y - x| \le R} {\frac{{|{\eta _R}(x) - {\eta _R}(y){|^p}}}{{|x - y{|^{N + sp}}}}|{u_n}(x){|^p}dxdy} }\nonumber\\
\leq&C \int_{\mathbb{R}^N} {|{u_n}(x){|^p}\left(\int_{|y - x| > R} \frac{{dy}}{{|x - y{|^{N + sp}}}}\right) } dx+ \frac{C}{R^p}\int_{\mathbb{R}^N} |{u_n}(x){|^p}\left(\int_{|y - x| \leq R} \frac{{dy}}{{|x - y{|^{N + sp-p}}}}\right) dx\nonumber\\
\leq&C \int_{\mathbb{R}^N} {|{u_n}(x){|^p}\left(\int_{|z| > R} \frac{{dy}}{{|z{|^{N + sp}}}}\right)}  dx+ \frac{C}{R^p}\int_{\mathbb{R}^N} {|{u_n}(x){|^p}\left(\int_{|z| \leq R} \frac{{dy}}{{|z{|^{N + sp-p}}}}\right)} dx\nonumber\\
\leq&C \int_{\mathbb{R}^N} {|{u_n}(x){|^p}\left(\int_R^\infty  {\frac{{{\rho ^{N-1}}}}{{{\rho ^{N + sp}}}}} d\rho \right)}  dx+ \frac{C}{R^p}\int_{\mathbb{R}^N} {|{u_n}(x){|^p}\left(\int_0^R  {\frac{{{\rho ^{N-1}}}}{{{\rho ^{N + sp-p}}}}} d\rho \right)}  dx\nonumber\\
\leq&\frac{C}{{{R^{sp}}}}\int_{\mathbb{R}^N} |{u_n}(x)|^pdx+\frac{C}{{{R^{p}}}}R^{p-sp}\int_{\mathbb{R}^N} |{u_n}(x)|^pdx\nonumber\\
\leq&\frac{C}{{{R^{sp}}}}\int_{\mathbb{R}^N} |{u_n}(x){|^p}dx\leq\frac{C}{{{R^{sp}}}},
\end{align}
where in the last passage we used the boundedness of $\{u_n\}$ in $\mathcal{W}_\varepsilon$. From the above estimate we can deduce that
\begin{align*}
(1-\frac{1}{K})V_0\int_{\mathbb{R}^N} {|{u_n}|^p} \eta_R(x)dx\leq \frac{C}{{{R^{s}}}}+o_n(1),
\end{align*}
which implies that $\{z_n\}$ is bounded in $\mathbb{R}^3$.
\end{proof}

\begin{lemma}\label{conv}
Let $\{u_n\}$ be a $(PS)_c$ sequence for ${J_\varepsilon }$, then for each $\zeta>0$, there is a number $R_1=R(\zeta)>0$ such that
\begin{equation*}
\mathop {\lim \sup }\limits_{n \to \infty } \int_{\mathbb{R}^N\setminus{B_{{R_1}}}(0)} \left(a\int_{\mathbb{R}^N} {\frac{{|{u_n}(x) - {u_n}(y){|^p}}}{{|x - y{|^{N + sp}}}}}  dy + V(\varepsilon x)u_n^p\right)dx <\zeta.
\end{equation*}
\end{lemma}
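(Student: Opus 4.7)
The plan is to mimic the localization argument used in the second half of Lemma \ref{nonv} (culminating in (\ref{*1})--(\ref{*3})), now with the roles of $B_R$ and its complement reversed: instead of testing against $\eta_R$ to show boundedness of $\{z_n\}$, I will use the same cutoff to capture the tail energy. Concretely, fix $R>0$ large enough that $\Lambda_\varepsilon\subset B_R$ and choose $\eta_R\in \mathcal{C}^\infty(\mathbb{R}^N)$ with $\eta_R\equiv 0$ on $B_R$, $\eta_R\equiv 1$ on $\mathbb{R}^N\setminus B_{2R}$, $0\le\eta_R\le 1$, and $|\nabla\eta_R|\le C/R$. Since $\{u_n\}$ is bounded in $\mathcal{W}_\varepsilon$ by Lemma \ref{bdd}, so is $\{u_n\eta_R\}$, hence $\langle J'_\varepsilon(u_n), u_n\eta_R\rangle = o_n(1)$ is a valid test.

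The main algebraic step is to expand this pairing via the identity $u_n(x)\eta_R(x)-u_n(y)\eta_R(y) = (u_n(x)-u_n(y))\eta_R(x) + u_n(y)(\eta_R(x)-\eta_R(y))$ and isolate on the left the two nonnegative quantities $a\iint\frac{|u_n(x)-u_n(y)|^p}{|x-y|^{N+sp}}\eta_R(x)\,dx\,dy$ and $\int V(\varepsilon x)|u_n|^p\eta_R\,dx$ (dropping the nonnegative Kirchhoff contribution with factor $b[u_n]_{s,p}^{(\theta-1)p}$, exactly as in (\ref{*1})). Because $\mathrm{supp}\,\eta_R\subset\mathbb{R}^N\setminus\Lambda_\varepsilon$, the penalized nonlinearity satisfies $g(\varepsilon x,u_n)u_n\le V(\varepsilon x)|u_n|^p/K$ on $\mathrm{supp}\,\eta_R$ (by the construction of $\tilde f$ and the choice of $a$), so this contribution can be absorbed into the $V$-term on the left, producing the beneficial factor $1-1/K>0$.

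What remains on the right is the nonlocal commutator
\[
\mathcal{R}_n := \bigl(a+b[u_n]_{s,p}^{(\theta-1)p}\bigr)\iint_{\mathbb{R}^{2N}}\frac{|u_n(x)-u_n(y)|^{p-2}(u_n(x)-u_n(y))(\eta_R(x)-\eta_R(y))}{|x-y|^{N+sp}}\,u_n(y)\,dx\,dy.
\]
The Kirchhoff prefactor $a+b[u_n]_{s,p}^{(\theta-1)p}$ is uniformly bounded by Lemma \ref{bdd}. Applying H\"older's inequality in the $p'$--$p$ duality controls $|\mathcal{R}_n|$ by a constant times $[u_n]_{s,p}^{p-1}\bigl(\iint\tfrac{|\eta_R(x)-\eta_R(y)|^p|u_n(y)|^p}{|x-y|^{N+sp}}\,dx\,dy\bigr)^{1/p}$, and the inner double integral is exactly the quantity bounded by $C/R^{sp}$ in (\ref{*3}). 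Hence $|\mathcal{R}_n|\le C/R^{s}$ uniformly in $n$.

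Combining the above and using $\eta_R\equiv 1$ on $\mathbb{R}^N\setminus B_{2R}$, we obtain for all sufficiently large $n$
\[
\int_{\mathbb{R}^N\setminus B_{2R}}\!\Bigl(a\int_{\mathbb{R}^N}\frac{|u_n(x)-u_n(y)|^p}{|x-y|^{N+sp}}\,dy + V(\varepsilon x)|u_n|^p\Bigr)dx \le \frac{CK}{(K-1)R^{s}} + o_n(1).
\]
Choosing $R$ large enough that $CK/[(K-1)R^s]<\zeta/2$, setting $R_1:=2R$, and passing to $\limsup_{n\to\infty}$ yields the claim. The only point requiring verification is that the commutator bound (\ref{*3}) transfers to the Kirchhoff regime; this is immediate from the uniform control of $[u_n]_{s,p}$, so no genuinely new obstacle arises from $b\neq 0$ or $\theta>1$.
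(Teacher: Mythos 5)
Your argument is correct and follows the paper's own proof essentially step for step: the same cut-off $\eta_R$ with $\Lambda_\varepsilon\subset B_R$, testing $J'_\varepsilon(u_n)$ against $u_n\eta_R$, absorbing $g(\varepsilon x,u_n)u_n$ via the penalization bound on $\mathbb{R}^N\setminus\Lambda_\varepsilon$ to produce the factor $1-1/K$, and controlling the nonlocal commutator by H\"older together with the $C/R^{sp}$ estimate from (\ref{*3}). The paper simply points back to (\ref{*1}), (\ref{*2}), (\ref{*3}) for these computations rather than redisplaying them; your version makes the same steps explicit, including the harmless observation that the Kirchhoff prefactor $a+b[u_n]_{s,p}^{(\theta-1)p}$ is uniformly bounded.
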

\begin{proof}
Let $\eta_R\in\mathcal{C}^\infty(\mathbb{R}^N)$ be a cut-off function such that $\eta_R(x)=0$ in $B_R$ and $\eta_R(x)=1$ in $\mathbb{R}^N\setminus B_{2R}$, with $0\leq\eta_R(x)\leq1$, $|\nabla\eta_R|\leq \frac{C}{R}$ and $C$ is a constant independent on $R$. Take $R>0$ such that $\Lambda_\varepsilon\subset B_R(0)$. In view of Lemma \ref{bdd}, $\{u_n\}$ is bounded. From $\left\langle {{J'_\varepsilon }({u_n}),{u_n}\eta_R} \right\rangle=o_n(1)$ and (\ref{*1}), we have
\begin{align}\label{*4}
&\iint_{\mathbb{R}^{2N}}{a\frac{{|{u_n}(x) - {u_n}(y){|^p}}}{{|x - y{|^{N + sp}}}}\eta_R(x)}dxdy+\left(1-\frac{1}{K}\right)\int_{\mathbb{R}^N} {V(\varepsilon x)|{u_n}|^p} \eta_R(x)dx\nonumber\\
\leq &o_n(1)-(a + b[{u_n}]_{s,p}^{(\theta-1)p})\iint_{\mathbb{R}^{2N}} \frac{{|{u_n}(x) - {u_n}(y){|^{p - 2}}({u_n}(x)- {u_n}(y))({\eta _R}(x) - {\eta _R}(y))}}{{|x - y{|^{N + sp}}}}u_n(y)dxdy.
\end{align}
Gathering (\ref{*2}), (\ref{*3}) and (\ref{*4}) we infer that
\begin{equation*}
\mathop {\lim \sup }\limits_{n \to \infty } \int_{\mathbb{R}^{N}\setminus{B_{{R_1}}}(0)} \left(a\int_{\mathbb{R}^N} {\frac{{|{u_n}(x) - {u_n}(y){|^p}}}{{|x - y{|^{N + sp}}}}}  dy + V(\varepsilon x)u_n^p\right)dx <\zeta.
\end{equation*}
\end{proof}

\begin{lemma}\label{conv1}
Let $\{u_n\}$ be a $(PS)_c$ sequence for ${J_\varepsilon }$ such that $u_n\rightharpoonup u$ in ${\mathcal{W}}_\varepsilon$, then
\begin{align*}
&\mathop {\lim }\limits_{n \to \infty } \int_{{B_{{R}}}(0)} \left(a\int_{\mathbb{R}^N} {\frac{{|{u_n}(x) - {u_n}(y){|^p}}}{{|x - y{|^{N + sp}}}}}  dy + V(\varepsilon x)u_n^p\right)dx \\
=&\int_{{B_{{R}}}(0)} \left(a\int_{\mathbb{R}^N} {\frac{{|{u}(x) - {u}(y){|^p}}}{{|x - y{|^{N + sp}}}}}  dy + V(\varepsilon x)u^p\right)dx.
\end{align*}
\end{lemma}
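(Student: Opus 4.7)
The plan is to decompose the integral into its $V$-weighted $L^p$ mass and its fractional-seminorm contribution, treating each separately. By the compact embedding $W^{s,p}(\mathbb R^N)\hookrightarrow L^p_{\mathrm{loc}}(\mathbb R^N)$ of Lemma \ref{SE}, the weak convergence $u_n\rightharpoonup u$ in $\mathcal W_\varepsilon$, and the boundedness of $V$, one immediately gets $\int_{B_R} V(\varepsilon x)|u_n|^p\,dx\to \int_{B_R} V(\varepsilon x)|u|^p\,dx$. For the fractional-seminorm part, set $v_n:=u_n-u$, so that $v_n\rightharpoonup 0$ in $\mathcal W_\varepsilon$, $v_n\to 0$ a.e.\ and in $L^q_{\mathrm{loc}}$ for every $q\in[1,p_s^*)$. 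The Brezis--Lieb lemma applied to the a.e.\ convergent, $L^p(B_R\times\mathbb R^N)$-bounded family $(u_n(x)-u_n(y))|x-y|^{-(N+sp)/p}$ reduces the claim to
$$\iint_{B_R\times\mathbb R^N}\frac{|v_n(x)-v_n(y)|^p}{|x-y|^{N+sp}}\,dx\,dy\;\longrightarrow\;0.$$

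To establish this I would fix a smooth cutoff $\phi_R\in C^\infty_c(\mathbb R^N)$ with $\phi_R\equiv 1$ on $B_R$, $\phi_R\equiv 0$ outside $B_{2R}$, $0\le\phi_R\le 1$, and exploit that $\{\phi_R v_n\}$ is bounded in $\mathcal W_\varepsilon$, so the $(PS)_c$-property gives $\langle J'_\varepsilon(u_n),\phi_R v_n\rangle=o(1)$. Expanding via
$$(\phi_R v_n)(x)-(\phi_R v_n)(y)=\phi_R(x)(v_n(x)-v_n(y))+(\phi_R(x)-\phi_R(y))v_n(y),$$
the commutator involving $\phi_R(x)-\phi_R(y)$ is $o(1)$ by a H\"older inequality together with the same kernel bound used in the proof of Lemma \ref{nonv} (since $\phi_R$ is Lipschitz with compact support and $v_n\to 0$ in $L^p_{\mathrm{loc}}$); the weighted $L^p$ piece $\int V(\varepsilon x)|u_n|^{p-2}u_n\phi_R v_n\,dx$ is $o(1)$ by compact embedding; and the subcritical parts of $\int g(\varepsilon x,u_n)\phi_R v_n\,dx$ vanish by Vitali's theorem via $(F_1)$--$(F_2)$. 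Subtracting the companion expression obtained by replacing $u_n$ with $u$ inside the outer $p$-Laplace kernel (which is $o(1)$ because $v_n\rightharpoonup 0$ tests against the fixed $L^{p/(p-1)}(\mathbb R^{2N})$-function $\phi_R(x)|u(x)-u(y)|^{p-2}(u(x)-u(y))|x-y|^{-(N+sp)(p-1)/p}$) and dividing by the Kirchhoff coefficient $a+b[u_n]_{s,p}^{(\theta-1)p}$, which is uniformly bounded by Lemma \ref{bdd} and bounded below by $a>0$, leaves the identity
$$Q_n:=\iint\phi_R(x)\,\frac{\bigl(|u_n(x)-u_n(y)|^{p-2}(u_n(x)-u_n(y))-|u(x)-u(y)|^{p-2}(u(x)-u(y))\bigr)(v_n(x)-v_n(y))}{|x-y|^{N+sp}}\,dx\,dy=o(1).$$
Simon's inequality (Lemma \ref{Simon}) then closes the argument: for $p\ge 2$ the integrand of $Q_n$ pointwise dominates (up to $c_p$) $\phi_R(x)|v_n(x)-v_n(y)|^p/|x-y|^{N+sp}$, while for $1<p<2$ a H\"older inequality with exponent $2/p$ together with the uniform bound on $\iint\phi_R(x)(|u_n(x)-u_n(y)|^p+|u(x)-u(y)|^p)/|x-y|^{N+sp}\,dx\,dy$ yields the analogous estimate. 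Since $\phi_R\equiv 1$ on $B_R$, this produces the desired vanishing of $\iint_{B_R\times\mathbb R^N}|v_n(x)-v_n(y)|^p/|x-y|^{N+sp}\,dx\,dy$, and the Brezis--Lieb reduction of the first paragraph then finishes the proof.

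The hardest step is controlling the critical integral $\int_{\Lambda_\varepsilon}u_n^{p_s^*-1}\phi_R v_n\,dx$ appearing inside $\int g(\varepsilon x,u_n)\phi_R v_n\,dx$: since $v_n\not\to 0$ in $L^{p_s^*}_{\mathrm{loc}}$ under possible critical concentration, the straightforward subcritical compactness argument fails and one cannot conclude directly that this piece vanishes. I would handle it by rewriting the integral as $\int\phi_R u_n^{p_s^*}\,dx-\int\phi_R u\,u_n^{p_s^*-1}\,dx$, invoking the weak convergence $u_n^{p_s^*-1}\rightharpoonup u^{p_s^*-1}$ in $L^{(p_s^*)'}(\mathbb R^N)$ (a consequence of a.e.\ convergence together with $L^{p_s^*}$-boundedness), and matching the resulting critical concentration masses at atoms with the corresponding atoms of the fractional-seminorm defect through the concentration-compactness principle for the fractional $p$-Laplacian; the boundedness of the Kirchhoff coefficient both above and away from zero is what allows these cancellations to be carried out in the end.
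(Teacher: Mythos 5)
Your plan reproduces the core mechanism of the paper's argument: test $J'_\varepsilon(u_n)$ against a localized version of $v_n=u_n-u$, isolate the monotone quantity involving the difference of $p$-Laplace tensors, and convert it to the local fractional seminorm of $v_n$ via Simon's inequality. The cosmetic differences — your Brezis--Lieb reduction versus the paper's direct triangle-inequality passage from $\|v_n\|\to 0$ to $\|u_n\|\to\|u\|$, your cutoff supported near $B_R$ versus the paper's (apparently typo-ridden) $\eta_\rho$, your single localized test function $\phi_R v_n$ versus the paper's fourfold decomposition $I^1_{n,\rho},\dots,I^4_{n,\rho}$ — are all fine. The commutator and weak-limit estimates you sketch are correct, though the reference to Lemma~\ref{nonv} is slightly off: there the kernel bound $\iint|\eta_R(x)-\eta_R(y)|^p|x-y|^{-N-sp}|u_n(x)|^p\,dx\,dy\lesssim R^{-sp}$ is used as $R\to\infty$, whereas you need vanishing as $n\to\infty$ at fixed $R$, which instead relies on $v_n\to 0$ in $L^p_{\mathrm{loc}}$ together with the $L^1$-integrability and decay of $y\mapsto\int|\phi_R(x)-\phi_R(y)|^p|x-y|^{-N-sp}\,dx$. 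You do mention the $L^p_{\mathrm{loc}}$ compactness, so this is just a matter of attributing the right ingredient.

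The genuine gap is exactly where you flag the "hardest step." You correctly observe that $\int_{\Lambda_\varepsilon}u_n^{p_s^*-1}\phi_R v_n\,dx\to\sum_i\nu_i\phi_R(x_i)$ after the weak-$*$ and dual-exponent computations, but your proposed resolution — "matching" the $\nu_i$ atoms against the seminorm-defect atoms $\mu_i$ and appealing to the Kirchhoff coefficient bounds — does not close the argument. Those atoms cannot be cancelled; if any $\nu_i>0$ at $x_i\in\Lambda_\varepsilon$, then $Q_n\to\sum\nu_i\phi_R(x_i)>0$, and via Simon's inequality the localized fractional seminorm of $v_n$ does \emph{not} vanish, so the lemma's conclusion is simply false. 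The atoms must be \emph{ruled out}, not balanced. This is where the energy threshold enters, and you never invoke it: the paper (which implicitly assumes $c<c_0=H(\hat T)$ here, as is made explicit only in Lemma~\ref{conv2}) derives from the CC relations $a\mu_i+b\mu_i^\theta\le\nu_i$ and $\mu_i\ge S\nu_i^{p/p_s^*}$ that $\nu_i^{1/p_s^*}\ge\hat T$, and then substitutes into $c=\lim\bigl[J_\varepsilon(u_n)-\tfrac{1}{\theta p}\langle J'_\varepsilon(u_n),u_n\rangle\bigr]$ to obtain $c\ge H(\hat T)$, a contradiction. Without $c<c_0$ the statement cannot hold, and without the contradiction argument the critical term remains uncontrolled. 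So: the overall architecture you propose is sound and close to the paper's, but the pivotal step — excluding concentration at interior atoms — needs the quantitative lower bound on $\nu_i$ and the energy-level hypothesis, neither of which appears in your outline.
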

\begin{proof}
We define the cut-off function $\eta_\rho\in \mathcal{C}^\infty(\mathbb{R}^N)$ such that $\eta_\rho(x)=0$ in $B_\rho$ and $\eta_\rho(x)=1$ in $\mathbb{R}^N\setminus B_{2\rho}$, with $0\leq\eta_\rho(x)\leq1$. Set
\begin{align*}
{P_n}(x): &= {M_n}\int_{\mathbb{R}^N} {\frac{\left[|u_n(x)-{u_n}(y)|^{p-2}(u_n(x)-{u_n}(y))-|u(x)-u(y)|^{p-2}(u(x)-u(y))\right]}{{|x - y{|^{N + sp}}}}}\\
 &\times{[({u_n}(x) - u(x)) - ({u_n}(y) - u(y))]}dy + V(\varepsilon x)\left(|u_n|^{p-2}u_n-|u|^{p-2}u\right)(u_n-u),
\end{align*}
where $M_n:=a+b[u_n]_{s,p}^{(\theta-1)p}$.\

Fix $R>0$ and choose $R<\rho$. We have
\begin{align*}
&{M_n}\iint_{\mathbb{R}^{2N}} {\frac{\left[|u_n(x)-{u_n}(y)|^{p-2}(u_n(x)-{u_n}(y))-|u(x)-u(y)|^{p-2}(u(x)-u(y))\right]}{{|x - y{|^{N + sp}}}}}\\
 &\times{[({u_n}(x) - u(x)) - ({u_n}(y) - u(y))]}\eta_\rho dxdy + \int_{\mathbb{R}^N} V(\varepsilon x)\left(|u_n|^{p-2}u_n-|u|^{p-2}u\right)(u_n-u)\eta_\rho dx\\
=&{M_n}\iint_{\mathbb{R}^{2N}} {\frac{{|{u_n}(x) -{u_n}(y){|^p}}}{{|x - y{|^{N + sp}}}}}\eta_\rho(x) dxdy+ \int_{\mathbb{R}^N}V(\varepsilon x)|{u_n}{|^p}\eta_\rho dx\\
&+{M_n}\iint_{\mathbb{R}^{2N}} {\frac{{|{u}(x) -{u}(y){|^p}}}{{|x - y{|^{N + sp}}}}}\eta_\rho(x) dxdy+ \int_{\mathbb{R}^N}V(\varepsilon x)|{u}{|^p}\eta_\rho dx\\
&-\left[\left({M_n}\iint_{\mathbb{R}^{2N}} {\frac{{|{u_n}(x) -{u_n}(y){|^{p-2}}}(u_n(x)-u_n(y))(u(x)-u(y))}{{|x - y{|^{N + sp}}}}}\eta_\rho(x) dxdy+ \int_{\mathbb{R}^N}V(\varepsilon x)|{u_n}{|^{p-2}}{u_n}u\eta_\rho dx\right)\right.\\
&\left.-\left({M_n}\iint_{\mathbb{R}^{2N}} {\frac{{|{u}(x) -{u}(y){|^{p-2}}}(u(x)-u(y))(u_n(x)-u_n(y))}{{|x - y{|^{N + sp}}}}} \eta_\rho(x) dxdy +\int_{\mathbb{R}^N}V(\varepsilon x)|{u}{|^{p-2}}u u_n\eta_\rho dx\right)\right].\\
\end{align*}
Setting
\begin{align*}
&I_{n,\rho}^1:={M_n}\iint_{\mathbb{R}^{2N}} {\frac{{|{u_n}(x) -{u_n}(y){|^p}}}{{|x - y{|^{N + sp}}}}}\eta_\rho(x) dxdy+ \int_{\mathbb{R}^N}V(\varepsilon x)|{u_n}{|^p}\eta_\rho dx-\int_{\mathbb{R}^N}g(\varepsilon x,u_n)u_n\eta_\rho dx.\\
&I_{n,\rho}^2:={M_n}\iint_{\mathbb{R}^{2N}} {\frac{{|{u_n}(x) -{u_n}(y){|^{p-2}}}(u_n(x)-u_n(y))(u(x)-u(y))}{{|x - y{|^{N + sp}}}}}\eta_\rho(x) dxdy\\
&\qquad\qquad\qquad+\int_{\mathbb{R}^N}V(\varepsilon x)|{u_n}{|^{p-2}}{u_n}u\eta_\rho dx-\int_{\mathbb{R}^N}g(\varepsilon x,u_n)u\eta_\rho dx.\\
&I_{n,\rho}^3:=\left({M_n}\iint_{\mathbb{R}^{2N}} {\frac{|u(x)-u(y)|^p}{{|x - y{|^{N + sp}}}}}\eta_\rho(x) dxdy+ \int_{\mathbb{R}^N}V(\varepsilon x)|u|^p\eta_\rho dx\right)\\
&-\left({M_n}\iint_{\mathbb{R}^{2N}} {\frac{{|{u}(x) -{u}(y){|^{p-2}}}(u(x)-u(y))(u_n(x)-u_n(y))}{{|x - y{|^{N + sp}}}}} \eta_\rho(x) dxdy +\int_{\mathbb{R}^N}V(\varepsilon x)|{u}{|^{p-2}}u u_n\eta_\rho dx\right)\\
&I_{n,\rho}^4:=\int_{\mathbb{R}^N}g(\varepsilon x,u_n)(u_n-u)\eta_\rho dx,
\end{align*}
obviously, we have
\begin{align*}
0\leq\int_{B_R(0)}{P_n}(x)dx\leq |I_{n,\rho}^1|+|I_{n,\rho}^2|+|I_{n,\rho}^3|+|I_{n,\rho}^4|.
\end{align*}
Next, we have to prove that
\begin{align*}
\mathop {\lim }\limits_{\rho  \to \infty } [\mathop {\lim \sup }\limits_{n \to \infty } |I_{n,\rho}^1|]=0,\qquad\qquad\mathop {\lim }\limits_{\rho  \to \infty } [\mathop {\lim \sup }\limits_{n \to \infty } |I_{n,\rho}^2|]=0
\end{align*}
and
\begin{align*}
\mathop {\lim }\limits_{\rho  \to \infty } [\mathop {\lim \sup }\limits_{n \to \infty } |I_{n,\rho}^3|]=0,\qquad\qquad\mathop {\lim }\limits_{\rho  \to \infty } [\mathop {\lim \sup }\limits_{n \to \infty } |I_{n,\rho}^4|]=0.
\end{align*}
Observe that
\begin{align*}
I_{n,\rho}^1=J'_\varepsilon(u_n)(u_n\eta_\rho)-(a+b[u_n]_{s,p}^{(\theta-1)p})\iint_{\mathbb{R}^{2N}} {\frac{{|{u_n}(x) -{u_n}(y){|^{p-2}}}(u_n(x)-u_n(y))(\eta_\rho(x)-\eta_\rho(y))}{{|x - y{|^{N + sp}}}}}u_n(x) dxdy
\end{align*}
Since $\{u_n\eta_\rho\}$ bounded in $W^{s,p}(\mathbb{R}^N)$, we have $\left\langle {{J'_\varepsilon }({u_n}),{u_n}\eta_R} \right\rangle=o_n(1)$. Moreover, by H\"{o}lder inequality and the proof of (\ref{*3}), one has
\begin{align*}
&(a+b[u_n]_{s,p}^{(\theta-1)p})\iint_{\mathbb{R}^{2N}} {\frac{{|{u_n}(x) -{u_n}(y){|^{p-2}}}(u_n(x)-u_n(y))(\eta_\rho(x)-\eta_\rho(y))}{{|x - y{|^{N + sp}}}}}u_n(x) dxdy\\
\leq& C_1\left(\iint_{\mathbb{R}^{2N}} \frac{{|{\eta _\rho}(x) - {\eta _\rho}(y){|^{p }}}}{{|x - y{|^{N + sp}}}}|u_n(x)|^pdxdy\right)^{\frac{1}{p}}\left(\iint_{\mathbb{R}^{2N}} \frac{{|u_n(x) - u_n(y){|^{p }}}}{{|x - y{|^{N+ sp}}}}dxdy\right)^{\frac{p-1}{p}}\\
\leq&C_2\left(\iint_{\mathbb{R}^{2N}} \frac{{|(1-{\eta _\rho}(x)) - (1-{\eta _\rho}(y)){|^{p }}}}{{|x - y{|^{N + sp}}}}|u_n(x)|^pdxdy\right)^{\frac{1}{p}}\rightarrow 0
\end{align*}
as $\rho\to\infty$, $n\to\infty$. Then,
\begin{align*}
\mathop {\lim }\limits_{\rho  \to \infty } [\mathop {\lim \sup }\limits_{n \to \infty } |I_{n,\rho}^1|]=0.
\end{align*}
We also have that
\begin{align*}
I_{n,\rho}^2=J'_\varepsilon(u_n)(u\eta_\rho)-(a+b[u_n]_{s,p}^{(\theta-1)p})\iint_{\mathbb{R}^{2N}} {\frac{{|{u_n}(x) -{u_n}(y){|^{p-2}}}(u_n(x)-u_n(y))(\eta_\rho(x)-\eta_\rho(y))}{{|x - y{|^{N + sp}}}}}u(x) dxdy.
\end{align*}
Using $\left\langle {{J'_\varepsilon }({u_n}),{u}\eta_R} \right\rangle=o_n(1)$ and arguing as in the previous case, we can prove the second term of the right side of the last equality tends to zero, as $n, \rho\to\infty$. Consequently,
\begin{align*}
\mathop {\lim }\limits_{\rho  \to \infty } [\mathop {\lim \sup }\limits_{n \to \infty } |I_{n,\rho}^2|]=0.
\end{align*}
On the other hand, from the weak convergence,
\begin{align*}
\mathop {\lim }\limits_{\rho  \to \infty } |I_{n,\rho}^3|=&\mathop {\lim }\limits_{\rho  \to \infty }\left|\left({M_n}\iint_{\mathbb{R}^{2N}} {\frac{|u(x)-u(y)|^{p-2}(u(x)-u(y))[(u_n(x)-u_n(y))-(u(x)-u(y))]}{{|x - y{|^{N + sp}}}}}\eta_\rho(x) dxdy\right.\right.\\
&\left.\left.+ \int_{\mathbb{R}^N}V(\varepsilon x)|u|^{p-2}u(u_n-u)\eta_\rho dx\right)\right|=0,
\end{align*}
where we used the fact that $M_n$ is bounded in $\mathbb{R}$.
Below we shall prove that
\begin{align*}
\mathop {\lim }\limits_{\rho  \to \infty } [\mathop {\lim \sup }\limits_{n \to \infty } |I_{n,\rho}^4|]=0.
\end{align*}
We first claim that
\begin{align*}
{u_n} \to u\ \ \text{in}\ \  L^{p_s^*}(\Lambda_\varepsilon).
\end{align*}
Indeed, we may suppose that $|D^su_n|^p\rightharpoonup \mu$ and $|u_n|^{p_s^*}\rightharpoonup\nu$, where $|D^su_n|^p=\int_{\mathbb{R}^N} \frac{|u(x)-u(y)|^p}{|x-y|^{N+sp}}dy$, $\mu$ and $\nu$ are bounded nonnegative measures in ${\mathbb{R}^N}$. By a variant of the concentration-compactness lemma \cite{Lion1}, which is proved by {{\cite[Lemma 2.4]{VA}}}, we have an at most countable index set $\Gamma$ and sequence $\{x_i\}\subset{\mathbb{R}^N}$, $\{\mu_i\}$, $\{\nu_i\}\subset (0,\infty)$ such that
\begin{align}\label{ccl}
\mu  \ge |{D^s}u{|^p} + \sum\limits_{i \in \Gamma} {{\mu _i}} {\delta _{{x_i}}},\ \ \nu  = |u{|^{p_s^*}} + \sum\limits_{i \in \Gamma } {{\nu _i}} {\delta _{{x_i}}},\ \ {\mu _i} \ge {S}{\nu _i}^{\frac{p}{{p_s^*}}}
\end{align}
for all $i\in \Gamma$, where ${\delta _{{x_i}}}$ is the Dirac mass at $x_i\in{\mathbb{R}^N}$. It suffices to show that $\{x_i\}_{i\in\Gamma}\cap\Lambda_\varepsilon=\emptyset$. Suppose, by contradiction, that ${x_i}\in \Lambda_\varepsilon$  for some $i\in\Gamma$. Define $\varphi_\rho(x)=\varphi\left(\frac{x-x_i}{\rho}\right)$, where $\varphi=1$ if $x\in B_1(0)$, $\varphi=0$ if $x \notin B_2(0)$, and $|\nabla\varphi|_\infty\leq2$. We suppose that $\rho$ is chosen in such way that the support of $\varphi_\rho$ is contained in $\Lambda_\varepsilon$. Since $\{\varphi_\rho u_n\}$ is bounded , $\left\langle {{J'_\varepsilon }({u_n}),{u_n}\varphi_\rho} \right\rangle=o_n(1)$, we have
\begin{align*}
&(a+b[u_n]_{s,p}^{(\theta-1)p})\int_{\mathbb{R}^N}(-\Delta)_p^su_n(\varphi_\rho u_n)dx+\int_{\mathbb{R}^N}V(\varepsilon x)u_n^p\varphi_\rho dx\\
=&\int_{\mathbb{R}^N} g(\varepsilon x,u_n)u_n\varphi_\rho dx+o_n(1)\\
=&\int_{\mathbb{R}^N}f(u_n)u_n\varphi_\rho dx+\int_{\mathbb{R}^N} u_n^{p_s^*}\varphi_\rho  dx+o_n(1).
\end{align*}
Since $\varphi_\rho$ has compact support, letting $n\to\infty$ and $\rho\to 0$ we obtain that $a{\mu _i} + b\mu _i^\theta \le {\nu _i}$, which combining with inequality that $ {\mu _i} \ge {S}{\nu _i}^{\frac{p}{{p_s^*}}}$ we infer that ${\nu _i} \ge a{S}{\nu _i}^{\frac{p}{{p_s^*}}} + bS^\theta{\nu _i}^{\frac{{\theta p}}{{p_s^*}}}$. Setting ${\nu '_i}:=\nu _i^{\frac{1}{{p_s^*}}}$, we have
\begin{align*}
a{S}{({\nu '_i})^p} + bS^\theta{({\nu '_i})^{\theta p}} - {({\nu '_i})^{p_s^*}} \le 0,
\end{align*}
which implies $H'({\nu '_i})\leq 0$ defined in (\ref{dK}), so we can deduce that ${\nu '_i}\geq {\hat T}$, where ${\hat T}$ is the unique maximum of $H$ defined in Lemma \ref{kmax}. And we can know that
\begin{equation*}
\hat H({\hat T}) = a{S} + bS^\theta{{\hat T}^{(\theta-1)p}} - {{\hat T}^{p_s^* - p}}=0,
\end{equation*}
which implies
\begin{equation}\label{T}
 a{S}{\hat T}^p + bS^\theta{{\hat T}^{\theta p}} - {{\hat T}^{p_s^* }}=0.
\end{equation}
Therefore, we can infer
\begin{align*}
c+o_n(1)=&J_\varepsilon(u_n)- \frac{{1}}{{\theta p}}\left\langle {{{J'}_\varepsilon }({u_n}),{u_n}} \right\rangle\\
\geq& {\frac{(\theta-1)a}{\theta p}}[u_n]_{s,p}^p+\left(\frac{{1}}{{\theta p}}-\frac{1}{{p_s^*}}\right)\int_{{\Lambda _\varepsilon } } {{{(u_n )}^{p_s^*}}} dx+ \frac{{(\theta-1)}}{{\theta p}}\int_{\mathbb{R}^N} {V(\varepsilon x)|{u_n}|^p} dx\\
&+ \frac{{1}}{{\theta p}}\int_{{\mathbb{R}^N}\setminus{\Lambda _\varepsilon }}\left( g(\varepsilon x,u_n)u_n-\theta p G(\varepsilon x,u_n)\right) dx\\
\geq& {\frac{(\theta-1)a}{\theta p}}[u_n]_{s,p}^p+ \frac{{(\theta-1)}}{{\theta p}}\int_{\mathbb{R}^N} {V(\varepsilon x)|{u_n}|^p} dx-\frac{{(\theta-1)}}{{\theta}}\int_{{\mathbb{R}^N}\setminus{\Lambda _\varepsilon }} G(\varepsilon x,u_n) dx\\
&+\left(\frac{{1}}{{\theta p}}-\frac{1}{{p_s^*}}\right)\int_{{\Lambda _\varepsilon } } {{{(u_n )}^{p_s^*}}} dx \\
\geq&\frac{(\theta-1)a}{\theta p}\int_{\Lambda_\varepsilon}|D^su_n|^p\varphi_\rho dx+\left(\frac{{1}}{{\theta p}}-\frac{1}{{p_s^*}}\right)\int_{{\Lambda _\varepsilon } } {{{(u_n )}^{p_s^*}}} \varphi_\rho dx\\
&+\left(\frac{(\theta-1)}{\theta p}-\frac{(\theta-1)}{\theta pK}\right)\int_{{\mathbb{R}^N}\setminus{\Lambda _\varepsilon }} {V(\varepsilon x)|{u_n}|^p} dx\\
\geq& {\frac{(\theta-1)a}{\theta p}}\mu_i+\left(\frac{{1}}{{\theta p}}-\frac{1}{{p_s^*}}\right)\nu_i\\
\geq&{\frac{(\theta-1)a}{\theta p}}{S}{\nu _i}^{\frac{p}{{p_s^*}}}+\left(\frac{{1}}{{\theta p}}-\frac{1}{{p_s^*}}\right)\left(a{S}{\nu _i}^{\frac{p}{{p_s^*}}} + bS^\theta{\nu _i}^{\frac{{\theta p}}{{p_s^*}}}\right)\\
=& \frac{(\theta-1)a}{\theta p}{S} ({\nu '_i})^{p}+\left(\frac{{1}}{{\theta p}}-\frac{1}{{p_s^*}}\right)\left(a{S} ({\nu '_i})^{p}+ bS^2({\nu '_i})^{\theta p}\right)\\
\geq&\frac{(\theta-1)a}{\theta p}{S} {\hat T}^{p}+\left(\frac{{1}}{{\theta p}}-\frac{1}{{p_s^*}}\right)\left(a{S} {\hat T}^{p}+ bS^\theta{\hat T}^{\theta p}\right)\\
=& \frac{a}{p}{S} {\hat T}^{p}+ \frac{b}{\theta p}S^\theta{\hat T}^{\theta p}-\frac{1}{p_s^*} {\hat T}^{p_s^*},
\end{align*}
contracting with the assumption $c<c_0$. Then $u_n\to u$ in $L^{p_s^*}(\Lambda_\varepsilon)$. Observe that
\begin{align*}
I_{n,\rho}^4\leq\int_{({{\mathbb{R}^N}\setminus{\Lambda _\varepsilon }})\cap B_{2\rho}(0)}|g(\varepsilon x,u_n)(u_n-u)|+\int_{{\Lambda _\varepsilon }\cap B_{2\rho}(0)}|g(\varepsilon x,u_n)(u_n-u)|.
\end{align*}
The Sobolev embedding theorem implies that $u_n\to u$ in $L_{loc}^{q}({\mathbb{R}^N})$, $1\leq q<p_s^*$. By $(G_3)$-$(ii)$, we get $\int_{{({\mathbb{R}^N}\setminus{\Lambda _\varepsilon }})\cap B_{2\rho}(0)}|g(\varepsilon x,u_n)(u_n-u)|\to 0$, as $n\to \infty$. Moreover, from $u_n\to u$ in $L^{p_s^*}(\Lambda_\varepsilon)$ we obtain
\begin{align*}
\int_{{\Lambda _\varepsilon }\cap B_{2\rho}(0)}|g(\varepsilon x,u_n)(u_n-u)|\to 0,\ \ \mbox{as}\ \ n\to \infty.
\end{align*}
Then,
\begin{align*}
\mathop {\lim }\limits_{\rho  \to \infty } [\mathop {\lim \sup }\limits_{n \to \infty } |I_{n,\rho}^4|]=0.
\end{align*}
We obtain
\begin{align*}
0\leq\mathop {\lim \sup }\limits_{n \to \infty }\int_{B_R(0)}{P_n}(x)dx\leq\mathop {\lim \sup }\limits_{n \to \infty } (|I_{n,\rho}^1|+|I_{n,\rho}^2|+|I_{n,\rho}^3|+|I_{n,\rho}^4|)=0.
\end{align*}
Then, $\mathop {\lim \sup }\limits_{n \to \infty }\int_{B_R(0)}{P_n}(x)dx=0$. Consequently, we obtain
\begin{align}\label{*5}
 &{M_n}\int_{B_R(0)}\int_{\mathbb{R}^N} {\frac{\left[|u_n(x)-{u_n}(y)|^{p-2}(u_n(x)-{u_n}(y))-|u(x)-u(y)|^{p-2}(u(x)-u(y))\right]}{{|x - y{|^{N + sp}}}}}\nonumber\\
 &\times{[({u_n}(x) - u(x)) - ({u_n}(y) - u(y))]}dxdy +\int_{B_R(0)} V(\varepsilon x)\left(|u_n|^{p-2}u_n-|u|^{p-2}u\right)(u_n-u) dx\rightarrow 0,
\end{align}
as $n\to\infty$.
By Lemma \ref{Simon}, for $p\geq 2$, we know that
\begin{align*}
&\int_{B_R(0)}\int_{\mathbb{R}^N} \frac{\left|[u_n(x)-{u_n}(y)]-[u(x)-u(y)]\right|^p}{|x - y|^{N + sp}}dxdy\\
\leq& c_p \int_{B_R(0)}\int_{\mathbb{R}^N} {\frac{\left[|u_n(x)-{u_n}(y)|^{p-2}(u_n(x)-{u_n}(y))-|u(x)-u(y)|^{p-2}(u(x)-u(y))\right]}{{|x - y{|^{N + sp}}}}}\\
 &\times{[({u_n}(x) - u(x)) - ({u_n}(y) - u(y))]}dxdy\to 0.
\end{align*}
In a similar way, by (\ref{*5}), we get
\begin{align*}
\int_{B_R(0)} V(\varepsilon x)|u_n-u|^p dx\leq&c_p\int_{B_R(0)} V(\varepsilon x)\left(|u_n|^{p-2}u_n-|u|^{p-2}u\right)(u_n-u) dx.
\end{align*}
Now, we consider the case  $1<p<2$. Since $u_n\rightharpoonup u$ in ${\mathcal{W}}_\varepsilon$, there exists $\kappa>0$ such that $\|u_n\|_\varepsilon\leq\kappa$ for all $n\in\mathbb{N}$. Hence, by (\ref{*5}), Simon inequality and H\"{o}lder inequality, we can see that
\begin{align*}
&\int_{B_R(0)}\int_{\mathbb{R}^N} \frac{\left|[u_n(x)-{u_n}(y)]-[u(x)-u(y)]\right|^p}{|x - y|^{N + sp}}dxdy\\
\leq& C_p\int_{B_R(0)}\int_{\mathbb{R}^N} \frac{\left[|u_n(x)-{u_n}(y)|^{p-2}(u_n(x)-{u_n}(y))-|u(x)-u(y)|^{p-2}(u(x)-u(y))\right]^{\frac{p}{2}}}{|x - y|^{N + sp}}\\
&\times\left[(u_n(x)-{u_n}(y)-(u(x)-u(y)))\right]^{\frac{p}{2}}\left(|u_n(x)-{u_n}(y)|^p+|u(x)-u(y)|^p\right)^{\frac{2-p}{2}}dxdy\\
\leq& C_p (\int_{B_R(0)}\int_{\mathbb{R}^N} {\frac{\left[|u_n(x)-{u_n}(y)|^{p-2}(u_n(x)-{u_n}(y))-|u(x)-u(y)|^{p-2}(u(x)-u(y))\right]}{{|x - y{|^{N + sp}}}}}\\
&\times{[({u_n}(x) - u(x)) - ({u_n}(y) - u(y))]}dxdy)^{\frac{p}{2}}\left([u_n]_{s,p}^p+[u]_{s,p}^p\right)^{\frac{2-p}{2}}\\
\leq&C_p (\int_{B_R(0)}\int_{\mathbb{R}^N} {\frac{\left[|u_n(x)-{u_n}(y)|^{p-2}(u_n(x)-{u_n}(y))-|u(x)-u(y)|^{p-2}(u(x)-u(y))\right]}{{|x - y{|^{N + sp}}}}}\\
&\times{[({u_n}(x) - u(x)) - ({u_n}(y) - u(y))]}dxdy)^{\frac{p}{2}}\left([u_n]_{s,p}^{\frac{p(2-p)}{2}}+[u]_{s,p}^{\frac{p(2-p)}{2}}\right)\\
\leq&C'_p (\int_{B_R(0)}\int_{\mathbb{R}^N} {\frac{\left[|u_n(x)-{u_n}(y)|^{p-2}(u_n(x)-{u_n}(y))-|u(x)-u(y)|^{p-2}(u(x)-u(y))\right]}{{|x - y{|^{N + sp}}}}}\\
&\times{[({u_n}(x) - u(x)) - ({u_n}(y) - u(y))]}dxdy)^{\frac{p}{2}}\to 0.
\end{align*}
Then, we can also obtain
\begin{align*}
\int_{B_R(0)} V(\varepsilon x)|u_n-u|^p dx\leq&C''_p\left(\int_{B_R(0)} V(\varepsilon x)\left(|u_n|^{p-2}u_n-|u|^{p-2}u\right)(u_n-u) dx\right)^{\frac{p}{2}}\to0.
\end{align*}
Consequently,
\begin{align*}
&\mathop {\lim }\limits_{n \to \infty } \int_{{B_{{R}}}(0)} \left(a\int_{\mathbb{R}^N} {\frac{{|{u_n}(x) - {u_n}(y){|^p}}}{{|x - y{|^{N + sp}}}}}  dy + V(\varepsilon x)u_n^p\right)dx \\
=&\int_{{B_{{R}}}(0)} \left(a\int_{\mathbb{R}^N} {\frac{{|{u}(x) - {u}(y){|^p}}}{{|x - y{|^{N + sp}}}}}  dy + V(\varepsilon x)u^p\right)dx.
\end{align*}
\end{proof}

\begin{lemma}\label{conv2}
The function ${J_\varepsilon }$ verifies the $(PS)_c$ condition in ${\mathcal{W}}_\varepsilon$ at any level $c<c_0$.
\end{lemma}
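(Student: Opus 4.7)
Let $\{u_n\}\subset\mathcal{W}_\varepsilon$ be a $(PS)_c$ sequence for $J_\varepsilon$ with $c<c_0$. My plan is to glue together the three preparatory results (Lemmas \ref{bdd}, \ref{conv}, \ref{conv1}) and then invoke the uniform convexity of the fractional Sobolev norm to upgrade weak to strong convergence. The hypothesis $c<c_0$ is crucial, but it is already consumed inside Lemma \ref{conv1}, where it prevents concentration in $\Lambda_\varepsilon$ via the concentration--compactness argument.

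First, Lemma \ref{bdd} yields that $\{u_n\}$ is bounded in $\mathcal{W}_\varepsilon$. Passing to a subsequence (not relabelled) I may assume $u_n\rightharpoonup u$ in $\mathcal{W}_\varepsilon$, $u_n\to u$ in $L^q_{\rm loc}(\mathbb{R}^N)$ for all $q\in[p,p_s^*)$, and $u_n\to u$ a.e.\ in $\mathbb{R}^N$. Since Fubini gives
\[
\|w\|_\varepsilon^p = \int_{\mathbb{R}^N}\Bigl(a\int_{\mathbb{R}^N}\frac{|w(x)-w(y)|^p}{|x-y|^{N+sp}}\,dy + V(\varepsilon x)|w(x)|^p\Bigr)dx,
\]
both Lemmas \ref{conv} and \ref{conv1} are statements precisely about the density of the norm $\|\cdot\|_\varepsilon^p$.

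Given any $\zeta>0$, apply Lemma \ref{conv} to fix $R_1=R_1(\zeta)$ so that the tail density integral of $u_n$ over $\mathbb{R}^N\setminus B_{R_1}(0)$ is eventually less than $\zeta$. By Fatou's lemma applied to the nonnegative density, the same tail bound (up to enlarging $R_1$) is inherited by the weak limit $u$. Inside $B_{R_1}(0)$, Lemma \ref{conv1} gives convergence of the localised density integrals of $u_n$ to those of $u$. Writing $\|u_n\|_\varepsilon^p$ as the sum of the integral over $B_{R_1}(0)$ and its complement, and doing the same for $\|u\|_\varepsilon^p$, I conclude
\[
\limsup_{n\to\infty}\|u_n\|_\varepsilon^p \le \|u\|_\varepsilon^p + C\zeta.
\]
Letting $\zeta\to 0^+$ and combining with the weak lower semicontinuity $\|u\|_\varepsilon^p\le\liminf_{n\to\infty}\|u_n\|_\varepsilon^p$ gives $\|u_n\|_\varepsilon\to\|u\|_\varepsilon$.

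Finally, since $1<p<\infty$, the space $\mathcal{W}_\varepsilon$ endowed with the norm $\|\cdot\|_\varepsilon$ is uniformly convex (it is a closed subspace of $W^{s,p}(\mathbb{R}^N)$ with an equivalent norm). In any uniformly convex Banach space, weak convergence together with convergence of norms implies strong convergence; hence $u_n\to u$ in $\mathcal{W}_\varepsilon$, which proves the $(PS)_c$ condition. The only genuinely delicate point is the simultaneous control of the ball integral and the tail, and that delicacy is where Lemma \ref{conv1}'s concentration--compactness argument (which excludes Dirac masses inside $\Lambda_\varepsilon$ precisely because $c<c_0$) does the work. As an alternative, one can bypass uniform convexity by using the Simon inequality computation already performed in the proof of Lemma \ref{conv1} to deduce $\int_{B_R}\int_{\mathbb{R}^N}|D^s(u_n-u)|^p\,dy\,dx\to 0$ and $\int_{B_R}V(\varepsilon x)|u_n-u|^p\,dx\to 0$ directly, then add the tail estimate from Lemma \ref{conv} (and its Fatou counterpart for $u$) to get $\|u_n-u\|_\varepsilon\to 0$.
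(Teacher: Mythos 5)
Your argument is correct and follows the same route as the paper: boundedness from Lemma \ref{bdd}, tail smallness from Lemma \ref{conv}, local convergence of the $\|\cdot\|_\varepsilon$ density from Lemma \ref{conv1}, combine to get $\|u_n\|_\varepsilon\to\|u\|_\varepsilon$, and upgrade weak to strong convergence. The only difference is that you make explicit the final step (uniform convexity of $\mathcal{W}_\varepsilon$, or equivalently the Radon--Riesz property) that the paper leaves as ``Consequently, \ldots and so $u_n\to u$,'' and you note the unnecessary-but-harmless Fatou remark for the tail of $u$ (in the paper's chain one simply uses $\int_{B_{R_1}}(\cdots u\cdots)\le\|u\|_\varepsilon^p$ directly); your sketched alternative via the Simon-inequality estimates from Lemma \ref{conv1}'s proof is also a viable variant.
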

\begin{proof}
Let $\{u_n\}$ be a $(PS)_c$ sequence for $J_\varepsilon$. By Lemma \ref{bdd} we see that $\{u_n\}$ is bounded in ${\mathcal{W}}_\varepsilon$. Passing to a subsequence, we have $u_n\rightharpoonup u$ for some $u\in{\mathcal{W}}_\varepsilon$. From Lemma \ref{conv}, it follows that for $\zeta>0$ given, there exists $R_1=R(\zeta)>C/\zeta$ with $C>0$ independent on $\zeta$ such that
\begin{equation*}
\mathop {\lim \sup }\limits_{n \to \infty } \int_{\mathbb{R}^N\setminus{B_{{R_1}}}(0)} \left(a\int_{\mathbb{R}^N} {\frac{{|{u_n}(x) - {u_n}(y){|^p}}}{{|x - y{|^{N + sp}}}}}  dy + V(\varepsilon x)u_n^p\right)dx <\zeta.
\end{equation*}
Therefore, from Lemma \ref{conv1}, one has
\begin{align*}
\|u\|_\varepsilon^p\leq&\mathop {\lim \inf }\limits_{n \to \infty }\|u_n\|_\varepsilon^p\leq\mathop {\lim \sup }\limits_{n \to \infty }\|u_n\|_\varepsilon^p\\
=&\mathop {\lim \sup }\limits_{n \to \infty }\left\{\int_{{B_{{R_1}}}(0)} \left(a\int_{\mathbb{R}^N} {\frac{{|{u_n}(x) - {u_n}(y){|^p}}}{{|x - y{|^{N + sp}}}}}  dy + V(\varepsilon x)u_n^p\right)dx\right.\\
&+\left.\int_{\mathbb{R}^N\setminus{B_{{R_1}}}(0)} \left(a\int_{\mathbb{R}^N} {\frac{{|{u_n}(x) - {u_n}(y){|^p}}}{{|x - y{|^{N + sp}}}}}  dy + V(\varepsilon
 x)u_n^p\right)dx\right\}\\
=&\int_{{B_{{R_1}}}(0)} \left(a\int_{\mathbb{R}^N} {\frac{{|{u_n}(x) - {u_n}(y){|^p}}}{{|x - y{|^{N + sp}}}}}  dy + V(\varepsilon x)u_n^p\right)dx\\
&+\mathop {\lim \sup }\limits_{n \to \infty }\int_{\mathbb{R}^N\setminus{B_{{R_1}}}(0)} \left(a\int_{\mathbb{R}^N} {\frac{{|{u_n}(x) - {u_n}(y){|^p}}}{{|x - y{|^{N + sp}}}}}  dy + V(\varepsilon x)u_n^p\right)dx\\
<&\int_{{B_{{R_1}}}(0)} \left(a\int_{\mathbb{R}^N} {\frac{{|{u_n}(x) - {u_n}(y){|^p}}}{{|x - y{|^{N + sp}}}}}  dy + V(\varepsilon x)u_n^p\right)dx+\zeta,
\end{align*}
where $R_1=R(\zeta)>C/\zeta$. Passing to the limit as $\zeta\to 0$ we have $R_1\to \infty$, which implies that
\begin{align*}
\|u\|_\varepsilon^p\leq&\mathop {\lim \inf }\limits_{n \to \infty }\|u_n\|_\varepsilon^p\leq\mathop {\lim \sup }\limits_{n \to \infty }\|u_n\|_\varepsilon^p\leq\|u\|_\varepsilon^p.
\end{align*}
Consequently, $\|u_n\|_\varepsilon\to\|u\|_\varepsilon$ and so $u_n\to u$ in ${\mathcal{W}}_\varepsilon$.
\end{proof}

\begin{proposition}\label{conv*}
The function ${J_\varepsilon }$ restricted to $\mathcal{N}_\varepsilon$ satisfies the $(PS)_c$ condition in ${\mathcal{W}}_\varepsilon$ at any level $c<c_0$.
\end{proposition}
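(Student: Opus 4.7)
The plan is to reduce the constrained Palais-Smale condition on $\mathcal{N}_\varepsilon$ to the unconstrained one already established in Lemma \ref{conv2}. Suppose $\{u_n\} \subset \mathcal{N}_\varepsilon$ satisfies $J_\varepsilon(u_n) \to c < c_0$ and $(J_\varepsilon|_{\mathcal{N}_\varepsilon})'(u_n) \to 0$. Setting $T_\varepsilon(u) := \langle J'_\varepsilon(u), u \rangle$ so that $\mathcal{N}_\varepsilon = T_\varepsilon^{-1}(0) \setminus \{0\}$, the Lagrange multiplier rule produces a sequence $\{\lambda_n\} \subset \mathbb{R}$ with
\begin{equation*}
J'_\varepsilon(u_n) - \lambda_n T'_\varepsilon(u_n) \to 0 \quad \text{in} \quad \mathcal{W}_\varepsilon^*.
\end{equation*}
Testing this identity against $u_n$ and using $\langle J'_\varepsilon(u_n), u_n \rangle = 0$ gives $\lambda_n \langle T'_\varepsilon(u_n), u_n \rangle = o(\|u_n\|_\varepsilon)$. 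Hence it suffices to show that $\{u_n\}$ is bounded in $\mathcal{W}_\varepsilon$ and that $\langle T'_\varepsilon(u_n), u_n \rangle$ stays bounded away from zero; then $\lambda_n \to 0$, $J'_\varepsilon(u_n) \to 0$, and Lemma \ref{conv2} yields strong convergence.

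The boundedness $\|u_n\|_\varepsilon \le C$ follows from the calculation of Lemma \ref{bdd}, which only uses $(G_3)$ together with the Nehari identity $\langle J'_\varepsilon(u_n), u_n \rangle = 0$. A uniform positive lower bound $\|u_n\|_\varepsilon \ge \rho_0 > 0$ is obtained by plugging the growth estimate \eqref{g1} into the Nehari identity and using the fractional Sobolev embeddings, since
\begin{equation*}
\|u_n\|_\varepsilon^p \leq \int_{\mathbb{R}^N} g(\varepsilon x, u_n) u_n \, dx \leq \varepsilon C_1 \|u_n\|_\varepsilon^{\theta p} + C_\varepsilon C_2 \|u_n\|_\varepsilon^{p_s^*}
\end{equation*}
forces $\|u_n\|_\varepsilon$ away from $0$ in view of $\theta p, p_s^* > p$.

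The main obstacle, and the only genuinely delicate step, is the quadratic estimate $\langle T'_\varepsilon(u_n), u_n \rangle \leq -\delta_0 \|u_n\|_\varepsilon^p$ for some $\delta_0 > 0$. Differentiating $t \mapsto T_\varepsilon(tu)$ at $t=1$ and simplifying with the Nehari identity yields
\begin{equation*}
\langle T'_\varepsilon(u), u \rangle = (p-1)\int_{\mathbb{R}^N} g(\varepsilon x, u) u \, dx + (\theta p - p) b [u]_{s,p}^{\theta p} - \int_{\mathbb{R}^N} g'(\varepsilon x, u) u^2 \, dx.
\end{equation*}
The monotonicity $(G_4)$ gives $g'(\varepsilon x, t) t \ge (\theta p - 1) g(\varepsilon x, t)$ only on the ``good'' set $\mathcal{G} := \Lambda_\varepsilon \cup \{u \le a\}$, whereas on $\mathcal{A} := (\mathbb{R}^N \setminus \Lambda_\varepsilon) \cap \{u > a\}$ the penalized form $g(\varepsilon x, t) = (V_0/K) t^{p-1}$ gives only the weaker equality $g'(\varepsilon x, t) t = (p-1) g(\varepsilon x, t)$; this mismatch between the two regions is the subtle point. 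Splitting the integral of $g'(\varepsilon x,u) u^2$ over $\mathcal{G}$ and $\mathcal{A}$, substituting these two bounds, and re-using the Nehari identity to rewrite
\begin{equation*}
\int_{\mathcal{G}} g(\varepsilon x, u) u \, dx = \|u\|_\varepsilon^p + b [u]_{s,p}^{\theta p} - \int_{\mathcal{A}} g(\varepsilon x, u) u \, dx,
\end{equation*}
together with the penalization property $\int_{\mathcal{A}} g(\varepsilon x, u) u \, dx \le \frac{1}{K}\int V(\varepsilon x)|u|^p \, dx \le \frac{1}{K}\|u\|_\varepsilon^p$ and $K > 2$, produces
\begin{equation*}
\langle T'_\varepsilon(u), u\rangle \leq -(\theta p - p)\Bigl(1 - \tfrac{1}{K}\Bigr) \|u\|_\varepsilon^p.
\end{equation*}

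Combined with $\|u_n\|_\varepsilon \ge \rho_0$ and the boundedness of $T'_\varepsilon(u_n)$ in $\mathcal{W}_\varepsilon^*$, this forces $\lambda_n \to 0$. Consequently $J'_\varepsilon(u_n) \to 0$ in $\mathcal{W}_\varepsilon^*$, so $\{u_n\}$ is an ordinary $(PS)_c$ sequence for $J_\varepsilon$ with $c < c_0$, and Lemma \ref{conv2} delivers the strong convergence $u_n \to u$ in $\mathcal{W}_\varepsilon$.
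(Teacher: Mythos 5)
Your argument is correct and follows the paper's approach exactly: introduce a Lagrange multiplier $\lambda_n$, use the Nehari identity and the monotonicity hypothesis $(G_4)$ to establish $\langle T'_\varepsilon(u_n),u_n\rangle\le-(\theta-1)p\bigl(1-\tfrac{1}{K}\bigr)\|u_n\|_\varepsilon^p$, deduce $\lambda_n\to0$, and invoke Lemma~\ref{conv2}. The only difference is cosmetic algebra in deriving the key inequality (your explicit split over $\mathcal{G}$ and $\mathcal{A}$ versus the paper's folding of the penalty term into the single bound $(\theta p-1)g(\varepsilon x,u_n)u_n-g'(\varepsilon x,u_n)u_n^2\le(\theta-1)p(1-\chi_\Lambda)(\varepsilon x)\tilde f(u_n)u_n$), and you helpfully make explicit the uniform lower bound $\|u_n\|_\varepsilon\ge\rho_0>0$ that the paper leaves implicit when asserting $\langle T'_\varepsilon(u_n),u_n\rangle\to l<0$.
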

\begin{proof}
Let $\{u_n\}\subset\mathcal{N}_\varepsilon$ be such that $J_\varepsilon(u_n)\to c$ and $\|J'_\varepsilon(u_n)_{|_{\mathcal{N}_\varepsilon}}\|_*=o_n(1)$. Then there exists $\{\lambda_n\}\subset\mathbb{R}$ such that
\begin{align*}
J'_\varepsilon(u_n)=\lambda_nG'_\varepsilon(u_n)+o_n(1),
\end{align*}
where $J'_\varepsilon(u)=\left\langle {{J'_\varepsilon }({u}),{u}} \right\rangle$. Moreover, since $u_n\in \mathcal{N}_\varepsilon$, we know that
\begin{align}\label{0}
0=\left\langle {{J'_\varepsilon }({u_n}),{u_n}} \right\rangle=\lambda_n\left\langle {{G'_\varepsilon }({u_n}),{u_n}} \right\rangle+o_n(1)\|u_n\|_{{\mathcal{W}}_\varepsilon}.
\end{align}
Next, we will show that $\lambda_n\to 0$ as $n\to\infty$. Indeed, by $(G_4)$ and $\left\langle {{J'_\varepsilon }({u_n}),{u_n}} \right\rangle=o_n(1)$, we deduce that
\begin{align*}
\left\langle {{G'_\varepsilon }({u_n}),{u_n}} \right\rangle
=&p\|u_n\|_\varepsilon^p+\theta pb[u_n]_{s,p}^{\theta p}-\int_{\mathbb{R}^N}g'(\varepsilon x,u_n)u_n^2dx-\int_{\mathbb{R}^N}g(\varepsilon x,u_n)u_ndx\\
=&-(\theta-1)p\|u_n\|_\varepsilon^p+\int_{\mathbb{R}^N}[(\theta p-1)g(\varepsilon x,u_n)u_n-g'(\varepsilon x,u_n)u_n^2]dx\\
\leq&-(\theta-1)p\|u_n\|_\varepsilon^p+(\theta-1)p\int_{\mathbb{R}^N}(1-\chi)(\varepsilon x)\tilde f({u_n})u_ndx\\
\leq&-((\theta-1)p-\frac{(\theta-1)p}{K})\|u_n\|_\varepsilon^p.
\end{align*}
Thus, we may assume that $\left\langle {{G'_\varepsilon }({u_n}),{u_n}} \right\rangle\to l<0$. It follows from (\ref{0}) that $\lambda_n\to 0$ as $n\to\infty$ and then we see that ${J'_\varepsilon }({u_n})\to 0$ as $n\to\infty$. Hence, $\{u_n\}$ is a $(PS)_c$ sequence for $J_\varepsilon$.
\end{proof}

\section{the autonomous problem}
In this section, we are concerned with the autonomous equation:
\begin{eqnarray}\label{JV0}
(a + b\left[ u \right]_{s,p}^{(\theta-1)p})(-\Delta)_p^su + V_0{u^{p - 1}} = f(u) + {u^{p_s^* - 1}}\ \ & \mbox{in}\ {\mathbb{R}^N},\ \ \ u>0,\ u\in{W^{s,p}}(\mathbb{R}^N),
\end{eqnarray}
where $V_0$ is given in $(V_2)$.\

The functional of (\ref{JV0}) is denoted by
\begin{equation*}
{J_0 }(u) = \frac{1}{p}\|u\|_0^p + \frac{b}{{\theta p}}[u]_{_{s,p}}^{\theta p}  - \int_{\mathbb{R}^N} {F(u)} dx- \frac{1}{p_s^*}\int_{\mathbb{R}^N} {(u^+)^{p_s^*}} dx.
\end{equation*}
The Nehari manifold corresponding to (\ref{JV0}) is defined by
\begin{equation*}
{\mathcal{N}}_0:=\left\{u\in{{\mathcal{W}}_0\setminus\{0\}}:\left\langle {{J'_0 }(u),u} \right\rangle =0\right\},
\end{equation*}
where ${\mathcal{W}}_0={W^{s,p}}(\mathbb{R}^N)$ is endowed with the norm $\|u\|_0^p=a[u]_{s,p}^p +\int_{\mathbb{R}^N} {V_0} |u{|^p}dx$.\

As (\ref{c_e}), we have the following characterization of infimum of $J_0$ on $\mathcal{N}_0$
\begin{equation}\label{c_V0}
{c_{V_0} } = \mathop {\inf }\limits_{u \in {\mathcal{W}_0 }\setminus\{0\}} \mathop {\sup }\limits_{t \ge 0} {J_0 }(tu) = \mathop {\inf }\limits_{u \in \mathcal{N}_0} {J_0}(u)>0,
\end{equation}
and as Lemma \ref{compare}, we can show that $c_{V_0}<c_0$.
\begin{lemma}\label{pgs}
Let $\{u_n\}\subset\mathcal{W}_0$ be a $(PS)_{c_{V_0}}$ sequence of ${J_0 }$. Then there exists $u\in\mathcal{W}_0\setminus\{0\}$ such that $u_n\to u$ in $\mathcal{W}_0$. Moreover, $u$ is a positive ground state of the equation (\ref{JV0}).
\end{lemma}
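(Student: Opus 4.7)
\textbf{Proof plan for Lemma \ref{pgs}.} First I would show that $\{u_n\}$ is bounded in $\mathcal{W}_0$ by repeating the Ambrosetti--Rabinowitz manipulation of Lemma \ref{bdd}: combining $J_0(u_n)-\frac{1}{\mu}\langle J'_0(u_n),u_n\rangle$ with $(F_3)$ and $\mu>\theta p$ gives uniform control on $\|u_n\|_0^p$ (the argument is in fact cleaner than in Lemma \ref{bdd} because no penalized region is present). After extracting a subsequence I may assume $u_n\rightharpoonup u$ in $\mathcal{W}_0$ and $u_n\to u$ a.e.\ in $\mathbb{R}^N$.

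The second step is to secure a nontrivial weak limit, exploiting the translation invariance of $J_0$. If $\{u_n\}$ were vanishing in the sense of Lemma \ref{Lion}, then $u_n\to 0$ in $L^r(\mathbb{R}^N)$ for every $r\in(p,p_s^*)$; by $(F_1)$--$(F_2)$ this forces $\int f(u_n)u_n\,dx\to 0$ and $\int F(u_n)\,dx\to 0$, and the concentration analysis of Lemma \ref{nonv}, used together with Sobolev's inequality and $\langle J'_0(u_n),u_n\rangle=o_n(1)$, would give $l^{p_s^*}:=\lim_n\int(u_n^+)^{p_s^*}dx$ with $aSl^p+bS^\theta l^{\theta p}\le l^{p_s^*}$; hence $l\ge\hat T$ and $c_{V_0}\ge H(\hat T)=c_0$, contradicting $c_{V_0}<c_0$ (the strict inequality being obtained exactly as in Lemma \ref{compare} applied to $V_0$). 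The nonvanishing alternative therefore holds: there exist $y_n\in\mathbb{R}^N$ and $\beta>0$ with $\int_{B_R(y_n)}|u_n|^p\,dx\ge\beta$, and $\tilde u_n:=u_n(\cdot+y_n)$ is again a $(PS)_{c_{V_0}}$ sequence (by autonomy) with weak limit $u\not\equiv 0$.

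The third and most delicate step is to upgrade weak convergence to strong convergence, so that $u\in\mathcal{N}_0$, $J_0(u)=c_{V_0}$, and consequently $u$ is a ground state. The principal obstacle is the coupling of the Kirchhoff weight $a+b[\tilde u_n]_{s,p}^{(\theta-1)p}$ with the critical term: by boundedness I may extract $[\tilde u_n]_{s,p}^p\to A\ge[u]_{s,p}^p$, and passing to the limit in $\langle J'_0(\tilde u_n),\varphi\rangle=o_n(1)$ for $\varphi\in\mathcal{C}_c^\infty(\mathbb{R}^N)$, along the lines of Lemma \ref{conv1}, shows that $u$ solves the equation with modified coefficient $a+bA^{\theta-1}$. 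Applying the fractional concentration-compactness decomposition (\ref{ccl}) to $|\tilde u_n|^{p_s^*}$ and running the energy comparison exactly as in Lemma \ref{conv1} rules out all Dirac concentrations (once more because $c_{V_0}<c_0$), while the tail estimate of Lemma \ref{conv} precludes mass escaping to infinity. This forces $A=[u]_{s,p}^p$ and $\|\tilde u_n\|_0\to\|u\|_0$, which together with the Simon inequality (Lemma \ref{Simon}) yields $\tilde u_n\to u$ strongly in $\mathcal{W}_0$. Positivity $u>0$ then follows by testing the limiting equation against $u^-$ (using $f(t)=0$ for $t\le 0$ and the monotonicity of $|\cdot|^{p-2}\cdot$) to get $u\ge 0$, and invoking the strong maximum principle for $(-\Delta)_p^s$.
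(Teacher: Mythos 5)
Your boundedness and nonvanishing steps track the paper (the latter repeats the argument of Lemma~\ref{nonv}, which transfers to $J_0$ since $c_{V_0}<c_0$). Your third step, however, diverges from the paper and contains a genuine gap: you propose to exclude escaping mass via the tail estimate of Lemma~\ref{conv}, but that lemma is not available for the autonomous functional $J_0$. Its proof rests entirely on the inequality $g(\varepsilon x,t)t\le\frac{V_1}{K}t^p$ outside $\Lambda_\varepsilon$ (property $(G_3)$-$(ii)$), which comes from the penalization; for $J_0$ the nonlinearity $f(t)t+t^{p_s^*}$ has full critical growth everywhere, and the computation in (\ref{*1})--(\ref{*4}) no longer absorbs the nonlinear term into the quadratic part. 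For a translation-invariant critical problem, a uniform tail bound of this kind simply fails; excluding escape of mass requires an energy comparison, not decay of the tail of a fixed ball. Consequently your chain of implications does not deliver $A=[u]_{s,p}^p$.

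The paper sidesteps the issue by an argument tailored to the ground-state level. Passing to the limit in $\langle J'_0(u_n),\varphi\rangle=o_n(1)$ shows that $u$ solves the equation with the frozen coefficient $a+bB^{(\theta-1)p}$, where $B^{(\theta-1)p}:=\lim_n[u_n]_{s,p}^{(\theta-1)p}\ge[u]_{s,p}^{(\theta-1)p}$. If the inequality were strict, then from this limiting equation one computes $\langle J'_0(u),u\rangle=b\bigl([u]_{s,p}^{(\theta-1)p}-B^{(\theta-1)p}\bigr)[u]_{s,p}^p<0$, so there is $\tau\in(0,1)$ with $\tau u\in\mathcal{N}_0$; the strictly monotone, Fatou-lower-semicontinuous quantity $v\mapsto J_0(v)-\frac{1}{\theta p}\langle J'_0(v),v\rangle$ then yields
$c_{V_0}\le J_0(\tau u)-\frac{1}{\theta p}\langle J'_0(\tau u),\tau u\rangle<J_0(u)-\frac{1}{\theta p}\langle J'_0(u),u\rangle\le\liminf_n\bigl[J_0(u_n)-\frac{1}{\theta p}\langle J'_0(u_n),u_n\rangle\bigr]=c_{V_0}$,
a contradiction. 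Hence $B^{(\theta-1)p}=[u]_{s,p}^{(\theta-1)p}$, $J'_0(u)=0$, and the same Fatou/Nehari comparison with $t_0=1$ forces $J_0(u)=c_{V_0}$ and equality in all the Fatou inequalities, giving $\|u_n\|_0\to\|u\|_0$ and hence strong convergence. You should replace your Lemma~\ref{conv}-based exclusion of escaping mass by this Fatou/Nehari device (or by an explicit Brezis--Lieb splitting at level $c_{V_0}$); the concentration-compactness step you describe is unnecessary here, and the tail estimate you invoke does not exist for $J_0$. Your derivation of positivity from $u^-$-testing and the maximum principle matches the paper.
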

\begin{proof}
It is easy to check that $J_0$ has a mountain pass geometry, so there exists a sequence $\{u_n\}\subset\mathcal{W}_0$ such that $J_0(u_n)\to c_{V_0}$ and $J'_0(u_n)\to 0$. Thus, we can easily see that $\{u_n\}$ is bounded in $\mathcal{W}_0$ and we may assume that $u_n\rightharpoonup u$ in $\mathcal{W}_0$ and $[u_n]_{s,p}^{(\theta-1)p}\rightarrow B^{(\theta-1)p}$. Similar argument to the proof of Lemma \ref{nonv}, we can deduce that the sequence $\{u_n\}$ is nonvanishing, $i.e.$, there exists a sequence $\{x_n\}\subset \mathbb{R}^N$ and $R_0>0$, $\beta_0>0$ such that
\begin{equation*}
\int_{B_R(x_n)} |u_n|^p\ dx\geq\beta_0.
\end{equation*}
Then we suppose that $u\neq 0$. Since $\left\langle {{J'_0 }({u}),\varphi} \right\rangle=o_n(1)$, we can deduce that for all $\varphi\in \mathcal{C}_0^\infty(\mathbb{R}^N)$,
\begin{align}\label{aa}
&a\iint_{\mathbb{R}^{2N}}\frac{{|u(x) - u(y){|^{p - 2}}(u(x) - u(y))(\varphi (x) - \varphi (y))}}{{|x - y{|^{N + sp}}}}dxdy+\int_{{\mathbb{R}^N}}V(\varepsilon x)u^{p-1}\varphi dx\nonumber\\
&+bB^{(\theta-1)p}\iint_{{\mathbb{R}^{2N}}}\frac{{|u(x) - u(y){|^{p - 2}}(u(x) - u(y))(\varphi (x) - \varphi (y))}}{{|x - y{|^{N + sp}}}}dxdy\nonumber\\
=&\int_{{\mathbb{R}^{N}}}f(u)u\varphi+\int_{{\mathbb{R}^N}}(u^+)^{p_s^*-1}\varphi
\end{align}
where $B^{(\theta-1)p}:=\mathop {\lim}\limits_{n\to \infty}[u_n]_{s,p}^{(\theta-1)p}$. Let us note that $B^{(\theta-1)p}\geq [u]_{s,p}^{(\theta-1)p}$ by the weakly semi-continuous of norm. If by construction $B^{(\theta-1)p}>[u]_{s,p}^{(\theta-1)p}$, we may use (\ref{aa}) to deduce that $\left\langle {{J'_0 }({u}),u} \right\rangle<0$. Moreover, conditions $(F_1)$ and $(F_2)$ imply that $\left\langle {{J'_0 }({\tau u}),\tau u} \right\rangle=0$. From $t_0\in(\tau,1)$, $(F_3)$ and $(F_4)$ we get
\begin{align}\label{aaa}
c_{V_0}\leq& J_0(t_0u)-\frac{1}{\theta p}\left\langle {{J'_0 }({t_0 u}),t_0 u} \right\rangle<J_0(u)-\frac{1}{\theta p}\left\langle {{J'_0 }({ u}),u} \right\rangle\nonumber\\
\leq & \mathop {\lim \inf }\limits_{n \to \infty }[ J_0(u_n)-\frac{1}{\theta p}\left\langle {{J'_0 }({ u_n}), u_n} \right\rangle]=c_{V_0},
\end{align}
which gives a contradiction. Therefore, $B^{(\theta-1)p}=[u]_{s,p}^{(\theta-1)p}$ and we deduce that $J'_0(u)=0$. Using the fact that $\left\langle {{J'_0 }({ u}), u^-} \right\rangle=0$, where $u^-=\min\{u,0\}$, by $f(t)=0$ for $t\leq0$, and $|x - y{|^{p - 2}}(x - y)({x^ - } - {y^ - }) \ge |{x^ - } - {y^ - }{|^p}$, where $x^-=\min\{x,0\}$, we can deduce that
\begin{align*}
\|u^-\|_0^p\leq&a\iint_{\mathbb{R}^{2N}}\frac{{|u(x) - u(y){|^{p - 2}}(u(x) - u(y))(u^- (x) - u^- (y))}}{{|x - y{|^{N + sp}}}}dxdy+\int_{{\mathbb{R}^N}}V(\varepsilon x)|u|^{p-2}uu^- dx\\
\leq&\int_{{\mathbb{R}^N}}f(u)u^-+\int_{{\mathbb{R}^N}}(u^+)^{p_s^*-1}u^-=0,
\end{align*}
which implies that $u^-=0$, that is $u\geq0$. Moreover, proceeding as in the proof of Lemma \ref{estim} below, we can see that $u\in L^\infty({{\mathbb{R}^N}})$. By Corollary $5.5$ in \cite{ASM}, we deduce that $u\in\mathcal{C}^{0,\alpha}({{\mathbb{R}^N}})$, and applying maximum principle \cite{PMQ} we can conclude that $u>0$ in $\mathbb{R}^N$. Finally, arguing as in (\ref{aaa}) with $t_0=1$, we can show that $u$ is a positive ground state solution to (\ref{JV0}).
\end{proof}
\begin{lemma}
The following limits holds
\begin{equation*}
\mathop {\lim \sup }\limits_{\varepsilon  \to 0} {c_\varepsilon } \le {c_{{V_0}}}.
\end{equation*}
Moreover, $\mathop {\lim  }\limits_{\varepsilon  \to 0} {c_\varepsilon } = {c_{{V_0}}}$.
\end{lemma}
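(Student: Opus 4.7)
The plan is to prove $\limsup_{\varepsilon\to 0} c_\varepsilon \le c_{V_0}$ by exhibiting, for each small $\varepsilon$, a concrete element of $\mathcal{N}_\varepsilon$ whose energy converges to $c_{V_0}$, and then to obtain the matching lower bound by a translation/concentration argument combined with Fatou's lemma.

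For the upper bound, Lemma \ref{pgs} supplies a positive ground state $w\in\mathcal{W}_0$ with $J_0(w)=c_{V_0}$. Fix $x_0\in M$ and $r>0$ so small that $B_r(x_0)\subset \Lambda$, and take $\eta\in \mathcal{C}_c^\infty(\mathbb{R}^N)$ with $\eta\equiv 1$ on $B_{r/2}(0)$ and $\mathrm{supp}\,\eta\subset B_r(0)$. Define the test function
$$v_\varepsilon(x):=\eta(\varepsilon x-x_0)\,w\!\left(x-\tfrac{x_0}{\varepsilon}\right),$$
so that $\varepsilon\cdot\mathrm{supp}(v_\varepsilon)\subset B_r(x_0)\subset\Lambda$, i.e., $\mathrm{supp}(v_\varepsilon)\subset\Lambda_\varepsilon$. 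After the change of variable $y=x-x_0/\varepsilon$ the function becomes $\eta(\varepsilon y)\,w(y)$, and Lemma \ref{Lion1} (applied with $\varsigma=1/\varepsilon\to\infty$) yields $\eta(\varepsilon\cdot)w\to w$ in $W^{s,p}(\mathbb{R}^N)$. Next, pick the (unique, by $(F_4)$) scalar $t_\varepsilon>0$ with $t_\varepsilon v_\varepsilon\in\mathcal{N}_\varepsilon$; arguing as in Lemma \ref{compare} one obtains $0<t_1\le t_\varepsilon\le t_2$, and using the strong convergence just noted together with monotonicity of the Nehari fiber map, one concludes $t_\varepsilon\to 1$. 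Since $\mathrm{supp}(v_\varepsilon)\subset\Lambda_\varepsilon$ the penalization is inactive on the support of $v_\varepsilon$, so $G(\varepsilon x,v_\varepsilon)=F(v_\varepsilon)+\tfrac{1}{p_s^\ast}v_\varepsilon^{p_s^\ast}$, while $V(\varepsilon x)\to V(x_0)=V_0$ uniformly on that support. Dominated convergence and the characterization (\ref{c_e}) then give
$$c_\varepsilon \;\le\; J_\varepsilon(t_\varepsilon v_\varepsilon)\;\longrightarrow\; J_0(w)\;=\;c_{V_0}.$$

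For the matching lower bound, take $\varepsilon_n\to 0$ and, via Ekeland on $\mathcal{N}_{\varepsilon_n}$ together with Proposition \ref{conv*}, a $(PS)_{c_{\varepsilon_n}}$ sequence $u_n\in\mathcal{N}_{\varepsilon_n}$. Lemma \ref{nonv} produces bounded points $z_n\in\mathbb{R}^N$ with $\int_{B_R(z_n)}|u_n|^p\,dx\ge\beta>0$, so along a subsequence $\varepsilon_n z_n\to y_0\in\overline{\Lambda}$ (the case $y_0\notin\overline{\Lambda}$ is ruled out using $(G_3)$-(ii) together with the upper bound $c_{\varepsilon_n}\le c_{V_0}+o(1)$ just established, since outside $\Lambda_\varepsilon$ the penalized problem only admits a vanishing contribution). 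The translated sequence $\tilde u_n:=u_n(\cdot+z_n)$ converges weakly to some $u\ne 0$ which, after passing to the limit in the Kirchhoff coefficient $[u_n]_{s,p}^{(\theta-1)p}\to B^{(\theta-1)p}$ and arguing as in Lemma \ref{pgs} (using $(F_4)$ to exclude the strict inequality $B^{(\theta-1)p}>[u]_{s,p}^{(\theta-1)p}$), solves the autonomous equation with constant potential $V(y_0)\ge V_0$. Fatou's lemma applied termwise in the identity $J_{\varepsilon_n}(u_n)-\tfrac{1}{\theta p}\langle J'_{\varepsilon_n}(u_n),u_n\rangle$ gives
$$c_{V_0}\;\le\;c_{V(y_0)}\;\le\;J_{V(y_0)}(u)\;\le\;\liminf_{n\to\infty}J_{\varepsilon_n}(u_n)\;=\;\liminf_{n\to\infty}c_{\varepsilon_n}.$$
The principal obstacle is the joint control of the concentration point $y_0$ and the Kirchhoff coefficient: one must exclude $y_0\in\partial\Lambda$ via the strict inequality $\min_{\partial\Lambda}V>V_0$ in $(V_2)$, and transfer the strong-convergence arguments of Lemmas \ref{conv1}--\ref{conv2} to the translated sequence in order to justify passing to the limit in the nonlocal leading term $(a+b[\cdot]_{s,p}^{(\theta-1)p})(-\Delta)_p^s u$.
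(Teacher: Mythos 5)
Your upper bound argument is essentially the paper's: both truncate the ground state $w$ of the autonomous problem (\ref{JV0}) by a cutoff supported in $\Lambda_\varepsilon$ (the paper uses $\omega_\varepsilon=\psi(\varepsilon\cdot)\omega$ with $\operatorname{supp}\psi\subset B_1\subset\Lambda$, implicitly normalizing so that $0\in M$, whereas you recenter at an explicit $x_0\in M$), invoke Lemma~\ref{Lion1} to get $\omega_\varepsilon\to\omega$ in $\mathcal{W}_0$, project onto $\mathcal{N}_\varepsilon$ via a scalar $t_\varepsilon\to 1$, note that the penalization is inactive on the support, and pass to the limit by dominated convergence. That part is fine and matches the paper's route.

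For the lower bound you take a genuinely different and much heavier path than the paper, and it is where the proposal has real gaps. The paper simply observes that $J_0(u)\le J_\varepsilon(u)$ for every $u$ (using $(G_2)$, which gives $G(\varepsilon x,t)\le F(t)+\tfrac{1}{p_s^*}(t^+)^{p_s^*}$, together with $V(\varepsilon x)\ge V_0$), so that the minimax/Nehari characterizations (\ref{c_e}) and (\ref{c_V0}) immediately yield $c_{V_0}\le c_\varepsilon$ for every $\varepsilon$; combined with the upper bound this gives $c_\varepsilon\to c_{V_0}$. Your proposal instead invokes a translation–concentration scheme to show $\liminf c_{\varepsilon_n}\ge c_{V_0}$, but the supporting lemmas are not directly applicable in the form you use them. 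Lemma~\ref{nonv} is proved for a $(PS)$ sequence of a \emph{fixed} $J_\varepsilon$ (the translations $z_n$ are bounded because $\Lambda_\varepsilon$ is fixed); for a sequence $u_n\in\mathcal N_{\varepsilon_n}$ with $\varepsilon_n\to 0$ the analogous nonvanishing and the boundedness of $\tilde y_n=\varepsilon_n z_n$ require the argument of Lemma~\ref{conc}, whose hypothesis ($J_{\varepsilon_n}(u_n)\to c_{V_0}$) is precisely what the present lemma is trying to establish. Moreover, after translating you would need to justify that the weak limit $u$ solves the autonomous equation with coefficient $B^{(\theta-1)p}$ replaced by $[u]_{s,p}^{(\theta-1)p}$ and is nontrivial, which duplicates the entire Lemma~\ref{pgs} machinery; the chain $c_{V_0}\le c_{V(y_0)}\le J_{V(y_0)}(u)\le\liminf J_{\varepsilon_n}(u_n)$ also requires that $u$ lie on $\mathcal N_{V(y_0)}$, which you do not verify. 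None of this is needed: replace the whole second half by the paper's one‑line monotonicity observation $J_0\le J_\varepsilon$.
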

\begin{proof}
For any $\varepsilon>0$ we set $\omega_\varepsilon(x):=\psi_\varepsilon(x)\omega(x)$, where $\omega$ is a positive ground state solution, and $\psi_\varepsilon(x):=\psi(\varepsilon x)$ with $\psi\in \mathcal{C}_0^\infty(\mathbb{R}^N)$, $\psi\in [0,1]$, $\psi(x)=1$ if $|x|\leq\frac{1}{2}$ and $\psi(x)=0$ if $|x|\geq1$. Here we assume that $supp(\psi)\subset B_1\subset\Lambda$. Using Lemma \ref{Lion} and the dominated convergence theorem we can see that $\omega_\varepsilon\to \omega$ in $\mathcal{W}_0$ and $J_0(\omega_\varepsilon)\to J_0(\omega)=c_{V_0}$ as $\varepsilon\to 0$. For each $\varepsilon>0$ there exists $t_\varepsilon>0$ such that
\begin{equation*}
J_\varepsilon(t_\varepsilon\omega_\varepsilon)=\mathop {\max }\limits_{t\geq0}J_\varepsilon(t\omega_\varepsilon).
\end{equation*}
Then, $J'_\varepsilon(t_\varepsilon\omega_\varepsilon)=0$ and this implies that
\begin{align}\label{t1}
&\frac{1}{t_\varepsilon^{(\theta-1)p}}\left(a[\omega_\varepsilon]_{s,p}^p+\int_{\mathbb{R}^N}V(\varepsilon x)\omega_\varepsilon^p dx\right)+b[\omega_\varepsilon]_{s,p}^{\theta p}\nonumber\\
=&\int_{{\mathbb{R}^N}}\frac{f(t_\varepsilon\omega_\varepsilon)}{(t_\varepsilon\omega_\varepsilon)^{\theta p-1}}{\omega_\varepsilon^{\theta p}}dx
+t_\varepsilon^{p_s^*-\theta p}\int_{\mathbb{R}^N}|\omega_\varepsilon|^{p_s^*}dx.
\end{align}
By $(F_1)-(F_4)$, $\omega\in \mathcal{W}_0$ and (\ref{t1}) it follows that $t_\varepsilon\to 1$ as $\varepsilon\to 0$. On the other hand,
\begin{align*}
c_\varepsilon\leq\mathop {\max }\limits_{t\geq0}J_\varepsilon(t\omega_\varepsilon)=J_\varepsilon(t_\varepsilon\omega_\varepsilon)=J_0(t_\varepsilon\omega_\varepsilon)
+\frac{t_\varepsilon^p}{p}\int_{{\mathbb{R}^N}}(V(\varepsilon x)-V_0)\omega_\varepsilon^p dx.
\end{align*}
Since $V(\varepsilon x)$ is bounded on the support of $\omega_\varepsilon$, by the dominated convergence theorem and the above inequality, we obtain the thesis.\

Moreover, by (\ref{c_e}), (\ref{c_V0}) and $J_0(u)\leq J_\varepsilon(u)$ for any $u\in \mathcal{W}_0$, we have $c_{V_0}\leq c_\varepsilon$. Then $c_{V_0}\leq \mathop {\lim \inf }\limits_{\varepsilon  \to 0} {c_\varepsilon }$. Therefore, $\mathop {\lim  }\limits_{\varepsilon  \to 0} {c_\varepsilon } = {c_{{V_0}}}$.
\end{proof}

\section{Multiple solutions for the Modified problem}
We consider $\delta> 0$ such that $M_\delta\subset \Lambda$. Let $w$ be a positive ground state solution of problem (\ref{JV0}), and choose $\psi\in \mathcal{C}_c^\infty(\mathbb{R}^+,[0,1])$ satisfying $\psi(s)=1$ if $0\leq s\leq\frac{\delta}{2}$, and $\psi(s)=0$  if $s\geq\delta$.\

For each $y\in M=\left\{x\in\Lambda: V(x)=V_0\right\}$, we define
\begin{equation*}
{\Upsilon _{\varepsilon ,y}}(x) = \psi (|\varepsilon x - y|)w\left(\frac{{\varepsilon x - y}}{\varepsilon }\right).
\end{equation*}
Then there exists $t_\varepsilon>0$ such that $\mathop {\max }\limits_{t \ge 0} {J_\varepsilon }(t{\Upsilon _{\varepsilon ,y}}) = {J_\varepsilon }({t_\varepsilon }{\Upsilon _{\varepsilon ,y}})$. Define $\Phi_\varepsilon: M\rightarrow\mathcal{N}_\varepsilon$ by $\Phi_\varepsilon(y)=t_\varepsilon\Upsilon_{\varepsilon,y}$.
\begin{lemma}\label{yM}
The following limits hold:
$\mathop {\lim }\limits_{\varepsilon  \to 0} {J_\varepsilon(\Phi_\varepsilon(y)) } = {c_{{V_0}}}$ uniformly in $y\in M$.
\end{lemma}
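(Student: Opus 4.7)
The plan is to argue by contradiction. Suppose the conclusion fails uniformly in $y\in M$. Then there exist $\delta_0>0$, a sequence $\varepsilon_n\to 0$ and points $y_n\in M$ with
\[
|J_{\varepsilon_n}(\Phi_{\varepsilon_n}(y_n))-c_{V_0}|\ge\delta_0.
\]
Since $M\subset\Lambda$ is bounded, up to a subsequence $y_n\to y_0\in\overline{M}$, and by continuity of $V$ together with the definition of $M$ we have $V(y_0)=V_0$. The strategy is to reduce the functional $J_{\varepsilon_n}(t_{\varepsilon_n}\Upsilon_{\varepsilon_n,y_n})$ to $J_0(w)=c_{V_0}$ by a change of variables and a careful passage to the limit, reaching a contradiction.

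The first step is to perform the translation $z=x-y_n/\varepsilon_n$, which has unit Jacobian and sends $\Upsilon_{\varepsilon_n,y_n}$ into $\widehat\Upsilon_n(z):=\psi(\varepsilon_n|z|)\,w(z)$. The Gagliardo seminorm is translation invariant, so
\[
[\Upsilon_{\varepsilon_n,y_n}]_{s,p}^p=[\widehat\Upsilon_n]_{s,p}^p,\qquad \|\Upsilon_{\varepsilon_n,y_n}\|_p^p=\|\widehat\Upsilon_n\|_p^p,
\]
and the potential term becomes $\int_{\mathbb R^N}V(\varepsilon_n z+y_n)\widehat\Upsilon_n^{\,p}\,dz$. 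Since $\psi(\varepsilon_n|\cdot|)\to 1$ pointwise and $0\le\psi\le 1$, an argument modelled on Lemma \ref{Lion1} (applied to the cutoff $1-\psi(\varepsilon_n|\cdot|)$, which is supported outside a ball of radius $\delta/(2\varepsilon_n)\to\infty$) shows that $\widehat\Upsilon_n\to w$ in $W^{s,p}(\mathbb{R}^N)$ and a.e. in $\mathbb{R}^N$. Moreover, because $y_n$ remains in the compact set $\overline{\Lambda}$ and $V$ is continuous, $V(\varepsilon_n z+y_n)\to V(y_0)=V_0$ pointwise, while $|V(\varepsilon_n z+y_n)|\le|V|_\infty$; hence by dominated convergence
\[
\int_{\mathbb R^N}V(\varepsilon_n z+y_n)\widehat\Upsilon_n^{\,p}\,dz\longrightarrow V_0\int_{\mathbb R^N}w^{p}\,dz.
\]

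The second step is to show $t_{\varepsilon_n}\to 1$. For $\varepsilon_n$ small enough, $\mathrm{supp}(\Upsilon_{\varepsilon_n,y_n})\subset\Lambda_{\varepsilon_n}$, so the equation $\langle J'_{\varepsilon_n}(t_{\varepsilon_n}\Upsilon_{\varepsilon_n,y_n}),\Upsilon_{\varepsilon_n,y_n}\rangle=0$ reads, analogously to (\ref{t1}),
\[
\frac{\|\Upsilon_{\varepsilon_n,y_n}\|_{\varepsilon_n}^{p}}{t_{\varepsilon_n}^{(\theta-1)p}}+b[\Upsilon_{\varepsilon_n,y_n}]_{s,p}^{\theta p}=\int_{\mathbb R^N}\frac{f(t_{\varepsilon_n}\widehat\Upsilon_n)}{(t_{\varepsilon_n}\widehat\Upsilon_n)^{\theta p-1}}\widehat\Upsilon_n^{\,\theta p}dz+t_{\varepsilon_n}^{p_s^\ast-\theta p}\int_{\mathbb R^N}\widehat\Upsilon_n^{\,p_s^\ast}dz.
\]
Using $(F_1)$, $(F_2)$ and the convergences just obtained, a standard argument (upper and lower estimates for $t_{\varepsilon_n}$ plus the monotonicity of $t\mapsto f(t)/t^{\theta p-1}$ given by $(F_4)$) forces any subsequential limit $t_0$ of $\{t_{\varepsilon_n}\}$ to satisfy $\langle J'_0(t_0w),w\rangle=0$. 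Since $w\in\mathcal N_0$ is a positive ground state and by $(F_4)$ the Nehari projection is unique, necessarily $t_0=1$.

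In the final step, combining the norm convergence $\widehat\Upsilon_n\to w$ in $W^{s,p}$, the potential convergence from dominated convergence, $t_{\varepsilon_n}\to 1$, and dominated convergence for the nonlinear terms (using $(F_2)$ and the $L^{p_s^\ast}$--equi-integrability of $\widehat\Upsilon_n$), we pass to the limit in each term of
\[
J_{\varepsilon_n}(\Phi_{\varepsilon_n}(y_n))=\frac{t_{\varepsilon_n}^{p}}{p}\|\Upsilon_{\varepsilon_n,y_n}\|_{\varepsilon_n}^{p}+\frac{b\,t_{\varepsilon_n}^{\theta p}}{\theta p}[\Upsilon_{\varepsilon_n,y_n}]_{s,p}^{\theta p}-\!\!\int_{\mathbb R^N}\!\!F(t_{\varepsilon_n}\widehat\Upsilon_n)dz-\frac{t_{\varepsilon_n}^{p_s^\ast}}{p_s^\ast}\!\!\int_{\mathbb R^N}\!\!\widehat\Upsilon_n^{\,p_s^\ast}dz
\]
and obtain $J_{\varepsilon_n}(\Phi_{\varepsilon_n}(y_n))\to J_0(w)=c_{V_0}$, contradicting the choice of $\delta_0$. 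The main obstacle is the second step: getting a clean proof that $t_{\varepsilon_n}\to 1$ (which needs uniform bounds on $t_{\varepsilon_n}$ away from $0$ and $\infty$ before one can invoke $(F_4)$ to identify the unique limit) combined with the need to handle the translating potential $V(\varepsilon_n z+y_n)$ with $y_n$ depending on $n$. The remaining steps are essentially verifications of convergence via dominated convergence and the translation invariance of the Gagliardo seminorm.
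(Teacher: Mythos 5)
Your proposal is correct and follows essentially the same route as the paper's proof: contradiction, the change of variable $z=x-y_n/\varepsilon_n$, translation invariance plus Lemma~\ref{Lion1} and dominated convergence to pass to the limit in the norms and nonlinear terms, boundedness of $t_{\varepsilon_n}$ away from $0$ and $\infty$ from the Nehari identity, and $(F_4)$ to force $t_{\varepsilon_n}\to 1$, concluding $J_{\varepsilon_n}(\Phi_{\varepsilon_n}(y_n))\to J_0(w)=c_{V_0}$. The "obstacle" you flag is resolved in the paper exactly as you anticipate: the lower bound on $t_{\varepsilon_n}$ comes from $t_{\varepsilon_n}\Upsilon_{\varepsilon_n,y_n}\in\mathcal N_{\varepsilon_n}$ together with the mountain-pass geometry (elements of $\mathcal N_\varepsilon$ have norm $\ge\rho$), and the upper bound from comparing the growth of the two sides of the Nehari identity.
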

\begin{proof}
Suppose by condition that there exist $\delta_0$, $y_n\in M$ and $\varepsilon_n\to 0$ such that
\begin{align}\label{35}
\left|J_{\varepsilon_n}(\Phi_{\varepsilon_n}(y_n))-c_{V_0}\right|\geq \delta_0.
\end{align}
Note that for each $n\in\mathbb{N}$ and for all $z\in B_{\frac{\delta}{\varepsilon_n}}(0)$, we have $\varepsilon_n z\in B_\delta(0)$. So
\begin{equation*}
\varepsilon_n z+y_n\in B_\delta(0)\subset M_\delta\subset\Lambda.
\end{equation*}
Using the definition of $\Phi_{\varepsilon_n}$ and making the change of the variable $z=x-\frac{y_n}{\varepsilon_n}$, we have
\begin{align}\label{36}
A_n:=&\frac{1}{{t_{{\varepsilon _n}}^{(\theta-1)p}}}\|\Upsilon_{\varepsilon_n,y_n}\|_{\varepsilon_n}^p+b[\Upsilon_{\varepsilon_n,y_n}]_{s,p}^{\theta p}\nonumber\\
=&\frac{1}{{t_{{\varepsilon _n}}^{\theta p - 1}}}\int_{\mathbb{R}^N}f({t_{{\varepsilon _n}}}\psi (|{\varepsilon _n}z|)w(z))\psi (|{\varepsilon _n}z|)w(z)dz+t_{{\varepsilon _n}}^{p_s^* - \theta p}\int_{\mathbb{R}^N}[\psi (|{\varepsilon _n}z|)w(z)]^{p_s^*}dz:=B_n.
\end{align}
By using Lemma \ref{Lion1}, we have
\begin{align}\label{37}
\|\Upsilon_{\varepsilon_n,y_n}\|_{\varepsilon_n}^p\rightarrow \|w\|_0^p.
\end{align}
And by Lebesgue's theorem, we can verify that
\begin{align}\label{38}
\int_{\mathbb{R}^N} f(\Upsilon_{\varepsilon_n,y_n})\Upsilon_{\varepsilon_n,y_n}\rightarrow \int_{\mathbb{R}^N} f(w)w,\ \ \int_{\mathbb{R}^N} |\Upsilon_{\varepsilon_n,y_n}|^{p_s^*}\rightarrow \int_{\mathbb{R}^N} |w|^{p_s^*}.
\end{align}
If $t_{\varepsilon_n}\rightarrow\infty$, then we get from (\ref{36}) that
\begin{align}\label{39}
B_n\geq\int_{B_{\delta/2}(0)}t_{{\varepsilon _n}}^{p_s^* - \theta p}[\psi (|{\varepsilon _n}z|)w(z)]^{p_s^*}\geq t_{{\varepsilon _n}}^{p_s^* - \theta p}k^{p_s^*}|{B_{\delta/2}(0)}|\rightarrow\infty
\end{align}
as $\varepsilon_n\rightarrow 0$, where $k:=\mathop {\inf }\limits_{x \in {B_{\delta /2}}(0)} w(x)$. However, $A_n$ in (\ref{36}) is bounded. This yields a contradiction. Hence, $\{t_{\varepsilon_n}\}$ is bounded and assume that $t_{\varepsilon_n}\rightarrow t_0\geq0$. Since $t_{\varepsilon_n}\Upsilon_{\varepsilon_n,y_n}\in \mathcal{N}_{\varepsilon_n}$, we can easily know $t_{\varepsilon_n}\|\Upsilon_{\varepsilon_n,y_n}\|_{{\varepsilon_n}}\geq \rho>0$. With the use of (\ref{37}), we have $t_0>0$. Since $t_{\varepsilon_n}\to t_0$, passing limit in (\ref{36}) we have
\begin{align*}
\frac{1}{{t_0^p}}\|w\|_0^{p}+b[w]_{s,p}^{\theta p}=\int_{\mathbb{R}^N}\frac{f(t_0w)}{{(t_0w)^{\theta p - 1}}}w^{\theta p}dz+t_0^{p_s^* - \theta p}\int_{\mathbb{R}^N}|w|^{p_s^*}dz.
\end{align*}
Observing that $w$ is a positive ground state solution of (\ref{JV0}), then by $(F_4)$ we know that $t_0=1$. Letting $n\to\infty$ and recalling $t_{\varepsilon_n}\rightarrow 1$, using (\ref{37}) and (\ref{38}) we get
\begin{align*}
\mathop {\lim }\limits_{n \to \infty } {J_{{\varepsilon _n}}}({\Phi _{{\varepsilon _n}}}) = {J_0}(w) = {c_{{V_0}}},
\end{align*}
which contradicts with (\ref{35}). The ends the proof.
\end{proof}

For any $\delta>0$, let $\rho=\rho(\delta)>0$ be such that $M_\delta\subset B_\rho(0)$. Let $\chi: \mathbb{R}^N\rightarrow\mathbb{R}^N$. Define $\chi(x)=x$ if $|x|<\rho$ and $\chi(x)=\frac{\rho x}{|x|}$ if $|x|\geq\rho$. Finally, we define $\beta_\varepsilon:\mathcal{N}_\varepsilon\rightarrow\mathbb{R}^N$ by
\begin{equation*}
{\beta_\varepsilon}(u):=\frac{\int_{\mathbb{R}^N}\chi(\varepsilon x)|u(x)|^pdx}{\int_{\mathbb{R}^N}|u(x)|^pdx}.
\end{equation*}
Since $M\subset B_\rho(0)$, by the definition of $\chi$ and Lebesgue's theorem, arguing as in Lemma $3.13$ in \cite{AI}, we conclude that
\begin{equation}\label{4.11}
\mathop {\lim }\limits_{\varepsilon  \to 0} {\beta_\varepsilon(\Phi_\varepsilon(y)) } = y\ \ \ \mbox{uniformly}\ \mbox{for}\ \ y\in M.
\end{equation}

\begin{lemma}\label{conc}
Let $\varepsilon_n\rightarrow 0$ and $\{u_n\}\subset \mathcal{N}_{\varepsilon_n}$ be such that $J_{\varepsilon_n}(u_n)\rightarrow c_{V_0}$. Then there exists a sequence $\{y_n\}\subset \mathbb{R}^N$ such that $v_n(x)=u_n(x+y_n)$ has a convergent subsequence in $\mathcal{W}_0$. Moreover, up to a subsequence, ${{\tilde y}_n}: = {\varepsilon _n}{y_n}\to y_0\in M$.
\end{lemma}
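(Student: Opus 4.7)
The plan is to construct $v$ as a weak-limit profile of the sequence $u_n$ along a translation $y_n$, locate the concentration point $y_0=\lim\varepsilon_n y_n$ inside $M$, and then upgrade weak to strong convergence in $\mathcal W_0$. First, combining Ekeland's variational principle on $\mathcal N_{\varepsilon_n}$ with Proposition \ref{conv*} (which yields the restricted $(PS)_c$ condition for $c<c_0$), I may assume that $\{u_n\}$ is a free Palais-Smale sequence for $J_{\varepsilon_n}$ at the level $c_{V_0}<c_0$. Lemma \ref{bdd} gives $\sup_n\|u_n\|_{\varepsilon_n}<\infty$, and the Lions-type non-vanishing step of Lemma \ref{nonv} — keeping only its first half, since here $\{y_n\}$ need not be bounded — produces $\{y_n\}\subset\mathbb R^N$ and $R,\beta>0$ with $\int_{B_R(y_n)}|u_n|^p\,dx\ge\beta$. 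Setting $v_n(x):=u_n(x+y_n)$ and using translation invariance of the seminorm, one obtains a bounded sequence in $W^{s,p}(\mathbb R^N)$ admitting, along a subsequence, a weak limit $v\not\equiv 0$ in $\mathcal W_0$ together with the convergence $[v_n]_{s,p}^{(\theta-1)p}\to B^{(\theta-1)p}$ for some $B^{(\theta-1)p}\ge[v]_{s,p}^{(\theta-1)p}$.

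Next I would prove $\tilde y_n:=\varepsilon_n y_n$ is bounded by contradiction. If $|\tilde y_n|\to\infty$ then, on every fixed ball, $\varepsilon_n x+\tilde y_n\notin\Lambda$ uniformly in $x$ once $n$ is large, so the penalized nonlinearity reduces to $\tilde f$, which by $(G_3)(ii)$ is bounded by $(V_0/K)t^{p-1}$. Testing the translated version of $J'_{\varepsilon_n}(u_n)=o(1)$ against cutoff approximations of $v_n$ and passing to the limit gives
\begin{equation*}
\bigl(a+bB^{(\theta-1)p}\bigr)[v]_{s,p}^p+\int_{\mathbb R^N}V_1|v|^p\,dx\le\frac{V_0}{K}\int_{\mathbb R^N}|v|^p\,dx,
\end{equation*}
and since $V_1\ge V_0$ and $K>2$ this forces $v\equiv 0$, a contradiction. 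Hence, up to a subsequence, $\tilde y_n\to y_0\in\mathbb R^N$. An analogous penalization argument rules out $y_0\notin\overline\Lambda$, so $y_0\in\overline\Lambda$. On every fixed ball the potential $V(\varepsilon_n x+\tilde y_n)\to V(y_0)$ locally uniformly and the penalized and genuine nonlinearities coincide for $n$ large, so $v$ solves the autonomous equation with $V(y_0)$ in place of $V_0$ and Kirchhoff coefficient $a+bB^{(\theta-1)p}$.

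To pin $y_0$ inside $M$ I would repeat the Fatou-type chain used in the proof of Lemma \ref{pgs}:
\begin{equation*}
c_{V_0}=\lim\Bigl(J_{\varepsilon_n}(u_n)-\tfrac{1}{\theta p}\langle J'_{\varepsilon_n}(u_n),u_n\rangle\Bigr)\ge J_{V(y_0)}(v)-\tfrac{1}{\theta p}\langle J'_{V(y_0)}(v),v\rangle\ge c_{V(y_0)}\ge c_{V_0},
\end{equation*}
where the last inequality uses the monotonicity $V\mapsto c_V$ together with $V(y_0)\ge V_0$, and the first follows from weak lower semicontinuity together with Fatou on the $\mathbb R^N\setminus\Lambda_{\varepsilon_n}$ part of the integral. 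Equality throughout forces $V(y_0)=V_0$, i.e., $y_0\in M$, and also $B^{(\theta-1)p}=[v]_{s,p}^{(\theta-1)p}$. With the latter, each term in the energy decomposes cleanly, and applying the Simon inequality of Lemma \ref{Simon} separately in the ranges $p\ge 2$ and $1<p<2$ (as in Lemma \ref{conv1}) upgrades $v_n\rightharpoonup v$ to $v_n\to v$ in $\mathcal W_0$. The main obstacle throughout is turning the weak-limit inequality $B^{(\theta-1)p}\ge[v]_{s,p}^{(\theta-1)p}$ into equality — if it were strict, the chain above would be strictly violated at $v$, as in the strict inequality (\ref{aaa}) — and this is exactly where the strict monotonicity provided by $(F_4)$ is used.
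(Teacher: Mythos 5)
Your proposal follows a genuinely different route from the paper's. The paper does \emph{not} first convert $\{u_n\}$ into a free Palais--Smale sequence: it simply shows boundedness and non-vanishing from the Nehari constraint alone, extracts a weak limit $v_n\rightharpoonup v\neq 0$, and then \emph{projects onto the fixed autonomous manifold} $\mathcal N_0$ by choosing $t_n>0$ with $t_n v_n\in\mathcal N_0$. The chain
\[
c_{V_0}\le J_0(t_nv_n)=J_0(t_nu_n)\le J_{\varepsilon_n}(t_nu_n)\le J_{\varepsilon_n}(u_n)=c_{V_0}+o_n(1)
\]
shows $\{t_nv_n\}\subset\mathcal N_0$ is a minimizing sequence for $J_0$ on $\mathcal N_0$, and Lemma \ref{pgs} then delivers \emph{strong} convergence $t_nv_n\to\tilde v$ in $\mathcal W_0$ in one stroke; boundedness of $\tilde y_n$ and $V(y_0)=V_0$ are derived afterwards \emph{using} this strong convergence. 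Your approach reverses the order (localize $y_0$, identify the limit equation, prove $V(y_0)=V_0$ and $B=[v]$, and only then upgrade to strong convergence) and in effect re-derives the content of Lemma \ref{pgs} inline for the translated problem with the unknown parameter $V(y_0)$, rather than invoking it as a black box. The paper's projection onto $\mathcal N_0$ is the main time-saver: it collapses the problem to a fixed functional, removing both the varying $\varepsilon_n$ and the unknown Kirchhoff constant $B$ at once.

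Two of your steps deserve more care. First, the opening Ekeland step is delicate because the functional $J_{\varepsilon_n}$ and the manifold $\mathcal N_{\varepsilon_n}$ change with $n$; it can be salvaged by applying the quantitative form of Ekeland to each $J_{\varepsilon_n}|_{\mathcal N_{\varepsilon_n}}$ at the data point $u_n$ (noting $J_{\varepsilon_n}(u_n)-c_{\varepsilon_n}\to 0$ since $c_{\varepsilon_n}\to c_{V_0}$), obtaining nearby $w_n$ with $\|w_n-u_n\|_{\varepsilon_n}\to 0$ and $J'_{\varepsilon_n}(w_n)\to 0$; the conclusion then transfers back to $u_n$ via the vanishing distance. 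This needs to be stated, as it is not the ``standard'' Ekeland-then-PS argument. Second, the final ``apply Simon's inequality'' step is substantially under-specified. The Simon-inequality argument of Lemma \ref{conv1} only yields strong convergence on bounded balls; passing to global convergence in $\mathcal W_0$ requires, in addition, either a tail estimate in the spirit of Lemma \ref{conv} for the translated sequence $\{v_n\}$, or the observation that equality in your monotonicity chain forces $\|v_n\|_0\to\|v\|_0$ (after which uniform convexity of $W^{s,p}$ finishes the job). As written, the step is a gap; with one of these ingredients supplied, the approach is sound.
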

\begin{proof}
Arguing as in the proof of Lemma \ref{bdd}, $\{u_n\}$ is bounded in $\mathcal{W}_0$ and non-vanishing. Then, there exist $R, \delta'>0$ and $y_n\in \mathbb{R}^N$ such that $\int_{B_R(y_n)}u_n^pdx>\delta'$. Define $v_n(x)=u_n(x+y_n)$, then passing to a subsequence, we assume $v_n\rightharpoonup v\neq 0$ in $\mathcal{W}_0$. Let $t_n>0$ be such that ${\tilde v_n} = {t_n}{v_n}\in \mathcal{N}_0$. Set ${{\tilde y}_n}: = {\varepsilon _n}{y_n}$. Then
\begin{equation*}
c_{V_0}\leq J_0({\tilde v_n})=J_0({t_n}{v_n})=J_0({t_n}{u_n})\leq J_{\varepsilon_n}({t_n}{u_n})\leq J_{\varepsilon_n}({u_n})=c_{V_0}+o_n(1).
\end{equation*}
Then $J_0({\tilde v_n})\rightarrow c_{V_0}$. Moreover, $\{\tilde v_n\}$ is bounded in $\mathcal{W}_0$ and $\tilde v_n\rightharpoonup\tilde v$. We assume that $t_n\to t^*>0$. From the uniqueness of the weak limit we have that $\tilde v=t^*v\not\equiv 0$. This combined with Lemma \ref{pgs} implies that $\tilde v_n\rightarrow\tilde v$ in $\mathcal{W}_0$. Consequently, $v_n\to v$ in $\mathcal{W}_0$ as $n\rightarrow\infty$.\

Next, we claim that $\{{\tilde y}_n\}$ is bounded. Indeed, suppose by contradiction that $|{\tilde y}_n|\to\infty$. Choose $R>0$ such that $\Lambda\subset B_R(0)$, then for $n$ large enough, we have $|{\tilde y}_n|>2R$, and for each $x\in B_{\frac{R}{\varepsilon_n}}(0)$, we have
\begin{equation*}
|\varepsilon_n x+{\tilde y}_n|>|{\tilde y}_n|-|\varepsilon_n x|\geq 2R-R=R.
\end{equation*}
Hence, using $v_n\rightarrow v$ in $\mathcal{W}_0$ and Lebesgue's theorem, we obtain
\begin{align*}
a[v_n]_{s,p}^p+\int_{{\mathbb{R}^N}}V_0 v_n^p dx\leq& a[v_n]_{s,p}^p+\int_{{\mathbb{R}^N}}V(\varepsilon_n x+{\tilde y}_n) v_n^p dx+b[v_n]_{s,p}^{\theta p}\\
=&\int_{B_{\frac{R}{\varepsilon_n}}(0)}g(\varepsilon_n x+{\tilde y}_n,v_n)v_n dx+\int_{{\mathbb{R}^N}\setminus {B_{\frac{R}{\varepsilon_n}}(0)}}g(\varepsilon_n x+{\tilde y}_n,v_n)v_n dx\\
\leq&\frac{V_0}{K}\int_{\mathbb{R}^N} v_n^p dx+\int_{{\mathbb{R}^N}\setminus {B_{\frac{R}{\varepsilon_n}}(0)}}\left(f(v_n)v_n+v_n^{p_s^*}\right)dx\\
\leq& \frac{V_0}{K}\int_{\mathbb{R}^N} v_n^p dx+o_n(1).
\end{align*}
Then $v_n\to 0$ in $\mathcal{W}_0$, which contradicts to $v\not\equiv0$. Therefore, $\{{\tilde y}_n\}$ is bounded and we may assume that ${\tilde y}_n\to y_0\in \mathbb{R}^N$. If $y_0\notin {\bar \Lambda }$, as before we can infer that $v_n\to 0$ in $\mathcal{W}_0$. This is impossible. Thus we have that $y_0\in {\bar \Lambda }$.\

In order to prove that $V(y_0)=V_0$, we can suppose by contradiction that $V_0<V(y_0)$. Then from $\tilde v_n\rightarrow\tilde v$ in $\mathcal{W}_0$ and the invariance of $\mathbb{R}^N$ by translation, to obtain
\begin{align*}
c_{V_0}=&J_0(\tilde v)\\
<&\mathop {\lim\inf }\limits_{n  \to \infty} \left\{\frac{a}{p}[\tilde v_n]_{s,p}^p+\frac{1}{p}\int_{{\mathbb{R}^N}}V(\varepsilon_n (x+{ y}_n)) |\tilde v_n|^pdx+\frac{b}{\theta p}[\tilde v_n]_{s,p}^{\theta p}-\int_{{\mathbb{R}^N}}F(\tilde v_n)dx-\frac{1}{p_s^*}|\tilde v_n^+|^{p_s^*}dx\right\}\\
\leq&\mathop {\lim\inf }\limits_{n  \to \infty} \left\{\frac{at_n^p}{p}[u_n]_{s,p}^p+\frac{t_n^p}{p}\int_{{\mathbb{R}^N}}V(\varepsilon_n z) |u_n|^pdx+\frac{bt_n^{\theta p}}{\theta p}[u_n]_{s,p}^{\theta p}-\int_{{\mathbb{R}^N}}G(\varepsilon_nz,t_nu_n)dx\right\}\\
=&\mathop {\lim\inf }\limits_{n  \to \infty} J_{\varepsilon_n}(t_nu_n)\leq \mathop {\lim\inf }\limits_{n  \to \infty} J_{\varepsilon_n}(u_n)=c_{V_0},
\end{align*}
which yields a contradiction. Therefore, $V(y_0)=V_0$ and $z_0\in M$. The condition $(V_2)$ shows that $y_0\notin \partial M$, so, $y_0\in M$. The proof is complete.
\end{proof}

Let $h(\varepsilon)$ be any positive function satisfying $h(\varepsilon)\to 0$ as $\varepsilon\to 0$. Define the set
\begin{equation*}
\Sigma_\varepsilon = \left\{u\in \mathcal{N}_\varepsilon: J_\varepsilon(u)\leq c_{V_0}+h(\varepsilon)\right\}.
\end{equation*}
For any $y\in M$, we deduce from Lemma \ref{yM} that $h(\varepsilon)=\left|J_\varepsilon(\Phi_\varepsilon(y)))-c_{V_0}\right|\rightarrow 0$ as $\varepsilon\rightarrow 0$. Thus $\Phi_\varepsilon(y)\in\Sigma_\varepsilon$ and $\Sigma_\varepsilon\neq\emptyset$ for $\varepsilon>0$. Moreover, as in Lemma $3.14$ in \cite{AI}, we have the following lemma
\begin{lemma}\label{bM}
For any $\delta>0$, there holds that $\mathop {\lim }\limits_{\varepsilon  \to 0} \mathop {\sup }\limits_{u \in \Sigma_\varepsilon} dist({\beta _\varepsilon }(u) - {M_\delta }) = 0$.
\end{lemma}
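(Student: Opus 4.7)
The plan is to argue by contradiction and reduce to the compactness statement already obtained in Lemma \ref{conc}. Suppose the conclusion fails. Then there exist $\delta_0>0$, a sequence $\varepsilon_n\to 0$ and $u_n\in\Sigma_{\varepsilon_n}$ with
\begin{equation*}
\mathrm{dist}(\beta_{\varepsilon_n}(u_n),M_\delta)\ge \delta_0\quad\text{for all }n\in\mathbb{N}.
\end{equation*}
Since $u_n\in\mathcal{N}_{\varepsilon_n}$ and $J_{\varepsilon_n}(u_n)\le c_{V_0}+h(\varepsilon_n)$ with $h(\varepsilon_n)\to 0$, we have that $\{u_n\}$ is a sequence in $\mathcal{N}_{\varepsilon_n}$ with $J_{\varepsilon_n}(u_n)\to c_{V_0}$ (combined with the lower bound $c_{\varepsilon_n}\ge c_{V_0}$ that follows, as in the previous section, from comparing $J_0$ and $J_{\varepsilon_n}$). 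Hence Lemma \ref{conc} applies: there exist $\{y_n\}\subset\mathbb{R}^N$ such that $v_n(x):=u_n(x+y_n)$ converges in $\mathcal{W}_0$ (to some $v\ne 0$) and, up to a subsequence, $\tilde y_n:=\varepsilon_n y_n\to y_0\in M$.

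Next I would compute $\beta_{\varepsilon_n}(u_n)$ explicitly via the translation $x\mapsto x+y_n$:
\begin{equation*}
\beta_{\varepsilon_n}(u_n)=\frac{\int_{\mathbb{R}^N}\chi(\varepsilon_n x+\tilde y_n)\,|v_n(x)|^p\,dx}{\int_{\mathbb{R}^N}|v_n(x)|^p\,dx}
=y_0+\frac{\int_{\mathbb{R}^N}[\chi(\varepsilon_n x+\tilde y_n)-y_0]\,|v_n(x)|^p\,dx}{\int_{\mathbb{R}^N}|v_n(x)|^p\,dx}.
\end{equation*}
Since $\chi$ is bounded and continuous, $v_n\to v$ in $\mathcal{W}_0$ (hence strongly in $L^p(\mathbb{R}^N)$ up to subsequence), and $\chi(\varepsilon_n x+\tilde y_n)\to \chi(y_0)=y_0$ pointwise (because $y_0\in M\subset\Lambda\subset B_\rho(0)$, so $\chi$ is the identity near $y_0$), the dominated convergence theorem yields that the quotient on the right tends to $0$. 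Consequently $\beta_{\varepsilon_n}(u_n)\to y_0\in M\subset M_\delta$, contradicting the assumption that $\mathrm{dist}(\beta_{\varepsilon_n}(u_n),M_\delta)\ge \delta_0$.

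The only point that needs some care is the uniform bound on the remainder integral that justifies the use of dominated convergence, namely writing $\lvert\chi(\varepsilon_n x+\tilde y_n)-y_0\rvert\,|v_n|^p \le 2\rho\,|v_n|^p$ and then invoking the strong $L^p$-convergence $v_n\to v$ together with the pointwise convergence $\chi(\varepsilon_n x+\tilde y_n)\to y_0$. The denominator stays bounded away from zero because $v\ne 0$ in $\mathcal{W}_0$, so the Nehari constraint forces $\|v_n\|_p$ to have a positive lower bound. This is the main (and only) technical point; once it is in place, the proof is a direct combination of Lemma \ref{conc} with the definition of $\beta_\varepsilon$.
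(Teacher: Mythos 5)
The paper itself does not supply a proof of this lemma; it merely cites \cite{AI}. Your argument is the standard one for this kind of "barycenter localizes near $M$" statement, and it is correct. The reduction by contradiction to a sequence $u_n\in\mathcal{N}_{\varepsilon_n}$ with $J_{\varepsilon_n}(u_n)\to c_{V_0}$ is exactly right: the upper bound comes from the definition of $\Sigma_\varepsilon$, and the matching lower bound $c_{V_0}\le c_{\varepsilon_n}\le J_{\varepsilon_n}(u_n)$ was established in the autonomous section. Lemma \ref{conc} then produces translations $y_n$ with $v_n=u_n(\cdot+y_n)\to v\neq 0$ in $\mathcal{W}_0$ and $\tilde y_n:=\varepsilon_n y_n\to y_0\in M$, and your translated representation of $\beta_{\varepsilon_n}(u_n)$, together with boundedness of $\chi$ (range in $\overline{B_\rho}$), the identity $\chi(y_0)=y_0$ (valid because $M\subset M_\delta\subset B_\rho(0)$), strong $L^p$-convergence, and dominated convergence, gives $\beta_{\varepsilon_n}(u_n)\to y_0\in M\subset M_\delta$, contradicting $\mathrm{dist}(\beta_{\varepsilon_n}(u_n),M_\delta)\ge\delta_0$.

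One small inaccuracy in the narration: you say that "the Nehari constraint forces $\|v_n\|_p$ to have a positive lower bound." That is not quite the mechanism. The lower bound on the denominator comes directly from the strong convergence $v_n\to v\neq 0$ in $\mathcal{W}_0\hookrightarrow L^p(\mathbb{R}^N)$, hence $\|v_n\|_p\to\|v\|_p>0$. (The Nehari manifold only gives a uniform lower bound on the $\|\cdot\|_{\varepsilon}$-norm, not directly on the $L^p$-norm; the role of the Nehari/non-vanishing argument is already embedded in Lemma \ref{conc}, which is what guarantees $v\neq 0$.) This is a cosmetic point and does not affect the validity of the proof.
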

\begin{lemma}\label{4.3}\cite{KCC}
Let $I$ be a $C^1$-functional defined on a $C^1$-Finsler manifold $\nu$. If $I$ is bounded from below and satisfies the $(PS)$ condition, then $I$ has at least $cat_{\nu}(\nu)$ distinct critical points.
\end{lemma}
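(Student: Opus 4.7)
The plan is to follow the classical Lusternik--Schnirelmann minimax scheme adapted to the $C^1$-Finsler manifold setting. For each integer $k$ with $1 \le k \le cat_\nu(\nu)$, I would introduce the family
\begin{equation*}
\mathcal{A}_k := \{A \subset \nu : A \text{ is closed and } cat_\nu(A) \ge k\}
\end{equation*}
and define the minimax values
\begin{equation*}
c_k := \inf_{A \in \mathcal{A}_k} \sup_{u \in A} I(u).
\end{equation*}
Since $\nu \in \mathcal{A}_k$ for every admissible $k$ and since $I$ is bounded from below, each $c_k$ is a well-defined real number, and the monotonicity of $\mathcal{A}_k$ in $k$ gives $c_1 \le c_2 \le \cdots \le c_{cat_\nu(\nu)}$.

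The main task is to show that each $c_k$ is a critical value of $I$, and that whenever $c_k = c_{k+1} = \cdots = c_{k+m}$ for some $m \ge 1$, the critical set at that level has Lusternik--Schnirelmann category at least $m+1$. Both facts are consequences of a deformation lemma on the Finsler manifold $\nu$. The plan is to construct a locally Lipschitz pseudo-gradient vector field for $I$ on the set of regular points (this is where the $C^1$-Finsler structure is used, via a partition of unity argument) and then follow the flow of this vector field. Combining with the $(PS)$ condition, one obtains, for every $c \in \mathbb{R}$ and every neighborhood $\mathcal{U}$ of the critical set $K_c := \{u \in \nu : I'(u)=0,\, I(u)=c\}$, a number $\varepsilon > 0$ and a continuous deformation $\eta:\nu \to \nu$ satisfying $\eta(\{I \le c+\varepsilon\} \setminus \mathcal{U}) \subset \{I \le c-\varepsilon\}$.

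Given this deformation, the remainder is a routine category argument. If $c_k$ were not a critical value, then $K_{c_k} = \emptyset$, so taking $\mathcal{U} = \emptyset$ yields a deformation $\eta$ pushing $\{I \le c_k+\varepsilon\}$ into $\{I \le c_k-\varepsilon\}$; selecting $A \in \mathcal{A}_k$ with $\sup_A I \le c_k+\varepsilon$ and noting that $\eta(A) \in \mathcal{A}_k$ (since continuous maps do not decrease category) contradicts the definition of $c_k$. If instead $c_k = \cdots = c_{k+m} =: c$ but $cat_\nu(K_c) \le m$, one chooses $\mathcal{U}$ to be a closed neighborhood of $K_c$ with the same category as $K_c$ (here one uses that on an ANR the category is stable under small thickenings), obtains the corresponding deformation $\eta$, picks $A \in \mathcal{A}_{k+m}$ with $\sup_A I \le c+\varepsilon$, and deduces $cat_\nu(\overline{A \setminus \mathcal{U}}) \ge k$, so that $\eta(\overline{A\setminus \mathcal{U}})$ violates the definition of $c_k$.

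The main obstacle will be setting up the deformation lemma intrinsically on a $C^1$-Finsler manifold rather than on a Banach space: one must verify the existence of a pseudo-gradient vector field for $I$ on regular points (which on a Finsler manifold requires a careful use of the Finsler norm on the cotangent bundle) and check that the induced flow is well-defined globally, a step where the $(PS)$ condition is crucial to rule out orbits escaping to infinity. Once this analytic core is established, the category-theoretic conclusion $cat_\nu(\nu)$ critical points follows cleanly from the minimax construction above.
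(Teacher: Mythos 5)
The paper does not prove this lemma; it is quoted as a black box from Chang's monograph \cite{KCC}, so there is no in-paper argument to compare against. Your sketch is precisely the classical Ljusternik--Schnirelmann minimax scheme underlying Chang's proof: minimax values $c_k$ over the families $\mathcal{A}_k$ of closed sets of category at least $k$, a deformation lemma built from a locally Lipschitz pseudo-gradient vector field on the Finsler manifold (constructed via a partition of unity, with $(PS)$ guaranteeing the flow stays away from a level and is defined long enough), and the multiplicity argument $cat_\nu(K_c)\ge m+1$ when $c_k=\cdots=c_{k+m}$. Structurally this is the right proof.

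One step deserves tightening. Boundedness of $I$ from below together with $\nu\in\mathcal{A}_k$ only gives $c_k>-\infty$; it does \emph{not} by itself give $c_k<+\infty$, since that requires the existence of a closed set of category $\ge k$ on which $I$ is bounded above, which is not automatic on a noncompact manifold. In the paper's application the issue is harmless because the lemma is invoked on $\Sigma_\varepsilon=\{u\in\mathcal{N}_\varepsilon : J_\varepsilon(u)\le c_{V_0}+h(\varepsilon)\}$, where the functional is bounded above by construction, so all $c_k$ are finite. As a proof of the abstract lemma, however, you should either argue the finiteness of each $c_k$ (or restrict $k$ to those for which $c_k<\infty$ and handle the remaining indices separately via a direct deformation argument on sublevel sets), or note explicitly that this is part of the hypotheses in the precise version you are following. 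With that caveat filled in, the outline is sound and matches the standard Palais--Chang argument.
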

\begin{lemma}\label{4.4}\cite{BC}
Let $\Gamma$, $\Omega^+$, $\Omega^-$ be closed sets with $\Omega^-\subset\Omega^+$. Let $\Phi:\Omega^-\rightarrow \Gamma$, $\beta:\Gamma\rightarrow\Omega^+$ be two continuous maps such that $\beta\circ\Phi$ is homotopically equivalent to the embedding $Id: \Omega^-\rightarrow\Omega^+$. Then $cat_{\Gamma}(\Gamma)\geq cat_{\Omega^+}(\Omega^-)$.
\end{lemma}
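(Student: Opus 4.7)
\medskip

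\noindent\textbf{Proof plan for Lemma \ref{4.4}.} The plan is to pull back a categorical cover of $\Gamma$ through $\Phi$, use $\beta$ composed with the local contractions of $\Gamma$ to land in $\Omega^+$, and finally use the global homotopy $\beta\circ\Phi\simeq Id$ to convert this into a contraction in $\Omega^+$ that starts at the identity of each pulled-back piece. This will produce a cover of $\Omega^-$ by sets that are closed in $\Omega^+$ and contractible in $\Omega^+$, yielding the desired inequality.

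\medskip

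\noindent Set $k:=cat_\Gamma(\Gamma)$. First I would fix a covering $\Gamma=A_1\cup\cdots\cup A_k$, where each $A_j$ is closed in $\Gamma$ and contractible in $\Gamma$, so that there exist continuous maps $h_j:A_j\times[0,1]\to\Gamma$ and points $\gamma_j\in\Gamma$ with $h_j(\cdot,0)=Id_{A_j}$ and $h_j(\cdot,1)\equiv\gamma_j$. Define $B_j:=\Phi^{-1}(A_j)\subset\Omega^-$ for $j=1,\ldots,k$. By continuity of $\Phi$ and the fact that each $A_j$ is closed in $\Gamma$, the set $B_j$ is closed in $\Omega^-$; since $\Omega^-$ is closed in $\Omega^+$, we get that $B_j$ is closed in $\Omega^+$, and clearly $\Omega^-=B_1\cup\cdots\cup B_k$.

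\medskip

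\noindent Next I would build an explicit contraction of each $B_j$ inside $\Omega^+$. Let $H:\Omega^-\times[0,1]\to\Omega^+$ be a homotopy realizing $\beta\circ\Phi\simeq i$, where $i:\Omega^-\hookrightarrow\Omega^+$ is the inclusion, i.e.\ $H(\cdot,0)=\beta\circ\Phi$ and $H(\cdot,1)=i$. Define $\widetilde H_j:B_j\times[0,1]\to\Omega^+$ by concatenating two pieces: on $[0,1/2]$ use the reversed homotopy $u\mapsto H(u,1-2t)$ to move from $Id_{B_j}$ to $\beta\circ\Phi|_{B_j}$; on $[1/2,1]$ use $u\mapsto \beta\bigl(h_j(\Phi(u),2t-1)\bigr)$ to move from $\beta\circ\Phi|_{B_j}$ to the constant $\beta(\gamma_j)\in\Omega^+$. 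Both pieces agree at $t=1/2$ and are continuous, so $\widetilde H_j$ is a genuine contraction of $B_j$ in $\Omega^+$ to the point $\beta(\gamma_j)$.

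\medskip

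\noindent Combining the two steps, $\Omega^-$ is covered by the $k$ closed sets $B_1,\ldots,B_k$, each contractible in $\Omega^+$. By definition of Ljusternik--Schnirelmann category this yields $cat_{\Omega^+}(\Omega^-)\le k=cat_\Gamma(\Gamma)$, which is exactly the claim. The main conceptual point, and the one I would be most careful about, is the distinction between contractibility in $\Gamma$ (given only on the pieces $A_j$) and contractibility in $\Omega^+$ (required for the $B_j$): the homotopy $H$ is precisely the bridge that upgrades the local contractions of $\Gamma$ into contractions of $B_j$ in $\Omega^+$ that start at $Id_{B_j}$ rather than at $\beta\circ\Phi|_{B_j}$. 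The only other mild technicality is the continuity of the concatenated homotopy at $t=1/2$, which is immediate from the matching endpoints.
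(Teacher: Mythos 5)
Your proof is correct and is the standard argument for this fact (the paper itself cites Benci--Cerami \cite{BC} without reproducing the proof). Pulling back a minimal categorical cover of $\Gamma$ by $\Phi$, noting that the preimages are closed in $\Omega^+$ because they are closed in the closed set $\Omega^-$, and then concatenating the reversed homotopy $H$ from $Id$ to $\beta\circ\Phi$ with $\beta$ applied to the contractions of the $A_j$ in $\Gamma$ is exactly the classical proof; the pasting at $t=1/2$ and the observation that $\beta$ pushes the contraction of $A_j$ in $\Gamma$ into $\Omega^+$ are the two points you correctly flag as the only things requiring care.
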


\begin{proposition}
For any $\delta>0$ such that $M_\delta\subset\Lambda$, there exists ${{\bar \varepsilon }_\delta }>0$ such that, for any $\varepsilon\in(0,{{\bar \varepsilon }_\delta })$, problem (\ref{1.1*}) has at least $cat_{M_\delta}(M)$ positive solutions.
\end{proposition}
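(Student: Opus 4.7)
The plan is to combine the two abstract tools, Lemmas \ref{4.3} and \ref{4.4}, applied to $J_\varepsilon$ restricted to the sublevel set $\Sigma_\varepsilon$ on the Nehari manifold $\mathcal{N}_\varepsilon$. Fix $\delta>0$ with $M_\delta\subset\Lambda$. The continuous maps I intend to use are
\[
\Phi_\varepsilon:M\longrightarrow\Sigma_\varepsilon,\qquad \beta_\varepsilon:\Sigma_\varepsilon\longrightarrow M_\delta,
\]
where $\Phi_\varepsilon$ is the barycenter-type map introduced just before Lemma \ref{yM}, and $\beta_\varepsilon$ is the center-of-mass functional whose codomain is forced into $M_\delta$ for $\varepsilon$ small by Lemma \ref{bM}. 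The fact that $\Phi_\varepsilon$ lands in $\Sigma_\varepsilon$ for $\varepsilon<\bar\varepsilon_\delta$ with a suitable choice of $h(\varepsilon)$ is exactly Lemma \ref{yM}, so both maps are well defined.

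Next I would verify that $\beta_\varepsilon\circ\Phi_\varepsilon$ is homotopic to the inclusion $\mathrm{Id}:M\hookrightarrow M_\delta$. The natural homotopy is
\[
H(t,y):=\beta_\varepsilon\bigl(t_{\varepsilon,y}\,\psi(|(1-t)(\varepsilon x-y)|)\,w((\varepsilon x-y)/\varepsilon)\bigr),
\]
or, more simply, one uses the uniform convergence $\beta_\varepsilon(\Phi_\varepsilon(y))\to y$ in $y\in M$ from \eqref{4.11}: the straight-line deformation $H(t,y)=(1-t)\beta_\varepsilon(\Phi_\varepsilon(y))+ty$ stays inside $M_\delta$ provided $\varepsilon$ is small, because $|\beta_\varepsilon(\Phi_\varepsilon(y))-y|<\delta$ uniformly. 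Hence Lemma \ref{4.4}, with $\Gamma=\Sigma_\varepsilon$ and $\Omega^-=M\subset\Omega^+=M_\delta$, yields
\[
\mathrm{cat}_{\Sigma_\varepsilon}(\Sigma_\varepsilon)\ \ge\ \mathrm{cat}_{M_\delta}(M).
\]

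To convert this count into critical points, I would apply Lemma \ref{4.3} to the restriction $J_\varepsilon|_{\mathcal{N}_\varepsilon}$. Since every element of $\mathcal{N}_\varepsilon$ satisfies $\|u\|_\varepsilon\ge\rho>0$ (from the geometry in Lemma \ref{mpt}) and the Ambrosetti--Rabinowitz-type condition $(G_3)$ gives the coercivity-on-Nehari bound $J_\varepsilon(u)\ge c_0^{\prime}\|u\|_\varepsilon^p$ used in the proof of Lemma \ref{bdd}, we have that $J_\varepsilon$ is bounded below on $\mathcal{N}_\varepsilon$, and $\Sigma_\varepsilon$ is complete. Choosing $h(\varepsilon)$ so small that $c_{V_0}+h(\varepsilon)<c_0$ (possible by Lemma \ref{compare} together with $c_{V_0}<c_0$), Proposition \ref{conv*} gives the $(PS)_c$ condition on $\mathcal{N}_\varepsilon$ at every level $c\le c_{V_0}+h(\varepsilon)$, so $J_\varepsilon|_{\Sigma_\varepsilon}$ satisfies (PS). Lemma \ref{4.3} then furnishes at least $\mathrm{cat}_{\Sigma_\varepsilon}(\Sigma_\varepsilon)\ge\mathrm{cat}_{M_\delta}(M)$ critical points of $J_\varepsilon|_{\mathcal{N}_\varepsilon}$; since $\mathcal{N}_\varepsilon$ is a natural constraint, these are critical points of $J_\varepsilon$ in $\mathcal{W}_\varepsilon$, hence weak solutions of (\ref{1.2}).

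Finally, positivity of these solutions follows the same scheme as in the proof of Lemma \ref{pgs}: test with $u^-$ and use $f(t)=0$ for $t\le 0$ together with the pointwise inequality $|a-b|^{p-2}(a-b)(a^--b^-)\ge|a^--b^-|^p$ to force $u^-\equiv0$; then the strong maximum principle from \cite{PMQ} applied to the fractional $p$-Laplacian gives $u>0$. The main obstacle in carrying out this plan is the verification of the homotopy $H$ — specifically, guaranteeing that, when we deform $\Phi_\varepsilon(y)$ back toward $M$, the image under $\beta_\varepsilon$ remains in $M_\delta$ uniformly in $y\in M$ and uniformly for $\varepsilon<\bar\varepsilon_\delta$. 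This is where the uniform estimate \eqref{4.11} and the sharp choice of $\bar\varepsilon_\delta$ dictated by Lemmas \ref{yM} and \ref{bM} are crucial.
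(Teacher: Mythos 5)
Your proof follows essentially the same route as the paper: map $M\xrightarrow{\Phi_\varepsilon}\Sigma_\varepsilon\xrightarrow{\beta_\varepsilon}M_\delta$, use Lemma \ref{yM} to land $\Phi_\varepsilon$ in $\Sigma_\varepsilon$, the uniform estimate \eqref{4.11} to build the straight-line homotopy between $\beta_\varepsilon\circ\Phi_\varepsilon$ and the inclusion, then Lemmas \ref{4.4} and \ref{4.3} together with the $(PS)$ condition of Proposition \ref{conv*} at levels below $c_0$. The only substantive addition is the explicit positivity argument at the end, which the paper leaves implicit here (it appears in the proof of Lemma \ref{pgs}); the rejected first homotopy formula you wrote is not sensible, but you immediately replaced it with the correct linear interpolation, which is exactly what the paper uses.
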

\begin{proof}
Given $\delta>0$ such that $M_\delta\subset\Lambda$, we can use Lemma \ref{yM}, Lemma \ref{bM} and (\ref{4.11}) to obtain ${{\bar \varepsilon }_\delta }>0$ such that for any $\varepsilon\in(0,{{\bar \varepsilon }_\delta })$, the diagram
\begin{equation*}
 M\xrightarrow{{\Phi _\varepsilon }}{\Sigma _\varepsilon }\xrightarrow{\beta_\varepsilon}{M_\delta }
\end{equation*}
is well defined. In view of (\ref{4.11}), for $\varepsilon$ small enough, we can denote by $\beta_\varepsilon(\Phi_\varepsilon(y))=y+\theta(y)$ for $y\in M$, where $|\theta(y)|<\delta'/2$ uniformly for $y\in M$. Define $S(t,y)=y+(1-t)\theta(y)$. Thus $S:[0,1]\times M\rightarrow M_\delta$ is continuous. Obviously, $S(0,y)=\beta_\varepsilon(\Phi_\varepsilon(y))$ and $S(1,y)=y$ for all $y\in M$. That is, $\beta_\varepsilon \circ {{\Phi _\varepsilon }}$ is homotopically equivalent to $Id: M\rightarrow M_\delta$. By Lemma \ref{4.4}, we obtain that
\begin{equation*}
\mathop {cat}\limits_{\Sigma_\varepsilon} ({\Sigma_\varepsilon})\geq\mathop {cat}\limits_{M_\delta} ({M_\delta}).
\end{equation*}
Since $c_{V_0}\leq c_0$, we can use the definition of $\Sigma_\varepsilon$ and Proposition \ref{conv*} to conclude that $J_\varepsilon$ satisfies $(PS)$ condition in $\Sigma_\varepsilon$ for all small $\varepsilon>0$. Therefore, Lemma \ref{4.3} proves at least $\mathop {cat}\limits_{\Sigma_\varepsilon} ({\Sigma_\varepsilon})$ critical points of $J_\varepsilon$ restricted to ${\Sigma_\varepsilon}$. Using the same arguments as in the proof of Proposition \ref{conv*}, we can conclude that a critical point of the functional $J_\varepsilon$ on $\mathcal{N}_\varepsilon$ is $\mathcal{W}_\varepsilon$ and therefore a week solution for the problem the theorem is proved.
\end{proof}

\section{Proof of the main result}
In the last section, we provide the proof of our main result. Firstly, we develop a iteration scheme \cite{Moser} which will be the main key to deduce that the solutions to (\ref{1.2}) are indeed solutions to (\ref{1.1*}).
\begin{lemma}\label{estim}
Let $\varepsilon_n\to 0$ and $\{u_n\}\in\mathcal{N}_{\varepsilon_n}$ be a solution to (\ref{1.2}). Then $v_n(x)=u_n(\cdot+y_n)$ satisfies $v_n\in L^\infty(\mathbb{R}^N)$ and there exists $C>0$ such that
\begin{equation*}
|v_n|_\infty\leq C\ \ \text{for}\ \text{all}\ n\in\mathbb{N},
\end{equation*}
where $y_n$ was given by Lemma \ref{conc}. Moreover
\begin{equation*}
\mathop {\lim }\limits_{|x| \to \infty } {v_n}(x) = 0\ \ \text{uniformly}\ \text{in}\ n\in\mathbb{N}.
\end{equation*}
\end{lemma}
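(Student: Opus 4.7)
After the translation $v_n(x) = u_n(x+y_n)$, the function $v_n$ satisfies
$$M_n (-\Delta)_p^s v_n + V(\varepsilon_n(x+y_n)) v_n^{p-1} = g(\varepsilon_n(x+y_n), v_n) \quad \text{in } \mathbb{R}^N,$$
where $M_n := a + b[v_n]_{s,p}^{(\theta-1)p}$. The first observation is that, by Lemma \ref{conc}, $v_n \to v$ in $\mathcal{W}_0$, so $\{v_n\}$ is bounded in $W^{s,p}(\mathbb{R}^N)$ and $a \le M_n \le C$ uniformly in $n$. This lets me treat the equation as a critical fractional $p$-Laplacian equation with uniformly elliptic leading coefficient, and the proof reduces to two steps: a Moser iteration for the $L^\infty$ bound, and a localized version of it for the decay at infinity.

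For the $L^\infty$ bound, I would run a Moser iteration adapted to $(-\Delta)_p^s$. For $L>0$ and $\beta\ge 1$ set $v_{n,L}:=\min\{v_n,L\}$ and use $\varphi_L := v_n v_{n,L}^{p(\beta-1)}$ as a test function. By an algebraic inequality of Franzina--Palatucci type (cf.\ \cite{AI}), the Gagliardo seminorm of $w_{n,L} := v_n v_{n,L}^{\beta-1}$ is controlled by the duality pairing
$$[w_{n,L}]_{s,p}^p \le C\beta^p \left\langle (-\Delta)_p^s v_n,\, \varphi_L \right\rangle.$$
Using $(G_2)$, the right-hand side is bounded by $C\beta^p \int_{\mathbb{R}^N} (v_n^{p_s^*-p}+1) v_n^p v_{n,L}^{p(\beta-1)} dx$. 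Applying the fractional Sobolev inequality of Lemma \ref{SE} on the left, and splitting the critical integral on the right according to $\{v_n \le K\} \cup \{v_n > K\}$ with $K$ large enough that $\int_{\{v_n > K\}} v_n^{p_s^*}$ is uniformly small (possible because $\{v_n\}$ is bounded in $L^{p_s^*}$), the critical part can be absorbed into the left. Letting $L \to \infty$ by monotone convergence upgrades the integrability from $L^{p\beta}$ to $L^{p_s^* \beta}$, and iterating with $\beta_k = (p_s^*/p)^k$ in the standard Moser scheme gives a uniform bound $|v_n|_\infty \le C$.

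For the decay at infinity, I would exploit $v_n \to v$ in $L^{p_s^*}(\mathbb{R}^N)$: for each $\eta > 0$ there exists $R_0 > 0$ such that $\int_{|x|\ge R_0} v_n^{p_s^*} dx < \eta$ for all large $n$. Combined with the $L^\infty$ bound from the previous step, a Moser iteration localized on the exterior $\{|x| > R\}$ (using cut-off functions equal to $1$ outside $B_R$ and vanishing in $B_{R/2}$) allows the "critical" constant that controls the first step of the iteration to be made arbitrarily small by taking $R$ large. This makes $\sup_{|x|>R} v_n(x)$ arbitrarily small as $R \to \infty$, uniformly in $n$, which is precisely the uniform decay required.

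The main technical obstacle will be managing the nonlocal character of $(-\Delta)_p^s$ in the iteration: the pointwise convex identity used for the local $p$-Laplacian has to be replaced by a two-point algebraic inequality for the truncated quantity $v_n v_{n,L}^{\beta-1}$, with careful control of the mixed error term produced by expanding $|v_n(x) - v_n(y)|^{p-2}(v_n(x)-v_n(y))(\varphi_L(x) - \varphi_L(y))$; the cut-off commutator estimate in the localized iteration (analogous to the one already used in (\ref{*3})) must also be tracked. The Kirchhoff coefficient $M_n$ is not itself a real obstacle, since it is bounded above and below independently of $n$ and thus plays the role of a fixed constant throughout.
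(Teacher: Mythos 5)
Your Moser iteration for the uniform $L^{\infty}$ bound follows essentially the same path as the paper: the same truncated test function $v_n v_{L,n}^{p(\beta-1)}$, the same convexity (Jensen/Franzina--Palatucci) inequality to pass the duality pairing to a Gagliardo seminorm of $v_n v_{L,n}^{\beta-1}$, absorption of the critical integral after splitting on $\{v_n\le K\}\cup\{v_n>K\}$, and the standard bootstrap on the exponents. One wording issue, shared with the paper: to make $\int_{\{v_n>K\}}v_n^{p_s^*}\,dx$ uniformly small you cannot appeal to boundedness of $\{v_n\}$ in $L^{p_s^*}(\mathbb{R}^N)$ alone, since boundedness does not exclude concentration. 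What you actually use is that Lemma \ref{conc} gives $v_n\to v$ strongly in $\mathcal{W}_0$, hence by Lemma \ref{SE} strongly in $L^{p_s^*}(\mathbb{R}^N)$, so $\{v_n^{p_s^*}\}$ is equi-integrable while $|\{v_n>K\}|\to 0$ uniformly by Chebyshev; you should invoke that strong convergence explicitly rather than mere boundedness.

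For the uniform decay your route genuinely diverges from the paper's. You propose a localized iteration on exterior domains, driven by the uniform smallness of $\int_{|x|>R}v_n^{p_s^*}\,dx$ (again from the $L^{p_s^*}$-convergence) together with the global $L^\infty$ bound to control the nonlocal tail; this is a legitimate and self-contained argument, but you will need to track carefully the cutoff commutator (as in the estimate leading to (\ref{*3})) and a tail term of the form $|v_n|_\infty$ multiplied by a factor decaying in $R$, and then iterate over a covering of $\{|x|>R\}$. The paper instead takes a shorter path: it rewrites the equation as $(-\Delta)_p^s v_n=h_n$ with $|h_n|_\infty$ bounded uniformly in $n$ (since $a\le a+b[v_n]_{s,p}^{(\theta-1)p}\le C$ and $|v_n|_\infty\le C$), invokes the global H\"older estimate of Iannizzotto--Mosconi--Squassina \cite[Corollary 5.5]{ASM} to obtain a uniform $\mathcal{C}^{0,\alpha}(\mathbb{R}^N)$ bound for $\{v_n\}$, and then combines uniform equicontinuity with $v_n\to v$ in $\mathcal{W}_0$ to conclude: if $v_n(x_n)\ge\epsilon$ along a sequence $|x_n|\to\infty$, then equicontinuity forces $v_n\ge\epsilon/2$ on a ball of fixed radius around $x_n$, contradicting the uniform vanishing of $\int_{|x|>R}v_n^{p}\,dx$. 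Both arguments are valid; the paper's is shorter once the H\"older regularity theorem is available off the shelf, whereas yours avoids that external dependency at the price of carrying out the exterior iteration in detail.
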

\begin{proof}
For any $L>0$ and $\beta>1$, let us define the function
\begin{equation*}
\gamma(v_n):=\gamma_{L,\beta}(v_n)=v_nv_{L,n}^{p(\beta-1)}\in\mathcal{W}_\varepsilon,
\end{equation*}
where $v_{L,n}=\min\{L,v_n\}$. Since $\gamma$ is an increasing function, we have
\begin{equation*}
(a-b)(\gamma(a)-\gamma(b))\geq0\ \ \text{for}\ \text{any}\ \ a, b\in\mathbb{R}.
\end{equation*}
Let us introduce the following functions
\begin{equation*}
\mathcal{Q}(t):=\frac{|t|^p}{p},\qquad \text{and}\qquad\Gamma(t):=\int_0^t(\gamma'(\tau))^{\frac{1}{p}}d\tau.
\end{equation*}
Then, applying Jensen's inequality we get for all $a,b\in\mathbb{R}$ such that $a>b$
\begin{align*}
\mathcal{Q}'(a-b)(\gamma(a)-\gamma(b))=&(a-b)^{p-1}(\gamma(a)-\gamma(b))=(a-b)^{p-1}\int_a^b\gamma'(t)dt\\
=&(a-b)^{p-1}\int_a^b(\Gamma'(t))^pdt\geq\left(\int_a^b(\Gamma'(t))dt\right)^p.
\end{align*}
The same argument works when $a\leq b$. Therefore
\begin{align}\label{3.7}
\mathcal{Q}'(a-b)(\gamma(a)-\gamma(b))\geq\left|\Gamma(a)-\Gamma(b)\right|^p\ \ \text{for}\ \text{any}\ \ a, b\in\mathbb{R}.
\end{align}
From (\ref{3.7}), we can see that
\begin{align}\label{3.8}
\left|\Gamma(v_n)(x)-\Gamma(v_n)(y)\right|^p\leq|v_n(x)-v_n(y)|^{p-2}(v_n(x)-v_n(y))\left((v_nv_{L,n}^{p(\beta-1)})(x)-(v_nv_{L,n}^{p(\beta-1)})(y)\right).
\end{align}
Choosing $\gamma(v_n)=v_nv_{L,n}^{p(\beta-1)}$ as a test function in (\ref{1.2}) and using (\ref{3.8}) we get
\begin{align}\label{3.9}
&a[\Gamma(v_n)]_{s,p}^p+\int_{\mathbb{R}^N}V_n(x)|v_n|^pv_{L,n}^{p(\beta-1)}dx\nonumber\\
\leq&(a+b[v_n]_{s,p}^{(\theta-1)p})\iint_{\mathbb{R}^{2N}}\frac{{|{{{v}}_n}(x) - {v_n}(y){|^{p - 2}}({{{v}}_n}(x) - {v_n}(y))}}{{|x - y{|^{N + sp}}}}\left( {({v_n}v_{L,n}^{p(\beta  - 1)})(x) - ({v_n}v_{L,n}^{p(\beta  - 1)})(y)} \right)dxdy\nonumber\\
&+\int_{\mathbb{R}^N}V_n(x)|v_n|^pv_{L,n}^{p(\beta-1)}dx\nonumber\\
=&\int_{\mathbb{R}^N}g_n(v_n)v_nv_{L,n}^{p(\beta-1)}dx,
\end{align}
where we use the notations $V_n(x):=V(\varepsilon_n(x+y_n))$ and $g_n(v_n):=g(\varepsilon_n(x+y_n),v_n)$. Observing that
\begin{align*}
\Gamma(v_n)\geq\frac{1}{\beta}v_nv_{L,n}^{\beta-1}
\end{align*}
and by Lemma \ref{SE}, we have
\begin{align}\label{3.10}
[\Gamma(v_n)]_{s,p}^p\geq C_*^{-1}|\Gamma(v_n)|_{p_s^*}^p\geq \left(\frac{1}{\beta}\right)^pC_*^{-1}\|v_nv_{L,n}^{\beta-1}\|_{p_s^*}^p.
\end{align}
On the other hand, by $(G_1)$ and $(G_2)$, for any $\xi>0$, there exists $C_\xi>0$ such that
\begin{align}\label{3.11}
|g_n(v_n)|\leq \xi|v_n|^{p-1}+C_\xi|v_n|^{p_s^*-1}.
\end{align}
Thus, taking $\xi\in (0,V_1)$, using (\ref{3.10}) and (\ref{3.11}), we can see that (\ref{3.9}) yields
\begin{align}\label{3.12}
\|w_{L,n}\|_{p_s^*}^p\leq C\beta^p\int_{\mathbb{R}^N}|v_n|^{p_s^*}v_{L,n}^{p(\beta-1)} dx,
\end{align}
where $w_{L,n}=v_nv_{L,n}^{\beta-1}$ . Now, we take $\beta=\frac{p_s^*}{p}$ and fix $R>0$. Noting that $0\leq v_{L,n}\leq v_n$, we can infer that
\begin{align}\label{3.13}
\int_{\mathbb{R}^N}|v_n|^{p_s^*}v_{L,n}^{p(\beta-1)} dx=&\int_{\mathbb{R}^N}|v_n|^{p_s^*-p}|v_n|^{p}v_{L,n}^{p_s^*-p} dx\nonumber\\
=&\int_{\mathbb{R}^N}|v_n|^{p_s^*-p}(v_nv_{L,n}^{\frac{p_s^*-p}{p}})^p dx\nonumber\\
\leq&\int_{v_n<R}R^{p_s^*-p}v_n^{p_s^*}dx+\int_{v_n>R}|v_n|^{p_s^*-p}(v_nv_{L,n}^{\frac{p_s^*-p}{p}})^p dx\nonumber\\
\leq&\int_{v_n<R}R^{p_s^*-p}v_n^{p_s^*}dx\nonumber\\
&+\left(\int_{v_n>R}|v_n|^{p_s^*}dx\right)^{{\frac{p_s^*-p}{p_s^*}}}\left(\int_{v_n>R}(v_nv_{L,n}^{\frac{p_s^*-p}{p}})^{p_s^*}\right)^{\frac{p}{p_s^*}}.
\end{align}
Since $\{v_n\}$ is bounded in $L^{p_s^*}(\mathbb{R}^N)$, we can see that for any $R$ sufficiently large
\begin{align}\label{3.14}
\left(\int_{v_n>R}|v_n|^{p_s^*}dx\right)^{{\frac{p_s^*-p}{p_s^*}}}\leq \frac{1}{2C\beta^p}
\end{align}
putting together (\ref{3.12}), (\ref{3.13}) and (\ref{3.14}), we get
\begin{align*}
\left(\int_{v_n>R}(v_nv_{L,n}^{\frac{p_s^*-p}{p}})^{p_s^*}\right)^{\frac{p}{p_s^*}}\leq C\beta^pR^{p_s^*-p}\int_{\mathbb{R}^N}v_n^{p_s^*}dx<\infty,
\end{align*}
and taking the limiting as $L\to \infty$, we obtain $v_n\in L^{\frac{(p_s^*)^2}{p}}({\mathbb{R}^N})$.\

Now, noting that $0\leq v_{L,n}\leq v_n$ and letting $L\to\infty$ in (\ref{3.12}) we have $|v_n|_{\beta p_s^*}^{\beta p}\leq C\beta^p\int_{\mathbb{R}^N}v_n^{p_s^*+p(\beta-1)}dx$, from which we deduce that
\begin{align*}
\left(\int_{\mathbb{R}^N} v_n^{\beta p_s^*}dx\right)^{\frac{1}{p_s^*(\beta-1)}}\leq C\beta^{\frac{1}{\beta-1}}\left(\int_{\mathbb{R}^N}v_n^{p_s^*+p(\beta-1)}dx\right)^{\frac{1}{p(\beta-1)}}.
\end{align*}
From $m\geq1$, we define $\beta_{m+1}$ inductively so that $p_s^*+p(\beta_{m+1}-1)=p_s^*\beta_m$ and $\beta_1=\frac{p_s^*}{p}$. Then, we have
\begin{align*}
\left(\int_{\mathbb{R}^N} v_n^{\beta_{m+1} p_s^*}dx\right)^{\frac{1}{p_s^*(\beta_{m+1}-1)}}\leq C\beta_{m+1}^{\frac{1}{\beta_{m+1}-1}}\left(\int_{\mathbb{R}^N}v_n^{p_s^*\beta_m}dx\right)^{\frac{1}{p_s^*(\beta_m-1)}}.
\end{align*}
Set $D_m:=\left(\int_{\mathbb{R}^N}v_n^{p_s^*\beta_m}dx\right)^{\frac{1}{p_s^*(\beta_m-1)}}$. Using a standard iteration argument, we can find $C_0>0$ independent of $m$ such that
\begin{align*}
{D_{m + 1}} \le \prod\limits_{k = 1}^m {{{(C{\beta _{k + 1}})}^{\frac{1}{{{\beta _{k + 1}} - 1}}}}} {D_1} \le {C_0}{D_1}.
\end{align*}
Passing to the limit as $m\to\infty$ we get $|v_n|_\infty\leq K$ for all $n\in \mathbb{N}$.\

Now, we note that $v_n$ is solution to
\begin{align*}
(-\Delta)_p^sv_n=\left[g(\varepsilon_n(x+y_n),v_n)-V(\varepsilon_n(x+y_n))v_n^{p-1}\right](a+b[v_n]_{s,p}^{(\theta-1)p})^{-1}=:h_n\ \ \rm{in}\ \ \mathbb{R}^N.
\end{align*}
Moreover, $h_n\in L^\infty(\mathbb{R}^N)$ and $|v_n|_\infty\leq C$ for all $n\in\mathbb{N}$. Indeed, this last inequality is a consequence of the growth assumption on $g$. Then, $|v_n|_\infty\leq C$ and $a\leq a+b[v_n]_{s,p}^{(\theta-1)p}\leq C$ for all $n\in \mathbb{N}$. Then using Corollary $5.5$ in \cite{ASM}, we can deduce that $v_n\in \mathcal{C}^{0,\alpha}(\mathbb{R}^N)$ for some $\alpha>0$. Since $v_n\to v$ in $\mathcal{W}_0$ (see Lemma \ref{conc}), we can infer that
\begin{equation*}
\mathop {\lim }\limits_{|x| \to \infty } {v_n}(x) = 0\ \ \text{uniformly}\ \text{in}\ n\in\mathbb{N}.
\end{equation*}
\end{proof}
Now, we give the proof of Theorem \ref{mainthx}.\

\noindent{\bf Proof of Theorem \ref{mainthx}}.
We begin by proving that there exists $\varepsilon_0>0$ such that for any $\varepsilon\in (0,\varepsilon_0)$ and any mountain pass solution $u_\varepsilon\in \mathcal{W}_\varepsilon$ of (\ref{1.2}), it holds
\begin{align}\label{3.15}
|u_\varepsilon|_{L^\infty(\mathbb{R}^N)\setminus {\Lambda_\varepsilon}}<a.
\end{align}
Assume by contradiction that for some subsequence $\{\varepsilon_n\}$ such that $\varepsilon_n\to 0$, we find $u_n:=u_{\varepsilon_n}\in \mathcal{W}_{\varepsilon_n}$ such that $J_{\varepsilon_n}(u_n)=c_{\varepsilon_n}$, $J'_{\varepsilon_n}(u_n)=0$ and
\begin{align}\label{3.16}
|u_\varepsilon|_{L^\infty({\mathbb{R}^N}\setminus {\Lambda_{\varepsilon_n}})}\geq a.
\end{align}
From Lemma \ref{conc}, there exists $\{y_n\}\subset \mathbb{R}^N$ such that $v_n=u_n(\cdot +y_n)\to v$ in $\mathcal{W}_0$ and $\varepsilon_n y_n\to y_0$ for $y_0\in\Lambda$ such that $V(y_0)=V_0$.\

Now, if we choose $r>0$ such that $B_r(y_0)\subset B_{2r}(y_0)\subset\Lambda$, we can see that $B_{\frac{r}{\varepsilon_n}}\left(\frac{y_0}{\varepsilon_n}\right)\subset\Lambda_{\varepsilon_n}$. Then, for any $y\in B_{\frac{r}{\varepsilon_n}}(y_n)$, it holds $|y-\frac{y_0}{\varepsilon_n}|\leq |y-y_n|+|y_n-\frac{y_0}{\varepsilon_n}|<\frac{1}{\varepsilon_n}(r+o_n(1))<\frac{2r}{\varepsilon_n}$ for $n$ sufficiently large. Hence, for there values of $n$ we have
\begin{align}\label{3.17}
{\mathbb{R}^N}\setminus {\Lambda_{\varepsilon_n}}\subset{\mathbb{R}^N}\setminus {B_{\frac{r}{\varepsilon_n}}(y_n)}
\end{align}
for any $n$ big enough. Using Lemma \ref{estim}, we can see that
\begin{align}\label{3.18}
v_n(x)\to 0\ \ \mbox{as}\ \ |x|\to \infty\ \ \mbox{uniformly}\ \mbox{in}\ n\in\mathbb{N}.
\end{align}
Therefore, there exists $R>0$ such that $v_n(x)<a$ for $|x|\geq R$ $\forall n\in \mathbb{N}$, from which $u_n(x)=v_n(x-y_n)<a$ for $x\in \mathbb{R}^N\setminus B_R(y_n)$ and $\forall n\in\mathbb{N}$. On the other hand, by (\ref{3.17}), there exists $\nu\in\mathbb{N}$ such that for any $n\geq\nu$, it holds ${\mathbb{R}^N}\setminus {\Lambda_{\varepsilon_n}}\subset{\mathbb{R}^N}\setminus {B_{\frac{r}{\varepsilon_n}}(y_n)}\subset\mathbb{R}^N\setminus B_R(y_n)$, which gives $u_n(x)<a$, $\forall x\in{\mathbb{R}^N}\setminus {\Lambda_{\varepsilon_n}}$. This last fact contradicts (\ref{3.16}) and thus (\ref{3.15}) is verified.\

Now, let $u_\varepsilon$ be a nonnegative solution to (\ref{1.2}), since $u_\varepsilon$ satisfies (\ref{3.15}) for any $\varepsilon\in (0,\varepsilon_0)$, it follows from the definition $g$ that $u_\varepsilon$ is a solution to (\ref{1.2}), and $v_\varepsilon=u_\varepsilon(x/\varepsilon)$ is a solution to (\ref{1.1}) for any $\varepsilon\in (0,\varepsilon_0)$.\

Finally, we study the behavior of maximum points of solutions to problem (\ref{1.2}). Taking $\varepsilon_n\to 0^+$ and consider a sequence $\{u_{\varepsilon_n}\}\subset\mathcal{W}_{\varepsilon_n}$ of solution to $(\ref{1.2})$. We first notice that, by $(G_1)$, there exists $\gamma\in (0,a)$ such that
\begin{align}\label{3.19}
g(\varepsilon_n x,t)t=f(t)t+t^{p_s^*}\leq\frac{V_1}{K}t^p\ \ \text{for}\ \text{any}\ \ x\in {\mathbb{R}^N},\ \ 0\leq t\leq\gamma.
\end{align}
Arguing as before, we can find $R>0$
\begin{align}\label{3.20}
|u_{\varepsilon_n}|_{L^\infty({\mathbb{R}^N}\setminus{B_R(y_n)})}<\gamma.
\end{align}
Moreover, up to subsequence, we may assume that
\begin{align}\label{3.21}
|u_{\varepsilon_n}|_{L^\infty({B_R(y_n)})}\geq\gamma.
\end{align}
Indeed, if (\ref{3.21}) does not hold, in view of (\ref{3.20}) we can see that $|u_{\varepsilon_n}|_\infty<\gamma$. Then, from $\left\langle {{J'_\varepsilon }({u_{\varepsilon_n}}),{u_{\varepsilon_n}}} \right\rangle=o_n(1)$ and (\ref{3.19})
\begin{equation*}
\|u_{\varepsilon_n}\|_{\varepsilon_n}^p\leq \|u_{\varepsilon_n}\|_{\varepsilon_n}^p+b[u_{\varepsilon_n}]_{s,p}^{\theta p}=\int_{\mathbb{R}^N}g(\varepsilon_n x,u_{\varepsilon_n})u_{\varepsilon_n} dx\leq \frac{V_1}{K}\int_{\mathbb{R}^N}u_{\varepsilon_n}^p,
\end{equation*}
which yields $\|u_{\varepsilon_n}\|_{\varepsilon_n}=0$, and this is a contradiction. Hence, (\ref{3.21}) holds true. In the light of (\ref{3.20}) and (\ref{3.21}), we can deduce that the maximum points $p_n\in {\mathbb{R}^N}$ of $u_{\varepsilon_n}$ belongs to $B_R(y_n)$. Thus, $p_n=y_n+q_n$ for some $q_n\in B_R$. Consequently, $\eta_{\varepsilon_n}={\varepsilon_n} y_n+\varepsilon_n q_n$ is the maximum point of $v_{\varepsilon_n}=u_{\varepsilon_n}(x/{\varepsilon_n})$, we conclude that the maximum point $\eta_{\varepsilon_n}$ of $v_{\varepsilon_n}$ is given by $\eta_{\varepsilon_n}:=\varepsilon_n y_n+\varepsilon_n q_n$. Since $\{q_n\}\subset B_R$ is bounded and $\varepsilon_n y_n\to y_0$ with $V(y_0)=V_0$, from the continuity of $V$, we can deduce that $\mathop {\lim }\limits_{n \to \infty }V(\eta_{\varepsilon_n})=V(y_0)=V_0$.\

Next, we give a decay estimate for $v_{\varepsilon_n}$. For this purpose, using Lemma $7.1$ in \cite{PQ}, we can find a continuous positive function $w$ and a constant $C>0$ such that for large $|x|>R_0$ it holds that
\begin{align}\label{3.22}
0<w(x)\leq\frac{C}{1+|x|^{N+sp}}
\end{align}
and
\begin{align}\label{3.23}
(-\Delta)_p^s w+\frac{V_1}{2(a+bA_1^{(\theta-1)p})}w^{p-1}\geq 0\ \ \rm{in}\ \ \mathbb{R}^N\setminus B_{R_1},
\end{align}
for some suitable $R_1>0$, and $A_1>0$ is such that $a+b[v_n]_{s,p}^{(\theta-1)p}\leq a+bA_1^{(\theta-1)p}$ for all $n\in \mathbb{N}$. Using $(G_1)$ and (\ref{3.18}), we can find $R_2>0$ sufficiently large such that
\begin{align}\label{3.24}
(-\Delta)_p^s {v_n}+\frac{V_1}{2(a+bA_1^{(\theta-1)p})}{v_n}^{p-1}\leq& (-\Delta)_p^s {v_n}+\frac{V_1}{2(a+b[v_n]_{s,p}^{(\theta-1)p})}{v_n}^{p-1}\nonumber\\
=&\frac{1}{2(a+b[v_n]_{s,p}^{(\theta-1)p})}\left[g(\varepsilon_n x+\varepsilon_n y_n,v_n)-(V(\varepsilon_n x+\varepsilon_n y_n)-\frac{V_1}{2}){v_n}^{p-1}\right]\nonumber\\
\leq&\frac{1}{2(a+b[v_n]_{s,p}^{(\theta-1)p})}\left[g(\varepsilon_n x+\varepsilon_n y_n,v_n)--\frac{V_1}{2}{v_n}^{p-1}\right]\nonumber\\
\leq&0,\ \ \mbox{in}\ \ \mathbb{R}^N\setminus {B_{R_2}}.
\end{align}
Thanks to the continuity of $v_n$ and $w$. there exists $C_1>0$ such that
\begin{equation*}
\psi_n:=v_n-C_1w\leq0 \ \ \mbox{for}\ \ |x|=R_3,
\end{equation*}
where $R_3:=\max\{R_1, R_2\}$ . \

Taking $\phi_n=\max\{\psi_n, 0\}\in W^{s,p}(B_R^c)$ as a test function in (\ref{3.24}) and using (\ref{3.23}) with $\tilde w=C_1w$, we can deduce that
\begin{align}\label{3.25}
0\geq&\iint_{\mathbb{R}^{2N}}\frac{{|{v_n}(x) - {v_n}(y){|^{p - 2}}({v_n}(x) - {v_n}(y))({\phi _n}(x) - {\phi _n}(y))}}{{|x - y{|^{N + sp}}}} dxdy\nonumber\\
&+\frac{V_1}{2(a+bA_1^{(\theta-1)p})}\int_{\mathbb{R}^N} v_n^{p-1}\phi_n dx\nonumber\\
\geq& \iint_{\mathbb{R}^{2N}}\frac{\mathcal{G}_n(x,y)}{{|x - y{|^{N + sp}}}}({\phi _n}(x) - {\phi _n}(y))dxdy+\frac{V_1}{2(a+bA_1^{(\theta-1)p})}\int_{\mathbb{R}^N}[v_n^{p-1}-{{\tilde w}^{p - 1}}]\phi dx,
\end{align}
where ${\mathcal{G}_n(x,y)}:={|{v_n}(x) - {v_n}(y){|^{p - 2}}({v_n}(x) - {\tilde w}(y))}-{|{\tilde w}(x) - {\tilde w}(y){|^{p - 2}}({\tilde w}(x) - {\tilde w}(y))}$. Therefore, if we prove that
\begin{align}\label{3.26}
\iint_{\mathbb{R}^{2N}}\frac{\mathcal{G}_n(x,y)}{{|x - y{|^{N + sp}}}}({\phi _n}(x) - {\phi _n}(y))dxdy\geq0,
\end{align}
it follows from (\ref{3.25}) that
\begin{equation*}
0\geq\frac{V_1}{2(a+bA_1^{(\theta-1)p})}\int_{v_n\geq {\tilde w}}[v_n^{p-1}-{\tilde w}^{p-1}](v_n-{\tilde w})dx\geq0,
\end{equation*}
which implies that
\begin{equation*}
\{x\in \mathbb{R}^N:|x|\geq R_3\ \ and\ \ v_n\geq {\tilde w}\}=\emptyset.
\end{equation*}
To obtain our purpose, we first note that for all $s,d\in\mathbb{R}$ it holds
\begin{equation*}
|d|^{p-2}d-|c|^{p-2}c=(p-1)(d-c)\int_0^1|c+t(d-c)|^{p-2} dt.
\end{equation*}
Taking $d=v_n(x)-v_n(y)$ and $c={\tilde w}(x)-{\tilde w}(y)$ we can see that
\begin{equation*}
|d|^{p-2}d-|c|^{p-2}c=(p-1)(d-c)I(x,y),
\end{equation*}
where $I(x,y)\geq 0$ stands for the integral. Now, recalling that
\begin{equation*}
(x-y)(x^+-y^+)\geq |x^+-y^+|^2\ \ \mbox{for}\ \mbox{all}\ \ x,y\in \mathbb{R},
\end{equation*}
one has
\begin{align*}
(d-c)(\phi_n(x)-\phi_n(y))=&[(v_n-{\tilde w})(x)-(v_n-{\tilde w})(y)][(v_n-{\tilde w})^+(x)-(v_n-{\tilde w})^+(y)]\\
\geq& |(v_n-{\tilde w})^+(x)-(v_n-{\tilde w})^+(y)|^2,
\end{align*}
which gives $(|d|^{p-2}d-|c|^{p-2}c)(\phi_n(x)-\phi_n(y))\geq 0$, that is (\ref{3.26}) holds.\

Therefore, $\psi_n \leq0$ in $B_{R_3}^c$, which implies that $v_n\leq C_1w$ in $B_{R_3}^c$, that is $v_n(x)\leq C\frac{1}{1+|x|^{N+sp}}$ in $B_{R_3}^c$. Therefore, we have
\begin{align*}
v_{\varepsilon_n}(x)=&{u_{\varepsilon_n}}\left(\frac{x}{\varepsilon_n}\right)=v_n\left(\frac{x}{\varepsilon_n}-y_n\right)\leq C_1w\left(\frac{x}{\varepsilon_n}-y_n\right)\\
\leq& \frac{C}{1+|\frac{x}{\varepsilon_n}-y_n|^{N+sp}}\\
=&\frac{C{\varepsilon_n}^{N+sp}}{{\varepsilon_n}^{N+sp}+|x-{\varepsilon_n}y_n|^{N+sp}}\\
\leq&\frac{C{\varepsilon_n}^{N+sp}}{{\varepsilon_n}^{N+sp}+|x-\eta_{\varepsilon_n}|^{N+sp}}.
\end{align*}
The proof is complete.
\qed

\bigskip

\noindent{\bfseries Acknowledgements:}

The research bas been supported by National Natural Science Foundation of China 11971392,  Natural Science Foundation of Chongqing, China cstc2021ycjh-bgzxm0115, Fundamental Research Funds for the Central Universities XDJK2020B047, and the Chongqing Graduate Student Research Innovation Project CYS21098.

\enddocument